\newtheorem{theo}{Theorem}[section]
\newtheorem{lem}[theo]{Lemma}
\newtheorem{prop}[theo]{Proposition}
\newtheorem{cor}[theo]{Corollary}
\newtheorem{df}[theo]{Definition}
\newtheorem{rmk}[theo]{Remark}
\newtheorem{cl}[theo]{Claim}
\title{Regularity of four dimensional Willmore-type hypersurfaces }
\newcommand\norm[1]{\left\lVert#1\right\rVert}
\date{}
\author{Yann Bernard\footnote{School of Mathematics, Monash University, 3800 Victoria, Australia. Email address: yann.bernard@monash.edu}\quad and \, Peter Olamide Olanipekun\footnote{School of Mathematics, Monash University, 3800 Victoria, Australia. 
\hspace{3mm}Email address: olanipekunp@gmail.com}}
\begin{document}\sloppy

\maketitle
\abstract{A four dimensional conformally invariant energy is studied. This energy generalises the well known two-dimensional Willmore energy. Although not positive definite, it includes minimal hypersurfaces as critical points. We compute its first variation  and by applying Noether’s theorem to the invariances, we derive some conservation laws which are satisfied by its critical points and with good analytical dispositions. In particular, we show that its critical points are smooth. We also investigate other possible four dimensional generalisations of the Willmore energy, and give strong evidence that critical points of such energies do not include minimal hypersurfaces.}

\tableofcontents

\section{Introduction}

Let $\vec{\Phi}$ be an immersion of the surface $\Sigma$ into the Euclidean space $\mathbb{R}^m$. Let $g$ and $\vec{h}$  denote the first and second fundamental form on $\Sigma$ respectively.
Let $\vec{H}:= \frac{1}{2}\textnormal{Tr}_g\vec{h}$ and $\vec{h}_0:=\vec{h}-\vec{H} g$ \,\,be the mean curvature vector and the trace-free second fundamental form respectively. Denote by $\chi(\Sigma)$ the Euler characteristic of the surface $\Sigma$. Owing to the Gauss-Bonnet theorem, the Willmore energy can be written as
\begin{align}
\int_\Sigma|\vec{H}|^2 d\textnormal{vol}_g = \frac{1}{4}\int_\Sigma|\vec{h}|^2 d\textnormal{vol}_g +\pi\chi(\Sigma) =\frac{1}{2}\int_\Sigma|\vec{h}_0|^2 d\textnormal{vol}_g +2\pi\chi(\Sigma)  \label{apps}
\end{align}
    so that the three energies appearing in \eqref{apps} have the same critical points. Although the energy $\int_\Sigma|\vec{h}_0|^2 d\textnormal{vol}_g$ is conformally invariant, the other two energies in \eqref{apps} are not.
\noindent
 Willmore surfaces are the critical points of the Willmore energy and are found by varying the energy. They satisfy the following Euler-Lagrange equation which is known in literature as the Willmore equation: 
\begin{align}
\mathcal{\vec{W}}:=\Delta_\perp\vec{H} +(\vec{H}\cdot\vec{h}_{ij})\vec{h}^{ij} -2|\vec{H}|^2\vec{H}=\vec{0} \label{eqya}
\end{align}
where $\Delta_\perp$ is the negative covariant Laplacian for the Levi-Civita connection in the normal bundle. We will refer to the left hand side of \eqref{eqya} as the {\it Willmore invariant}. In codimension one, the above equation reduces to
\begin{align}
\Delta_g H+|h_0|^2 H=0  \label{eqya1}
\end{align}
where $\Delta_g$ is the negative Laplace-Beltrami operator on $\Sigma$.  Since the mean curvature depends on two derivatives of the immersion, the Willmore equation \eqref{eqya1} (or more generally \eqref{eqya}) is  a fourth order elliptic nonlinear partial differential equation. The Willmore energy has been widely studied in literature with recent efforts geared towards the analysis of Willmore surfaces under different conditions: in codimension one \cite{cdd,stru}, higher codimension \cite{noetherpaper}, in a fixed conformal class \cite{boh, sc} and under other frameworks \cite{yber2, ybr, yber1,  br, bry, riv} just to mention a few. Of importance is the Willmore conjecture (\cite{willmoretom}, see also \cite{lipeter}) which was proved in \cite{fcmarqu} by Fernando Cod\'a Marques and Andr\'e Neves. 

The main objects of this paper are the four dimensional Willmore energy and its critical points. We highlight  the following interesting properties about the Willmore energy which will guide our discussion on four dimensional generalizations of the Willmore energy.
\begin{enumerate}[(a).] \label{ajaka}
\item\label{ajaka1} The Willmore energy is conformally invariant up to Gauss-Bonnet terms.
\item \label{ajaku} The leading order operator in the Willmore equation is linear.
\item \label{ajaki} Minimal surfaces are critical points of the Willmore energy.
\item \label{ajaka2} The Willmore energy is non-negative.
\end{enumerate}

\subsection{A four dimensional  Willmore-type energy}
Generalising the Willmore energy to higher dimensions has been a question of interest for a long time. It has been often asked what properties or conditions must be satisfied by an energy that would qualify as a generalised higher dimensional version of the Willmore energy. 
This essentially leads us to ask if there is a four dimensional Willmore-type energy preserving the properties \eqref{ajaka1}-\eqref{ajaka2} above.

\noindent
By considering an energy involving a linear combination of the quantities
$$(\vec{h}_0)_{ij}\cdot (\vec{h}_0)^{jk}(\vec{h}_0)_{kl}\cdot (\vec{h}_0)^{li}\,, \quad (\vec{h}_0)_{ij}\cdot (\vec{h}_0)^{ij}(\vec{h}_0)_{kl}\cdot (\vec{h}_0)^{kl} \quad\mbox{and}\, ((\vec{h}_0)_{ij}\cdot (\vec{h}_0)^{jk}(\vec{h}_0)_{k}^i)^{4/3}$$
one can easily construct a four dimensional conformally invariant energy. However, preliminary computations show that such energies do not preserve the properties \eqref{ajaku} and \eqref{ajaki} above (see for instance those considered in \cite{rigoli} and in Section 5 of \cite{mondino}). Thus a different approach, which we shall now describe, is needed.

\vskip 3mm
\noindent
There is a relationship between the problem of finding a suitable higher dimensional generalisation of the Willmore energy and the singular Yamabe problem.  Let $(M^n, \bar g)$ be a smooth compact Riemannian manifold of dimension $n$ with boundary and let $\Sigma$ be an hypersurface, the singular Yamabe problem asks for a defining function $u$  for the boundary $\Sigma:=\partial M$ such that on the interior $ \mathring{M}$, the scalar curvature $R_g$ of the metric $g=u^{-2}\bar g$ satisfies   $R_g=-n(n-1)$.   
This problem was first considered by Loewner and Nirenberg in \cite{loewner}. Later, 
Andersson, Chr\'usciel and Friedrich \cite{acf} computed a conformal invariant which obstructs the smoothness of the function $u$. This obstruction to smooth boundary asymptotics for the Yamabe solution  was identified by Gover et.\ al as a Willmore surface invariant in $\mathbb{R}^3$ (see \cite{{GGHW}, {GoW2}, {GoW3}}). This identification led to the perception that higher dimensional generalisation of the Willmore invariant can be found  via obstructions to smooth boundary asymptotics of the singular Yamabe problem on conformally compact manifolds of higher dimension. They further asked if there is a corresponding energy for such invariants, that is, is there a higher dimensional generalisation of the Willmore energy whose Euler-Lagrange equation corresponds to the identified obstruction? It turns out that the answer is positive (see \cite{{govie}, {gragra}}). Moreover, the following four dimensional generalisation of the Willmore energy was identified
\begin{align}
\mathcal{E}(\Sigma):=\int_{\Sigma} (|\pi_{\vec{n}}\nabla\vec{H}|^2- |\vec{H}\cdot\vec{h}|^2 +7|\vec{H}|^4) \,\,\,d\textnormal{vol}_g  \label{jkas}
\end{align}
where notations have been slightly adjusted as follows: $\vec{H}:= \frac{1}{4}g^{ij}\vec{h}_{ij}=\frac{1}{4}\vec{h}^j_j$, \,\,\, $|\pi_{\vec{n}}\nabla\vec{H}|^2:=\pi_{\vec{n}}\nabla_i\vec{H}\cdot\pi_{\vec{n}}\nabla^i\vec{H}$, where $\pi_{\vec{n}}\nabla\vec{H} $ is the projection of the Levi-Civita connection, acting on $\vec{H}$, onto the normal bundle, \,\,\, $|\vec{H}\cdot\vec{h}|^2:= (\vec{H}\cdot\vec{h}_{ij})(\vec{H}\cdot\vec{h}^{ij})$ and $|\vec{H}|^4:= (\vec{H}\cdot\vec{H})^2$. This energy was first discussed  in codimension one\footnote{In codimension one, the energy $\mathcal{E}(\Sigma)$ reduces to $\int_{\Sigma} (|\nabla H|-|H|^2|h|^2+7|H|^4)\,\,\,d\textnormal{vol}_g$.} by Guven in \cite{jguv} and rediscovered by Robin-Graham and Reichert in \cite{robingraham}, and separately by Zhang in \cite{zhang}. The energy \eqref{jkas} satisfies the conditions \eqref{ajaka1}-\eqref{ajaki} specified above; infact, it satisfies \eqref{ajaka1} without Gauss-Bonnet. However, as noted in \cite{robingraham}, it is not bounded below, thus violating condition \eqref{ajaka2}. It is possible to construct a non-negative energy by adding, to the energy \eqref{jkas}, a multiple of the fourth power of the norm of the trace-free second fundamental form. In such situation, one considers the energy:
\begin{align}
\mathcal{E}^1(\Sigma):= \mathcal{E}(\Sigma)+\mu\int_\Sigma    |h_0|^4\,\,d\textnormal{vol}_g  \label{generalpo}
\end{align}
for some  $\mu \geq\frac{1}{12}$. But one discovers that the energy $\mathcal{E}^1(\Sigma)$ seems to violate the very crucial condition \eqref{ajaki} (see Appendix \ref{othergen}). Thus,  this paper is focused on studying the critical points (of which many examples are known) of the energy \eqref{jkas}.

\subsection{ Regularity Proof Strategy}

\subsubsection{General literature on the strategy in two dimensions}
The two dimensional Willmore energy is finite as soon as  the Willmore immersion $\vec{\Phi}:\Sigma \rightarrow \mathbb{R}^m$ belongs to the Sobolev space $W^{1,\infty}\cap W^{2,2}$.  This implies that $\vec{h}\in L^2$.  
 This fact, when incorporated  into the Willmore equation, gives no analytical meaning as the cubic curvature terms $(\vec{H}\cdot\vec{h})\vec{h}-2|\vec{H}|^2\vec{H}$ (or more importantly $\Delta_\perp\vec{H}$ as per \eqref{eqya}) would not even belong to $L^1$. However, this issue can be resolved with the help of conservation laws.
In the seminal article \cite{riv}, the author found such conservation laws. It was shown that the fourth-order Willmore equation, associated with the two-dimensional Willmore energy, can be expressed in divergence forms with several analytical implications. These divergence forms were elegantly obtained, in \cite{noetherpaper}, by applying Noether's theorem (a general result applicable in any dimension) to the invariances of the Willmore energy and several other pre-existing results were rediscovered. Indeed, there exist some  1-forms $\vec{V}_{\textnormal{tra}}$, ${V}_{\textnormal{dil}}$ and $\vec{V}_{\textnormal{rot}}$ such that

\noindent
\begin{align}
\vec{\mathcal{W}}=d^\star \vec{V}_{\textnormal{tra}}=\vec{0}\,\,,\quad d^\star {V}_{\textnormal{dil}}={0}\,\,,\quad d^\star \vec{V}_{\textnormal{rot}}=\vec{0}   \label{aaq12} \end{align}
where $\vec{\mathcal{W}}$ is the Willmore invariant. The quantity $\vec{V}_{\textnormal{tra}}$ is  geometric since it depends on $\vec{\mathcal{W}}$. The quantities ${V}_{\textnormal{dil}}$ and $\vec{V}_{\textnormal{rot}}$ are also geometric since they depend on $\vec{V}_{\textnormal{tra}}$.
\noindent
Applying Poincar\'e lemma to \eqref{aaq12}, one finds the primitives $\vec{L}$, $S$  and $\vec{R}$ satisfying
\[   \left\{
\begin{array}{ll}
      d^\star \vec{L}=\vec{V}_{\textnormal{tra}} \,\,,\quad d^\star {S}={V}_{\textnormal{dil}}\,\,,&d^\star\vec{R}= \vec{V}_{\textnormal{rot}} \\
      d \vec{L}=\vec{0} \,\,,\quad\quad\quad d {S}={0}\,\,,&d\vec{R}= \vec{0} \\
\end{array} 
\right. \]
where the Hodge star $\star$, the exterior differential $d$ and codifferential $d^\star$ are understood in terms of the induced metric $g$. 
Further computations \cite{noetherpaper} yield the following system\footnote{The geometric product $\bullet$ is defined in the next Section 
.} 
of second-order partial differential equations:
\begin{align}  \left\{
\begin{array}{ll}
      |g|^{1/2}\Delta_g (\star S)&=\star\big(d(\star\vec{n})\cdot d(\star\vec{R})\big) \\
      |g|^{1/2}\Delta_g(\star\vec{R})&=\star\big(d(\star\vec{n})\bullet d(\star\vec{R})\big) +\star\big(d(\star\vec{n})\,\, d(\star{S})\big)  \\
|g|^{1/2}\Delta_g\vec{\Phi}&=\star\big(d(\star\vec{R})\bullet d\vec{\Phi}\big) +\star\big(d(\star{S})\,\, d\vec{\Phi}\big)  \\
\end{array} 
\right. \label{sisio5} \end{align}
where $\vec{n}$ denotes the Gauss map on $\Sigma$. The system \eqref{sisio5} possesses a Jacobian structure suitable for analysis \cite{{ybr}, {noetherpaper},  {palais}, {riv}}.

\noindent
The question of regularity of the two dimensional Willmore surface is  already settled in literature. For instance, it was proved in \cite{palais} and \cite{riv} that  Willmore immersions associated with the two dimensional Willmore energy are smooth. The proof techniques heavily rely on the structure of the system \eqref{sisio5}. The regularity assumptions on $\vec{\Phi}$ imply that $\vec{n}$, $S$ and $\vec{R}$ belong to the Sobolev space $W^{1,2}.$ Thus it follows that $\Delta_g (\star S)$, $\Delta_g(\star\vec{R})$ and $\Delta_g\vec{\Phi}$ all belong to $L^1$; this information  is critical for regularity purpose. However, the Jacobian structure of system \eqref{sisio5} encodes an extra regularity. By using Wente estimates (see Chapter 3 of \cite{ helein}, \cite{palais}), one improves the regularity of  $(\star S)$ and $(\star\vec{R})$ by showing that they both belong to $W^{1,p}$ for some $p>2$.
The third equation in \eqref{sisio5} links the primitives $(\star S)$ and $(\star\vec{R})$ back to the Willmore immersion $\vec{\Phi}$. In fact, it follows that $\vec{\Phi}$ belongs to $W^{2,p}$. By applying a bootstrapping argument, one concludes that  $\vec{\Phi}$ is smooth.

 These formulations and results lead us to ask if analogous results can be obtained in four dimensions. Firstly, what conservation laws are satisfied by the critical points of the energy $\mathcal{E}(\Sigma)$?
Understanding  Willmore-type equations  in higher dimensions, from analytical perspective, can be generally a difficult problem. 
For instance, the Willmore-type equation associated with \eqref{jkas} is a sixth-order nonlinear partial differential equation, the study of which is rare in literature. 
Secondly,  one asks if the regularity proof procedure in two dimensions can be applied in four dimensions.  These questions are clearly answered in this paper. We develop a regularity proof for the critical points of the energy $\mathcal{E}(\Sigma)$.

\subsubsection{Difficulties of this paper and our proof strategy}
We begin with the statement of our main result.
\begin{theo}\label{heart}
Let $\vec{\Phi}\in W^{3,2}\cap W^{1,\infty}$ be a non-degenerate critical point of the energy
\begin{gather}
\mathcal{E}(\Sigma):= \int_\Sigma \left(|\pi_{\vec{n}}\nabla \vec{H}|^2-|\vec{H}\cdot\vec{h}|^2 +7|\vec{H}|^2    \right) d\textnormal{vol}_g.  \label{sjda}
\end{gather}
Then $\vec{\Phi}$ is smooth with the estimate
\begin{align}
\norm{DH}_{L^\infty(B_r)} +\frac{1}{r} \norm{H}_{L^\infty(B_r)}  \lesssim \frac{1}{r^2} \left(\norm{DH}_{L^2(B_1)} +\norm{H}_{L^4(B_1)}   \right) \quad\quad\forall \, r<1   \nonumber
\end{align}
where $B_r\subset\mathbb{R}^4$ is any ball of radius $r$.
\end{theo}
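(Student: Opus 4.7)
The plan is to adapt the two-dimensional Noether strategy recalled above to the sixth-order Euler--Lagrange equation associated with $\mathcal{E}(\Sigma)$. First I would compute the first variation of $\mathcal{E}$, obtaining a sixth-order Willmore-type invariant $\vec{\mathcal{W}}^{(4)}$ with leading term $\Delta_\perp^2 \vec{H}$ plus lower-order curvature polynomials. I would then apply Noether's theorem to the three infinitesimal invariances: translations, dilations, and rotations. Since $\mathcal{E}$ is conformally invariant without any Gauss--Bonnet term, inversions can be added but should give no extra geometric information once the other three are exploited. This should produce conservation laws of the form
\[
d^\star \vec{V}_{\text{tra}} = \vec{\mathcal{W}}^{(4)} = \vec{0}, \qquad d^\star V_{\text{dil}} = 0, \qquad d^\star \vec{V}_{\text{rot}} = \vec{0},
\]
with $\vec{V}_{\text{tra}}, V_{\text{dil}}, \vec{V}_{\text{rot}}$ depending polynomially on $\vec{\Phi}$, $\vec{h}$, and on covariant derivatives of $\vec{H}$ up to order two.

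Once these divergence forms are established, I would iterate the Poincar\'e lemma on $\Sigma$. Because the equation is of sixth (rather than fourth) order, each primitive of $\vec{V}_{\text{tra}}$ should itself carry a secondary divergence structure, yielding a tower of primitives; after careful bookkeeping I expect three geometrically meaningful potentials $\vec{L}, S, \vec{R}$ satisfying a coupled second-order system analogous to \eqref{sisio5} in which every right-hand side is a sum of Jacobian-type quantities built out of $d(\star \vec{n})$, $dS$, $d\vec{R}$, and their natural higher-order counterparts reflecting the extra two orders of differentiation. The third equation of this system will again relate the primitives back to $\vec{\Phi}$ itself through $\Delta_g \vec{\Phi}$.

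The regularity argument would then exploit this Jacobian/null-Lagrangian structure. Under the hypothesis $\vec{\Phi} \in W^{3,2} \cap W^{1,\infty}$, the Gauss map satisfies $\vec{n} \in W^{2,2}$ and $\vec{H} \in W^{1,2}$, so that each quadratic null form on the right-hand side lies in the Hardy space $\mathcal{H}^1(\mathbb{R}^4)$ via Coifman--Lions--Meyer--Semmes compensated compactness. The four-dimensional critical embedding $W^{1,4} \hookrightarrow \mathrm{BMO}$ dual to $\mathcal{H}^1$ then upgrades $S$ and $\vec{R}$ to $W^{2,p}$ for some $p>2$. Feeding this back into the $\Delta_g \vec{\Phi}$ equation raises $\vec{\Phi}$ to $W^{3,p}$, and a standard bootstrap using interior $L^p$ and Schauder theory for the sixth-order system delivers $C^\infty$ regularity, after which the stated scaling estimate follows from a rescaling argument on the ball $B_r$ together with the elliptic estimates above.

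The principal obstacle will be the first step: producing conservation laws in a form clean enough to permit the Poincar\'e iteration. The sixth-order invariant $\vec{\mathcal{W}}^{(4)}$ contains many polynomial terms in $\nabla \vec{H}$, $\vec{H} \cdot \vec{h}$, and $|\vec{H}|^4$, and identifying the exact combinations that form total divergences (and then second divergences) is considerably more delicate than in the classical Willmore case; this is where the conformal invariance of $\mathcal{E}$ will have to be leveraged structurally, not merely as a symmetry. A secondary difficulty is the absence in four dimensions of the clean Wente $L^\infty \cap W^{1,2}$ estimate: one must instead rely on Hardy--BMO duality and gain only an $\varepsilon$-improvement at each bootstrap step, which must be tracked carefully through the iteration to reach smoothness.
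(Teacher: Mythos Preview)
Your overall architecture (Noether conservation laws $\to$ Poincar\'e primitives $\to$ second-order system $\to$ bootstrap) matches the paper's, but the analytic engine you propose for the middle step fails, and this is the heart of the matter. You anticipate that the potentials $\vec{L}, S, \vec{R}$ will satisfy a system whose right-hand sides are Jacobian/null forms to which Coifman--Lions--Meyer--Semmes and Hardy--BMO duality apply. The paper explicitly records that this does \emph{not} happen in four dimensions: after the Poincar\'e step one obtains $dS_0=\vec{L}\overset{\cdot}\wedge_4 d\vec{\Phi}$ and $d\vec{R}_0=\vec{L}\overset{\wedge}\wedge_4 d\vec{\Phi}+\star d\vec{u}-\vec{G}\wedge d\vec{\Phi}$, and no Wente-type structure is visible. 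Under the hypothesis $\vec{\Phi}\in W^{3,2}\cap W^{1,\infty}$ the natural regularity of $\vec{V}$ is only $\dot W^{-2,2}\oplus L^1$, so $\vec{L}$ lives in $\dot W^{-1,2}$, and the pairings above are not div--curl quantities in any useful sense. Your compensated-compactness step therefore has no input to act on.

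What the paper does instead is replace that step by three genuinely different ingredients. First, a Bourgain--Brezis/Van Schaftingen estimate is used to solve $d^\star\vec{L}_0=\vec{V}$ with $\vec{L}_0\in\dot W^{-1,2}$ controlled by $\|\vec{V}\|_{L^1\oplus\dot W^{-2,2}}$; this is the substitute for the Wente gain. Second, the resulting $\dot W^{-1,2}$ control on $dS_0,d\vec{R}_0$ is too weak to bootstrap, so the authors construct an explicitly invertible algebraic operator $\vec{\mathcal P}:\vec\ell\mapsto\vec\ell\overset{\wedge}\wedge_4 d\vec\Phi$ on $\Lambda^1(\Sigma,\Lambda^1(\mathbb R^5))$ and use it to \emph{redefine} the primitives as $S=S_0+\vec\ell\overset{\cdot}\wedge_4 d\vec\Phi$, $\vec R=\vec R_0+\vec\ell\overset{\wedge}\wedge_4 d\vec\Phi$ with a carefully chosen $\vec\ell$; only after this modification do $dS,d\vec R$ land in $L^{4/3}$ with the decay $\|dS\|_{L^{4/3}(B_{kr})}+\|d\vec R\|_{L^{4/3}(B_{kr})}\lesssim(k+\varepsilon)E(B_r)$. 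Third, a separate ``return equation'' links $S,\vec R$ back to $\Delta\vec H$ (not to $\Delta_g\vec\Phi$ directly), yielding a Morrey decay $E(B_r)\lesssim r^\beta E(B_1)$; the integrability of $D\vec H$ is then improved via Adams' maximal-function/Riesz-potential lemma rather than Hardy--BMO. Only after $\vec H$ is Lipschitz does one return to $\vec\Phi$ through $\mathcal L[\vec\Phi]=|g|^{1/2}\Delta\vec\Phi$ and Green-kernel Lorentz estimates. None of these devices appears in your outline, and the second (the operator $\vec{\mathcal P}$ and the redefinition of the primitives) is the step that actually unlocks the problem.
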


\noindent
It is interesting to note that the proof techniques employed in four dimensions  is remarkably different from the regularity proofs of similar problems in two dimensions  (such as the Willmore surfaces, harmonic maps of surfaces etc). Unlike the two dimensional case, the corresponding expressions for $\vec{L}$, $S$ and $\vec{R}$ are much more intricate. Also, the system of equations does not seem to possess Jacobian structure, hence we do not use Wente estimates in our proof. However with the help of a very deep Bourgain-Brezis type result and classical results of Miranda \cite{mir} and Di Fazio \cite{faz}, we are able to obtain preliminary results needed to begin the regularity proof. It is important to remark that one can bypass the lack of a Jacobian structure by applying Bourgain-Brezis result to conservation laws derived from the invariances of the energy.
This present paper is the first step towards the analytical study of the Willmore-type energy \eqref{jkas} which is new in literature.

Let $\Sigma$ be a four dimensional hypersurface and let $\Omega \subset \Sigma$ be a chart. The conformal invariance of $\mathcal{E}(\Sigma)$ gives rise to the existence of a 1-form $\vec{V}$ satisfying the following conservation laws
$$d^\star\vec{V}=\vec{0}\,,\quad\quad d^\star(\vec{\Phi}\cdot\vec{V}-d|\vec{H}|^2)=\vec{0}\, \quad\mbox{and}\quad d^\star(\vec{\Phi}\wedge\vec{V} +\vec{M})$$
where the quantities $\vec{V}$ and $\vec{M}$ depend on  the  derivatives of $\vec{\Phi}$, up to fifth order.
The Poincar\'e Lemma then yields the existence of vector valued 2-forms $\vec{L}_0, \vec{L}$ and $\vec{R}_0$ and a scalar valued 2-form $ S_0$  satsifying \footnote{ Notations used here will be clarified later in section \ref{latty1}}
\begin{align} \vec{V}= d^\star \vec{L}_0, \quad dS_0= \vec{L} \overset{.}\wedge_4 d\vec{\Phi}, \quad\mbox{and}\quad d\vec{R}_0=\vec{L}\overset{\wedge}\wedge_4 d\vec{\Phi} +\star d\vec{u}- \vec{G}\wedge d\vec{\Phi} \label{amusan} \end{align}
for suitable $\vec{u}$ and $\vec{G}$,  where $\vec{L}= \star(\vec{L}_0-d|\vec{H}|^2\wedge d\vec{\Phi})$. Notice that $S_0$ and $\vec{R}_0$ relates to $\vec{L}$, and hence to $\vec{V}$. Unlike in two dimensions, it seems quite difficult to glimpse the following type of system of equations with a Jacobian structure which encodes extra regularity information
\[   \left\{
\begin{array}{ll}
      \Delta S_0=\textnormal{Jacobian} (d(\star\vec{\eta}), d\vec{R}_0) \\
      \Delta \vec{R}_0= \textnormal{Jacobian} (d(\star\vec{\eta}), dS_0, d\vec{R}_0) \\
\end{array} 
\right. \]
 for a suitable $\vec{\eta}$. Hence the need for a different approach which makes this paper even more interesting and original. 
Since the energy $\mathcal{E}(\Sigma)$ is not bounded below, we define  a non negative finite energy $E(\Omega):= \norm{DH}_{L^2(\Omega)} +\norm{H}_{L^4(\Omega)} $ which will serve as the upper bound for some of our estimates. The steps are as follows.

\begin{itemize}
\item Regularity of $\vec{V}$, $\vec{L}_0$ and $\vec{L}$: the energy hypothesis $\vec{\Phi}\in W^{3,2}\cap W^{1,\infty}$ and the dual Sobolev injection $\dot{W}^{-1,4/3}\subset \dot{W}^{-2,2}$ imply that $\vec{V}\in \dot{W}^{-2,2}\oplus L^1$ with the estimate $\|\vec{V}\|_{\dot{W}^{-2,2}\oplus L^1} \lesssim E(\Omega) $. A Bourgain-Brezis result allows us to find $\vec{L}_0\in\dot{W}^{-1,2}$ which solves the first equation in \eqref{amusan} together with the estimate  $\|\vec{L}_0\|_{\dot{W}^{-1,2}(\Omega)} \lesssim E(\Omega)$. The latter and the fact that $\vec{L}= \star(\vec{L}_0-d|\vec{H}|^2\wedge d\vec{\Phi})$ then gives  $\vec{L}\in\dot{W}^{-1,2}$ with a similar estimate.

\item Regularity of $\vec{u}$, $S_0$ and $\vec{R}_0$: a classical result of Miranda \cite{mir} and some standard elliptic estimate helps us find the estimate $\|D\vec{u}\|_{L^2(\Omega')}\lesssim  E(\Omega)$ for all $\Omega'\Subset \Omega$. Next is to use the latter and the estimate for $\vec{L}$ in the second and third equations in \eqref{amusan} to obtain $dS_0, d\vec{R}_0\in \dot W^{-1,2}$ with the estimate $\|dS_0\|_{\dot W^{-1,2}(\Omega)} +\|d\vec{R}_0\|_{\dot W^{-1,2}(\Omega)} \lesssim E(\Omega_1)$ for all $\Omega \Subset \Omega_1$. Since $S_0$ and $\vec{R}_0$ are co-closed, elliptic regularity gives $S_0,\vec{R}_0\in L^2(\Omega)$.

\item The need for $S$ and $\vec{R}$ with estimates in the correct space: the $\dot W^{-1,2}$ estimates on $dS_0$ and $d\vec{R}_0$ are insufficient to bootstrap. We need $L^{4/3}$ estimates in place of $\dot W^{-1,2}$ estimates. Here is a major difficulty prompting a novel approach in our proof. Our strategy is to redefine $\vec{L}$, $S_0$ and $\vec{R}_0$ as $\vec{L}_1$, $S$ and $\vec{R}$ in such a way that these new 2-forms readily satisfy the differential identities analogous to those for $\vec{L}$, $S_0$ and $\vec{R}_0$. In order to do this, we introduce an operator which is algebraically invertible and whose inverse maps $L^2$ into $L^2$ continuously. With a bit of work, we obtain the crucial estimate
$$\|dS\|_{L^{4/3}(B_{kr})} + \|d\vec{R}\|_{L^{4/3}(B_{kr})} \lesssim (k+\varepsilon)E(B_r) \quad\quad \mbox{for} \,\,k\in(0,1/2).$$

\item Controlling geometry and bootstrapping: we need to relate the regularity information we have of the potentials $S$ and $\vec{R}$ back to a geometric quantity ($\vec{\Phi}$ or $\vec{H}$). A short computation gives a {\it return equation} that lets us return back to geometry. We insert the $L^{4/3}$ estimate on $dS$ and $d\vec{R}$ in the return equation and infer the estimate 
$$\|\Delta H\|_{L^{4/3}(B_{kr})} + \|d H\|_{L^2(B_{kr})} \lesssim (\varepsilon + k)E(B_r)  \quad\quad \mbox{for} \,\,k\in(0,1/2).$$
Sobelev embedding and standard elliptic estimates then yield
$E(B_{kr})\lesssim (\varepsilon +k)E(B_r)$ from which we obtain the Morrey decay $E(B_r)\lesssim r^\beta E(B_1)$ for $r<1$. This then allows us to obtain a bound for the maximal function of $\Delta \vec{H}$. We call upon a lemma of Adams \cite{ada} in order to bound $D\vec{H}$ in $L^s(B_r)$ when $s\in(2, 8/3)$, thus improving the regularity of $D \vec{H}$. This can then be bootstrapped until $\vec{H}$ is Lipschitz, consequently $\Delta \vec{\Phi}$ is as well Lipschitz.  Observe  that $|g|^{1/2}\vec{H}=|g|^{1/2}\Delta \vec{\Phi}  \in W^{1,4}$.  Lorentz space theory guarantees $D\vec{\Phi} \in C^{0,\alpha}$ for $\alpha < 1$.
One can then conclude that $\vec{\Phi}$ is smooth by standard elliptic theory.
\end{itemize}

\vskip 3mm
\noindent
The paper is organised as follows. In Section \ref{latty1} we compute the  {\it Willmore-type equation} for the energy $\mathcal{E}(\Sigma)$, this is basically Theorem \ref{WE}. Thereafter, we investigate other possible four dimensional generalisations of the energy $\mathcal{E}(\Sigma)$ and argue that the only energy whose critical points include minimal hypersurfaces is the energy $\mathcal{E}(\Sigma)$; the other energies violate the condition \eqref{ajaki}. We then proceed to obtain conservation laws for the energy $\mathcal{E}(\Sigma)$ in Theorem \ref{cons-law} and Proposition \ref{coro}. In Section \ref{chap4}, we prove the main result of the paper. This is done in a series of steps organised into Subsections \ref{adzsq} - \ref{linkage}.   
\vskip 3mm

\noindent
{\bf Notational conventions.} Unless otherwise specified, we will adopt the following notations and conventions in this work. 
\begin{enumerate}
\item We will use the Greek letter $\delta$ to denote the variation of a quantity. The Greek letter $\delta$ with indices will be reserved for the component of the flat metric. For example, $\delta g_{ij}$ is the variation of the $(i,j)$ component of the metric $g$; while $\delta^{i}_j $ is the $(i,j)$ component of the flat metric.

\item We adopt Einstein's summation convention for our computations. 

\item We denote by $\pi_{\vec{n}} (\vec{V})$ and $\pi_{T}(\vec{V})$ the normal and tangential projections respectively of some vector $\vec{V}$. We have $\pi_{\vec{n}}+\pi_{T}=\textnormal{id}$.

\item Also, we use the metric tensor $g_{ij}$  (or the inverse metric  tensor $g^{ij}$) to lower (or raise)  indices on tensors. For example, $\tensor{T}{_j_l}= g_{ij}g_{kl} T^{ik}$ and $\tensor{T}{^j^l}=g^{ij}g^{kl}T_{ik}$.

\item The operators $\Delta$, $\Delta_g$ and $\Delta_0$ will denote the Hodge Laplacian, the Laplace Beltrami and the flat Laplacian respectively. The operators $\Delta$ and $\Delta_g$ coincide when they act on ``scalar" functions.

\item
We now briefly explain some of the notations and conventions we have adopted. The {\it small dot operator} $\cdot$ is used to denote the usual dot product between vectors. We also use the small dot to denote the natural extension of the dot product in $\mathbb{R}^m$ to multivectors (see \cite{fed}). We define the bilinear map 
$\mathlarger{\mathlarger{\mathlarger{\mathlarger{\llcorner}} }} :\Lambda ^p(\mathbb{R}^m)\times \Lambda^q(\mathbb{R}^m)\rightarrow \Lambda^{p-q}(\mathbb{R}^m)$, known as the {\it interior multiplication} of multivectors, by
$$
\langle \vec{A}\,\,\mathlarger{\mathlarger{\mathlarger{\mathlarger{\llcorner}} }} \vec{B},\, \vec{C}\rangle_{\mathbb{R}^m}=\langle \vec{A}, \vec{B}\wedge \vec{C}\rangle_{\mathbb{R}^m} \label{se123}$$
where $\vec{A}\in \Lambda^p(\mathbb{R}^m)$, $\vec{B}\in \Lambda^q(\mathbb{R}^m)$ and $\vec{C}\in \Lambda^{p-q}(\mathbb{R}^m)$ with $p\geq q$. If $p=1=q$ then we have the usual dot product. Here, we have used the notation $\langle \cdot, \cdot \rangle_{\mathbb{R}^m}$ to emphasise the inner product of multivectors in $\mathbb{R}^m$. We will reserve the notation $\langle\cdot, \cdot\rangle$ for the inner product of forms on patches of the manifold,  this  will be defined and used in Section \ref{chap4}.

\noindent
The {\it bullet dot operator}   $\bullet: \Lambda^p(\mathbb{R}^m)\times \Lambda^q(\mathbb{R}^m)\rightarrow \Lambda^{p+q-2}(\mathbb{R}^m)$ is well known in geometric algebra as the first-order contraction operator. For a $k$-vector $\vec{A}$ and a $1$-vector $\vec{B}$, we define 
$$\vec{A}\bullet \vec{B}=\vec{A}\mathlarger{\mathlarger{\mathlarger{\mathlarger{\llcorner}} }} \vec{B}.$$
For a $p$-vector $\vec{B}$ and a $q$-vector $\vec{C}$, we have
$$\vec{A}\bullet (\vec{B}\wedge\vec{C})=(\vec{A}\bullet \vec{B})\wedge \vec{C}+ (-1)^{pq}(\vec{A}\bullet\vec{C})\wedge\vec{B}.$$

\end{enumerate}

\vskip 3mm

\noindent
{\bf Acknowledgements.} This work forms a significant portion of the second author's PhD Thesis at Monash University. The authors are grateful to Rod Gover, Yannick Sire, Jean Van Schaftingen, Pierre Bousquet and Eduard Curca  for stimualting discussions. The second author is deeply grateful for the Monash Graduate Scholarship he received during his doctoral studies, various support from the School of Mathematics at Monash University and the excellent working environment. The authors would also like to thank the referees for their useful comments.

\section{Variational Derivations and Conservation Laws}\label{latty1}

\noindent
We derive the Willmore-type equation for the  energy $\mathcal{E}(\Sigma)$. This  sixth-order, elliptic, nonlinear equation was first obtained in \cite{robingraham} and \cite{zhang} in arbitrary codimension\footnote{There is a mistake in \cite{robingraham}, in higher codimension, which was corrected in \cite{peterthesis}.}. In order to study this equation \footnote{We will study the Willmore-Euler-Lagrange equation in codimension one. However, our method works in higher codimension at the cost of really cumbersome notations.}, we first establish conservation laws satisfied by the critical points of $\mathcal{E}(\Sigma)$ under translation, dilation and rotation invariance. From this, we write associated   Willmore-type operator as  the divergence of {\it the stress-energy tensor} $\vec{V}$. This is made precise in Theorems  \ref{WE} and \ref{cons-law}.
In Subsection \ref{sekoko}, we obtain further conservation laws by proving the existence of some 2-forms needed to write some useful equations.
The major achievement of Section \ref{latty1} is the reduction of the complicated sixth-order elliptic nonlinear Willmore-Euler-Lagrange equation for $\mathcal{E}(\Sigma)$ to a  manageable system of second order  partial differential equations. Although largely computational, this section prepares the ground for the local analysis of critical points of $\mathcal{E}(\Sigma)$.

\vskip 3mm

\subsection {Willmore-Euler-Lagrange type equation }\label{sekiki}
\begin{df}
Let  $\vec{\Phi}:\Sigma\rightarrow\mathbb{R}^5 $ be an immersion of a 4-dimensional hypersurface $\Sigma$. Let $\mathcal{E}(\Sigma)$ be the Willmore-type energy
$$\mathcal{E}(\Sigma):=\int_{\Sigma}\left(| \nabla H|^2- H^2h^2+ 7H^4\right) \, d\textnormal{vol}_g.$$ 
The manifold $\Sigma$ is Willmore-type for $\mathcal{E}(\Sigma)$ if it is a critical point of $\mathcal{E}(\Sigma).$
\end{df}
\noindent
An important characterisation of Willmore-type immersions can be deduced from the Willmore-Euler-Langrange equation associated with $\mathcal{E}(\Sigma)$ which we now state. 
\begin{theo} \label{WE}
Let  $\vec{\Phi}:\Sigma\rightarrow\mathbb{R}^m $ be an immersion of a 4-dimensional manifold $\Sigma$.  
\noindent
  $\vec{\Phi}$ is Willmore-type for $\mathcal{E}(\Sigma)$ if and only if $\vec{\Phi}$ satisfies the following Willmore-Euler-Lagrange equation  
\begin{align}
\vec{\mathcal{W}}=\vec{0}            \label{abc}
\end{align}
where 
\begin{align}
\vec{\mathcal{W}}&:=-\frac{1}{2}(\Delta^2H)\vec{n}-\frac{1}{2}(|h|^2\Delta H)\vec{n}-4|\nabla H|^2\vec{H} 
+2(\nabla_iH\nabla_jH)\vec{h}^{ij}
\nonumber
                                \\&\quad-\frac{1}{2}\Delta(H|h|^2)\vec{n}-2\nabla_i\nabla_k(H^2 h^{ik})\vec{n} - 2H^2 h^{ik}h_{mk}\vec{h}_i^m-28|H|^4\vec{H}                                                 \nonumber
\\&\quad      -\frac{1}{2}|h|^{4}\vec{H}        \nonumber
   +7 \Delta(|H|^2H)\vec{n}  +11|Hh|^2\vec{H} \nonumber
\end{align}
where $\Delta$ is the negative  Laplace Beltrami operator.

\end{theo}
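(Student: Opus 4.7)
The plan is to compute the first variation of $\mathcal{E}(\Sigma)$ under a compactly supported normal deformation $\delta\vec{\Phi} = w\vec{n}$, and to read off $\vec{\mathcal{W}}$ from the identity $\delta\mathcal{E} = -2\int_\Sigma w\,(\vec{\mathcal{W}}\cdot\vec{n})\,d\textnormal{vol}_g$. Tangential variations can be discarded because $\mathcal{E}(\Sigma)$ is manifestly invariant under reparametrisations, so only the normal component contributes to the Euler--Lagrange equation. The ``if and only if'' then follows from the fundamental lemma of the calculus of variations since $w$ is arbitrary. The required building blocks are the standard normal-variation formulas $\delta g_{ij}=-2wh_{ij}$, $\delta g^{ij}=2wh^{ij}$, $\delta(d\textnormal{vol}_g)=-4wH\,d\textnormal{vol}_g$, $\delta h_{ij}=\nabla_i\nabla_jw-w\,h_{ik}h^k_j$, together with the scalar identities $\delta H=\tfrac14\Delta w+\tfrac14 w|h|^2$ (with the negative-Laplacian sign fixed in the paper), $\delta|h|^2=2h^{ij}\nabla_i\nabla_jw+2w\,h^{ij}h_{ik}h^k_j$, and $\delta(\nabla_iH)=\nabla_i\delta H$ (no Christoffel contribution since $H$ is a scalar).

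I would split the integrand into three blocks and treat each separately. The block $7\int H^4\,d\textnormal{vol}_g$ is the easiest: after inserting $\delta H$ and $\delta(d\textnormal{vol}_g)$ and integrating by parts once, one recovers exactly the summands $7\Delta(|H|^2H)\vec{n}$ and $-28|H|^4\vec{H}$, together with a cubic-curvature cross-term that will merge with contributions from the other blocks into $11|Hh|^2\vec{H}$. The block $-\int H^2|h|^2\,d\textnormal{vol}_g$ requires two integrations by parts and produces, respectively, $-\tfrac12\Delta(H|h|^2)\vec{n}$ from the Hessian piece inside $\delta H$, $-2\nabla_i\nabla_k(H^2h^{ik})\vec{n}$ from the Hessian piece inside $\delta|h|^2$, the algebraic remainder $-2H^2h^{ik}h_{mk}\vec{h}^m_i-\tfrac12|h|^4\vec{H}$ from the cubic curvature terms, and the final part of $11|Hh|^2\vec{H}$. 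The most delicate block is $\int|\nabla H|^2\,d\textnormal{vol}_g$: writing $|\nabla H|^2=g^{ij}\nabla_iH\nabla_jH$, the variation receives contributions from $\delta g^{ij}$ (yielding $2\nabla_iH\nabla_jH\,\vec{h}^{ij}-4|\nabla H|^2\vec{H}$ after combining with $\delta(d\textnormal{vol}_g)$), and from $\delta(\nabla_iH)=\nabla_i\delta H$, which after two integrations by parts produces the sixth-order summand $-\tfrac12(\Delta^2H)\vec{n}$ together with $-\tfrac12|h|^2\Delta H\,\vec{n}$ coming from the $w|h|^2$ term inside $\delta H$.

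The principal obstacle is the algebraic bookkeeping involved in collapsing the three blocks into the stated normal form. Commutators of covariant derivatives arising from the double integrations by parts must be resolved via the Gauss equation $R_{ijkl}=h_{ik}h_{jl}-h_{il}h_{jk}$, and the Codazzi identity $\nabla_ih_{jk}=\nabla_jh_{ik}$ must be invoked repeatedly to collapse apparent duplications into the listed cubic and quintic curvature monomials; the coefficient $11$ in $11|Hh|^2\vec{H}$ and the coefficient $-\tfrac12$ in $-\tfrac12|h|^4\vec{H}$ in particular emerge only after several such rearrangements across blocks. Once all contributions are gathered and the normal component is extracted, the stated expression for $\vec{\mathcal{W}}$ follows, and $\delta\mathcal{E}=0$ for arbitrary $w$ is then equivalent to $\vec{\mathcal{W}}=\vec{0}$.
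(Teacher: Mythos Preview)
Your proposal is correct and follows the standard first-variation approach; the paper itself does not supply a proof but defers to \cite[Proposition~5.6]{robingraham} and \cite[Theorem~2.2.2]{peterthesis}, and the Remark immediately following the theorem confirms that the argument there proceeds exactly as you outline, namely by writing $\delta\mathcal{E}=\int(\vec{B}\cdot\vec{\mathcal{W}}+\nabla_jV^j)\,d\textnormal{vol}_g$ for an arbitrary variation with normal part $\vec{B}$ and then invoking the fundamental lemma. Your restriction to $\delta\vec{\Phi}=w\vec{n}$ from the outset is equivalent in codimension one (which is the setting of the stated $\vec{\mathcal{W}}$, despite the $\mathbb{R}^m$ in the hypothesis), and your block-by-block decomposition with the listed variation formulas is precisely the computation carried out in those references.
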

The proof of Theorem \ref{WE} can be found in \cite[Proposition 5.6]{robingraham} with the mean curvature rescaled, and in \cite[Theorem 2.2.2]{peterthesis} in higher codimension.

\begin{rmk}(see \cite[Page 25]{peterthesis})
At the critical point of $\mathcal{E}(\Sigma)$, there is a scalar tensor $V$ satisfying  
\begin{align}
\delta\int (\vec{B}\cdot\mathcal{\vec{W}}+\nabla_j V^j)\,\,d\textnormal{vol}_g=0  \nonumber
\end{align}
where $\vec{B}$ is the normal part of the variation of $\vec{\Phi}$ and
\begin{align}
V^j&= \left(|\nabla H|^2 -H^2h^2+7H^4\right)A^j      +\frac{1}{2}(\vec{B}\cdot\nabla^j\vec{H})|h|^2  +\frac{1}{2}\nabla^j H \Delta(\vec{B}\cdot\vec{n})     \nonumber
\\&\quad -\frac{1}{2}\nabla^j(\vec{B}\cdot\vec{n})\Delta H+\frac{1}{2}(\vec{B}\cdot\vec{n})\nabla^j\Delta H     -2H^2 (\vec{h}^{ij}\cdot\nabla_i\vec{B})  +2\vec{B}\cdot\nabla_i(H^2\vec{h}^{ij})                 \nonumber
\\& \quad-\frac{1}{2}(\vec{H}\cdot\nabla^j\vec{B} )|h|^2     
  +\frac{1}{2}\vec{B}\cdot\nabla^j(|h|^2\vec{H}) +7 |\vec{H}|^2\vec{H}\cdot\nabla^j\vec{B}-7\vec{B}\cdot\nabla^j(|\vec{H}|^2 \vec{H}).            \nonumber
\end{align}
\end{rmk}

\begin{rmk}
As already pointed out by Zhang \cite{zhang} and Robin-Graham-Reichert \cite{robingraham}, examples of critical points of $\mathcal{E}(\Sigma)$ include  minimal submanifolds, totally geodesic submanifolds $\Sigma$ of $\mathbb{R}^m$, round spheres $\mathbb{S}^4$ of $\mathbb{R}^m$ and products of spheres of radius $r$ such as:  $\mathbb{S}^2(r_1)\times \mathbb{S}^2(r_2)$, $\mathbb{S}^1(r_1)\times \mathbb{S}^3(r_2)$, $\mathbb{S}^1(r_1)\times \mathbb{S}^1(r_2)\times \mathbb{S}^2(r_3)$ and $\mathbb{S}^1(r_1)\times \mathbb{S}^1(r_2)\times \mathbb{S}^1(r_3)\times \mathbb{S}^1(r_4)$  where $\sum r_i^2=1.$
 Indeed, these are all solutions of  the sixth order non-linear elliptic partial differential equation \eqref{abc}.
\end{rmk}

\subsection{The other energies} \label{othergen}

The energy \eqref{jkas} is unbounded from below as observed by Robin-Graham and Reichert in \cite{robingraham} but we can add   the energy $\mu\int_{\Sigma}|h_0|^4\,\, d\textnormal{vol}_g$ to it in order to obtain a nonnegative energy. This holds for $\mu\geq\frac{1}{12}$. In this section, we will investigate these other nonnegative energies in codimension one. 

\begin{prop}[See Proposition 6.6 of \cite{robingraham}]
Let $\vec{\Phi}:\Sigma\rightarrow\mathbb{R}^5$ be an immersion. If $\mu\geq\frac{1}{12}$, then the energy $\mathcal{E}(\Sigma)+\mu\int_\Sigma |h_0|^4\,\, d\textnormal{vol}_g$ is nonnegative.
\end{prop}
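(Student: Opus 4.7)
The plan is to reduce the assertion to a pointwise algebraic inequality by decomposing the second fundamental form into its trace and trace-free parts and then analysing the resulting expression as a quadratic in $|h_0|^2$. Since $|\nabla H|^2 \geq 0$ is manifestly nonnegative, the integration can be performed term-by-term and it suffices to show nonnegativity of the pointwise integrand.

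In codimension one, with the normalisation $H=\tfrac{1}{4}\mathrm{Tr}_g h$ used throughout the paper, the splitting $h_{ij}=(h_0)_{ij}+Hg_{ij}$ together with $g^{ij}g_{ij}=4$ and the tracelessness of $h_0$ yields
\[
|h|^2=|h_0|^2+4H^2.
\]
Substituting this into the codimension-one integrand of $\mathcal{E}(\Sigma)+\mu\int_\Sigma|h_0|^4\, d\mathrm{vol}_g$ gives
\[
|\nabla H|^2 - H^2|h|^2 + 7H^4 + \mu|h_0|^4
= |\nabla H|^2 + \bigl(\mu|h_0|^4 - H^2|h_0|^2 + 3H^4\bigr).
\]

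With the $|\nabla H|^2$ term set aside, I would read the bracketed quantity as a quadratic polynomial in the nonnegative variable $x := |h_0|^2$, namely
\[
P(x)\;:=\;\mu x^2 - H^2 x + 3H^4,
\]
with leading coefficient $\mu>0$. Its discriminant is $\Delta = H^4 - 12\mu H^4 = (1-12\mu)H^4$, which is nonpositive precisely when $\mu\geq \tfrac{1}{12}$. In that regime $P(x)\geq 0$ for every real $x$, in particular for $x=|h_0|^2\geq 0$ at every point of $\Sigma$. Adding back $|\nabla H|^2\geq 0$ and integrating over $\Sigma$ delivers the nonnegativity of $\mathcal{E}(\Sigma)+\mu\int_\Sigma|h_0|^4\, d\mathrm{vol}_g$.

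The argument is purely algebraic and local; no analytic obstacle arises. The only point meriting a remark is that the threshold $\mu=\tfrac{1}{12}$ is sharp in this pointwise sense: when $\mu=\tfrac{1}{12}$ the polynomial $P$ has a double root at $x=6H^2$, so the bound is attained (formally) along configurations with $|h_0|^2=6H^2$ and $\nabla H=0$, which matches the sharpness noted in Robin-Graham--Reichert.
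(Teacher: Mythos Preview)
Your proof is correct and is essentially the same as the paper's: both substitute $|h|^2=|h_0|^2+4H^2$, discard the nonnegative gradient term, and reduce to the positivity of the quadratic $\mu|h_0|^4 - H^2|h_0|^2 + 3H^4$. The only cosmetic difference is that the paper phrases this as nonnegative definiteness of the two-variable form $3x^2 - xy + \mu y^2$ (giving $3\mu - \tfrac{1}{4}\geq 0$), whereas you fix $H$ and compute the discriminant in the single variable $|h_0|^2$; the algebraic content is identical.
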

\begin{proof}
By using the formula $(h_0)_{ij}=h_{ij}-Hg_{ij}$, we have
\begin{align}
\mathcal{E}(\Sigma)+\mu\int_{\Sigma}|h_0|^4&= \int_{\Sigma} |\nabla H|^2 -|H|^2(|h_0|^2 +4|H|^2) +7|H|^4 +\mu |h_0|^4  \nonumber
\\&= \int_{\Sigma} |\nabla H|^2  +3|H|^4 -|H|^2|h_0|^2 +\mu |h_0|^4 .  \nonumber
\end{align}
Since $\int_{\Sigma} |\nabla H|^2\geq 0$, a sufficient condition for 
$\mathcal{E}(\Sigma)+\mu\int_{\Sigma}|h_0|^4$ to be nonnegative is that the quadratic form $3x^2-xy+\mu y^2$ be nonnegative definite. This gives that $3\mu-\frac{1}{4}\geq 0$ which implies $\mu\geq\frac{1}{12}$.
\end{proof}

\begin{rmk}\label{idf}
Among all energies of the form 
$$\mathcal{E}(\Sigma)+\mu\int_\Sigma |h_0|^4\,\, d\textnormal{vol}_g\,,$$we strongly suspect that the only energy whose critical points include minimal hypersurfaces is $\mathcal{E}(\Sigma)$.
\end{rmk}
We now give an argument to substantiate this remark. Since  minimal hypersurfaces are critical points of $\mathcal{E}(\Sigma)$, we only need to vary  the energy $\int_{\Sigma}|h_0|^4 \,\, d\textnormal{vol}_g$ and show that its critical points do not include minimal hypersurfaces.

\vskip3mm
\noindent
By using the decomposition $(h_0)_{ij}=h_{ij}-Hg_{ij}$ we find that
\begin{align}
|h_0|^4=|h|^4 -8|Hh|^2+16|H|^4 . \nonumber
\end{align}
The variations of $\int_\Sigma|Hh|^2 \,\,d\textnormal{vol}_g$ and $\int_\Sigma|H|^4\,\,d\textnormal{vol}_g$ have already been done in the proof of Theorem \eqref{WE} (see \cite{peterthesis}) and clearly their critical points include minimal hypersurfaces. So, we only need to vary $\int_\Sigma |h|^4\,\,d\textnormal{vol}_g$.

\vskip2mm
\noindent
We find
\begin{align}
\delta |h|^2=\delta(\vec{h}_{ij}\cdot\vec{h}^{ij}) = A^s \nabla_s(\vec{h}_{ij}\cdot\vec{h}^{ij}) + 4(\vec{B}\cdot\vec{h}^{is})(\vec{h}_{ij}\cdot\vec{h}^j_s) + 2\vec{h}^{ij}\cdot\nabla_i\nabla_j\vec{B}                 . \nonumber
\end{align}
\noindent
Hence
\begin{align}
\delta|h|^4 &= 2|\vec{h}|^2\delta|\vec{h}|^2                                    \nonumber
\\& = A^s\nabla_s|h|^4 + 8(\vec{B}\cdot\vec{h}^{is})(\vec{h}_{ij}\cdot\vec{h}^j_s) |h|^2+ 4(\vec{h}^{ij}\cdot\nabla_i\nabla_j\vec{B}) |h|^2                   \nonumber
\end{align}
and
\begin{align}
|g|^{-1/2} \delta(|h|^4 |g|^{1/2})&=       \nabla_s(A^s |h|^4) + 8(\vec{B}\cdot \vec{h}^{is}) (\vec{h}_{ij}\cdot\vec{h}^j_s) |h|^2                                   \nonumber
\\&\quad\quad\quad + 4(\vec{h}^{ij}\cdot\nabla_i\nabla_j\vec{B})|h|^2-4|h|^4\vec{B}\cdot\vec{H} \nonumber
\\&= \nabla_s V_3^s+ \vec{B}\cdot\vec{\mathcal{W}}_3 \nonumber
\end{align}
where
\begin{align}
V_3^s&:= A^s|h|^4 +4(\vec{h}^{sj}\cdot\nabla_j\vec{B})|h|^2 -4\vec{B}\cdot\nabla_j(\vec{h}^{js}|h|^2) \nonumber
\end{align}
and
\begin{align}
\vec{\mathcal{W}}_3 := 4\vec{h}^{is}(\vec{h}_{ij}\cdot\vec{h}^j_s)|h|^2 -4|h|^4\vec{H} +4\nabla_j\nabla_i(h^{ij}|h|^2) \vec{n}      . \label{jagaban78}
\end{align}

The critical point of the energy $\int_\Sigma |h|^4\,\, d\textnormal{vol}_g$ satisfy
\begin{align}
\int_{\Sigma_0}(\vec{B}\cdot\mathcal{\vec{W}}_3+\nabla_jV^j_3)\,\, d\textnormal{vol}_g=0 .\label{kuj11hjk}
\end{align}
As \eqref{kuj11hjk} holds on every small patch $\Sigma_0$ of the manifold $\Sigma$, we can write
$$\vec{B}\cdot\mathcal{\vec{W}}_3+\nabla_jV^j_3=0.$$
The Euler-Lagrange equation for $\mathcal{E}(\Sigma)$ corresponds to the normal variation of $\int_\Sigma |h|^4\,\, d\textnormal{vol}_g$, that is $\pi_{\vec{n}}\delta\vec{\Phi}=\vec{B}$.
This gives
\begin{align}
\mathcal{\vec{W}}_3=\vec{0}.  \label{zihuaguoyann}
\end{align}
We will now argue that minimal hypersurfaces do not satisfy \eqref{zihuaguoyann}.
Assume that $\Sigma$ is a minimal hypersurface. 
The minimality assumption implies that 
\begin{align}
H=\frac{1}{4}h_j^j=0.  \label{minim420}
\end{align}  
By using \eqref{minim420} and the contracted Codazzi-Mainardi equation $(\nabla_i h_{j}^j=\nabla_j h_{i}^j)$ in \eqref{jagaban78}, we find
\begin{align}
\mathcal{\vec{W}}_3&= 4\textnormal{Tr}\vec{h}^3|h|^2  +4\vec{h}^{ij}\nabla_i\nabla_j|h|^2 . \label{hard}
\end{align}
It is hard to imagine that the assumption $H=0$ would make the right hand side of \eqref{hard} vanish. However, we do not rule out the possibility of having a minimal hypersurface for which the right hand side of \eqref{hard} would vanish. Thus, we strongly suspect that for any $\mu$, the critical points of the energy \begin{align}
\mathcal{E}(\Sigma)+\mu\int_\Sigma|h_0|^4\,\,d\textnormal{vol}_g \label{NO00}
\end{align}
 do not include minimal hypersurfaces.

\begin{rmk}
One could also add another curvature term to have
\begin{align}
\mathcal{E}(\Sigma)+\int_\Sigma \sigma\textnormal {Tr}(h_0^4) +\mu|h_0|^4\,\,d\textnormal{vol}_g\quad\quad \sigma,\mu\in \mathbb{R}. \label{NO01}
 \end{align}
However, computations show a strong evidence that minimal hypersurfaces are not critical points of the energies  \eqref{NO00} and \eqref{NO01}.
\end{rmk}

\subsection{Conservation laws}  \label{sekaka}
In this section, the left hand side of the Willmore-type equation \eqref{abc} is reformulated in terms of some divergence free quantities with the help of Noether's theorem.

\vskip3mm
\noindent
Let $\Omega$ be an open subset of $D\subset \mathbb{R}^n$ and let $E\subset \mathbb{R}^m$. Let the Lagrangian
$$L:\big\{ (x,y,z): (x,y)\in D\times E,\, z\in T_y E\otimes T^* _x D \big\}\longrightarrow \mathbb{R}$$
be continuously differentiable. By choosing a $C^1$ density measure $d\mu (x)$ on $\Omega$, we can define a functional $\mathcal{L}$ on the set of maps $C^1(\Omega, E)$ via 
$$\mathcal{L}(u):= \int_{\Omega} L(x, u(x), du(x))\, d\mu(x).$$
A tangent vector $X$ is an {\it infinitesimal symmetry} for $\mathcal{L}$ if and only if 
$$\frac{\partial L}{\partial y^i}(x, y, z) X^i(y) +\frac{\partial L}{\partial z^i_\alpha}(x, y, z) \frac{\partial X^i}{\partial y^j}(y)z^j_\alpha =0.$$
The vector field $J$ defined by  
$$J^\alpha:= \rho(x)\frac{\partial L}{\partial z^i_\alpha}(x, u, du) X^i(u),$$
where $\rho$ satisfies $d\mu(x)= \rho(x) dx^1 \cdots dx^n$, is the well known {\it Noether current} in  Physics. We are now ready to state a version of Noether's theorem relevant for our purpose. 

\begin{theo}\label{thirstyy}
Let $X$ be a Lipschitz tangent vector field on $E$, which is an infinitesimal symmetry for $\mathcal{L}$. If $u:\Omega \rightarrow  E$ is a critical point of $\mathcal{L}$, then 
$$\sum_{\alpha=1}^n \frac{\partial J^\alpha}{\partial x^\alpha}  =0,$$
where $\{ x^\alpha\}_{\alpha=1,...,n}$ are coordinates on $\Omega$ such that $d\mu(x)= \rho(x) dx^1 \cdots dx^n$.
\end{theo}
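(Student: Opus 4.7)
The strategy is to exploit the infinitesimal symmetry hypothesis and the criticality hypothesis simultaneously, by constructing a one-parameter family of perturbations that tests against an arbitrary scalar cut-off function. The symmetry of $L$ will kill the ``algebraic'' part of the variation, leaving only a term involving the derivatives of the cut-off, which is exactly the divergence of $J$ after integration by parts.

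More precisely, I would fix an arbitrary $\varphi \in C^{\infty}_c(\Omega)$ and, for $t$ sufficiently small, define a perturbation
\[
u_t(x) := u(x) + t\,\varphi(x)\,X(u(x)).
\]
Since $X$ is Lipschitz on $E$, the family $u_t$ sits in a suitable admissible class and $u_t \to u$ in $C^1$ as $t \to 0$, so differentiation under the integral is legitimate. A direct computation using the chain rule gives
\[
\partial_\alpha u_t^i = \partial_\alpha u^i + t\,(\partial_\alpha \varphi)\, X^i(u) + t\,\varphi\, \tfrac{\partial X^i}{\partial y^j}(u)\, \partial_\alpha u^j + O(t^2),
\]
so that
\[
\tfrac{d}{dt}\Big|_{t=0} L(x, u_t, du_t) = \varphi\!\left[\tfrac{\partial L}{\partial y^i}X^i(u) + \tfrac{\partial L}{\partial z^i_\alpha}\tfrac{\partial X^i}{\partial y^j}(u)\,\partial_\alpha u^j\right] + (\partial_\alpha \varphi)\,\tfrac{\partial L}{\partial z^i_\alpha}X^i(u).
\]
The crucial observation is that the bracket vanishes identically, because the infinitesimal symmetry identity was posited as an equation in $(x,y,z)$ and hence holds at the particular point $(x, u(x), du(x))$. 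Only the term proportional to $\partial_\alpha \varphi$ survives.

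Multiplying by the density $\rho(x)$, integrating over $\Omega$, and using the criticality of $u$ (which forces $\frac{d}{dt}\big|_{t=0}\mathcal{L}(u_t) = 0$ for any such compactly supported perturbation) yields
\[
0 = \int_\Omega (\partial_\alpha \varphi)(x)\, \rho(x)\, \tfrac{\partial L}{\partial z^i_\alpha}(x,u,du)\,X^i(u)\, dx = \int_\Omega J^\alpha(x)\, \partial_\alpha \varphi(x)\, dx.
\]
Since $\varphi \in C^{\infty}_c(\Omega)$ is arbitrary, the fundamental lemma of the calculus of variations (applied in the distributional sense) delivers $\sum_\alpha \partial_\alpha J^\alpha = 0$ in $\mathcal{D}'(\Omega)$, which is the desired conservation law.

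The main technical point to watch is the regularity of the composition $X \circ u$: because $X$ is only assumed Lipschitz, the terms $\partial_j X^i(u)\,\partial_\alpha u^j$ exist merely almost everywhere, but this is enough to carry out the pointwise computation a.e.\ and justify the integration. Beyond this, the proof is essentially algebraic: the whole point is that ``$X$ is an infinitesimal symmetry'' has been formulated precisely so that the non-divergence piece of $\tfrac{d}{dt}L(x,u_t,du_t)$ drops out, leaving a total divergence whose integral against any test function vanishes at a critical point.
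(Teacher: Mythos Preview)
Your argument is correct and is the standard variational proof of Noether's theorem: perturb by $u_t = u + t\varphi\,X(u)$, use the infinitesimal symmetry hypothesis to kill the algebraic piece, and use criticality against the compactly supported test function to conclude that $J$ is distributionally divergence-free. The regularity caveat you flag about $X$ being merely Lipschitz is appropriate and handled correctly.

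Note, however, that the paper does not actually supply a proof of this theorem. It is stated as a background result (a version of Noether's theorem) and then immediately \emph{applied} to derive Theorem~\ref{cons-law}; no argument for Theorem~\ref{thirstyy} itself appears in the text. So there is no ``paper's own proof'' to compare against --- your proof fills in what the authors take for granted.
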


\noindent
The following result holds by applying Noether's theorem to the invariances of the energy $\mathcal{E}(\Sigma).$
\begin{theo}\label{cons-law}
Let $\vec{\Phi}:\Sigma\rightarrow \mathbb{R}^5$ be a smooth immersion of a 4-dimensional hypersurface $\Sigma$ and let $\mathcal{\vec{W}}$ be as defined in Proposition \ref{WE} with $\mathcal{\vec{W}}=\vec{0}$. Define the following quantity
\begin{align}
\vec{V}^j:&=  |\nabla H|^2\nabla^j\vec{\Phi}-2(\nabla^jH \nabla_iH)\nabla^i\vec{\Phi}+\frac{1}{2}(h^j_i\Delta H)\nabla^i\vec{\Phi}  +\frac{1}{2} (\nabla^j\Delta H)\vec{n} \nonumber
\\&\quad -2\nabla_i(H^2\vec{h}^{ij})+4\nabla_i(H^2h^{ij})\vec{n}     -\frac{1}{2}\nabla^j(|h|^2\vec{H}) -H^2h^2\nabla^j\vec{\Phi}      \nonumber \\&\quad+\nabla^j(|h|^2H)\vec{n}+7|\vec{H}|^4\nabla^j\vec{\Phi}+7\nabla^j(|\vec{H}|^2\vec{H})-14\nabla^j(|\vec{H}|^2H)\vec{n}. \nonumber
\end{align}

\noindent
Then the following conservation laws hold by the invariance of $\mathcal{E}(\Sigma)$ under translation, dilation and rotation respectively.
 \[   \left\{
\begin{array}{ll}
    \vec{0}&=\nabla_j\vec{V}^j\\
      0&=\nabla_j(\vec{\Phi}\cdot\vec{V}^j-\nabla^j|\vec{H}|^2) \\
    \vec{0}&=\nabla_j(\vec{\Phi}\wedge\vec{V}^j+ \vec{M}^j)\\
\end{array} 
\right. \]
where $\vec{M}^j$ denotes the two-vector
\begin{align}\vec{M}^j:&=\frac{1}{2}(\Delta H)\vec{n}\wedge\nabla^j\vec{\Phi}+2(Hh^{ij})\vec{H}\wedge \nabla_i\vec{\Phi}\nonumber
+\frac{1}{2}|h|^2\vec{H}\wedge\nabla^j\vec{\Phi}-7|\vec{H}|^2\vec{H}\wedge\nabla^j\vec{\Phi}.  \nonumber\end{align}

\end{theo}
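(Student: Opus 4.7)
The plan is to apply Noether's theorem (Theorem \ref{thirstyy}) to each of the three infinitesimal invariances of $\mathcal{E}(\Sigma)$, starting from the first-variation identity
\begin{align}
|g|^{-1/2}\delta(L|g|^{1/2})=\vec{B}\cdot\vec{\mathcal{W}}+\nabla_j V^j \nonumber
\end{align}
recorded in the Remark after Theorem \ref{WE}, where $\vec{B}=\pi_{\vec{n}}\delta\vec{\Phi}$, $A^i\nabla_i\vec{\Phi}=\pi_T\delta\vec{\Phi}$, and $V^j$ has the explicit expression listed there. At a critical point $\vec{\mathcal{W}}=\vec{0}$, so any admissible symmetry yields $\nabla_j V^j=0$ pointwise, and the job reduces to matching $V^j$ with the tensors announced in the theorem.

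For translation, I substitute $\delta\vec{\Phi}=\vec{a}$ with constant $\vec{a}\in\mathbb{R}^5$, giving $A^j=\vec{a}\cdot\nabla^j\vec{\Phi}$ and $\vec{B}=(\vec{a}\cdot\vec{n})\vec{n}$. Every term of $V^j$ is then linear in $\vec{a}$, and factoring yields $V^j=\vec{a}\cdot\vec{V}^j$; a direct term-by-term comparison confirms that this $\vec{V}^j$ coincides with the expression announced in the theorem. Since $\vec{a}$ is arbitrary, we conclude $\nabla_j\vec{V}^j=\vec{0}$.

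The dilation and rotation laws are then derived by bootstrapping on the translation law through the Leibniz identities
\begin{align}
\nabla_j(\vec{\Phi}\cdot\vec{V}^j)=\nabla_j\vec{\Phi}\cdot\vec{V}^j,\qquad \nabla_j(\vec{\Phi}\wedge\vec{V}^j)=\nabla_j\vec{\Phi}\wedge\vec{V}^j. \nonumber
\end{align}
For dilation it suffices to show $\nabla_j\vec{\Phi}\cdot\vec{V}^j=\Delta|\vec{H}|^2$. Using $\nabla_j\vec{\Phi}\cdot\vec{n}=0$, $\nabla_j\vec{\Phi}\cdot\nabla^i\vec{\Phi}=\delta^i_j$, the Weingarten formula $\nabla_i\vec{n}=-h_i{}^k\nabla_k\vec{\Phi}$, and the codimension-one reductions $\vec{h}_{ij}=h_{ij}\vec{n}$, $\vec{H}=H\vec{n}$, $h^j_j=4H$, the second and third lines of $\vec{V}^j$ contract to zero through internal cancellations of the type $2H^2|h|^2+2H^2|h|^2-4H^2|h|^2=0$ and $28H^4-28H^4=0$, while the first three terms of line one produce $4|\nabla H|^2-2|\nabla H|^2+2H\Delta H=\Delta|\vec{H}|^2$.

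For rotation the target identity is $\nabla_j\vec{\Phi}\wedge\vec{V}^j=-\nabla_j\vec{M}^j$. Every term of $\vec{V}^j$ whose coefficient is symmetric in its free tangential indices --- all pieces proportional to $\nabla^j\vec{\Phi}$, together with $-2(\nabla^j H\nabla_i H)\nabla^i\vec{\Phi}$ and $\tfrac12(h^j_i\Delta H)\nabla^i\vec{\Phi}$ --- vanishes on wedging with $\nabla_j\vec{\Phi}$, because $\nabla_l\vec{\Phi}\wedge\nabla_j\vec{\Phi}$ is antisymmetric in $l,j$ while the coefficient is symmetric. What remains are the normal pieces of $\vec{V}^j$, and expanding them via the product rule together with Weingarten and the auxiliary identity $\nabla_j(\vec{n}\wedge\nabla^j\vec{\Phi})=0$ --- which itself follows from the symmetry of $h_{jk}$ and from $\Delta\vec{\Phi}=4\vec{H}=4H\vec{n}$ --- reproduces the four summands of $-\nabla_j\vec{M}^j$ term by term. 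The main obstacle is the bookkeeping in this last step: many mixed normal/tangential cross terms must be disentangled consistently via Gauss--Weingarten, and several cancellations depend crucially on the codimension-one reduction $\vec{H}=H\vec{n}$ and the symmetry of $h_{ij}$, with no further geometric input required.
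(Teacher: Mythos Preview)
Your proposal is correct, but for the dilation and rotation laws you take a different route from the paper. The paper proves all three laws uniformly by Noether's theorem: for each of the three symmetries (translation $\delta\vec{\Phi}=\vec{a}$, dilation $\delta\vec{\Phi}=\lambda\vec{\Phi}$, rotation $\delta\vec{\Phi}=\star(\vec{b}\wedge\vec{\Phi})$) it computes how the building blocks $\vec{U}\cdot\vec{B}$, $\pi_{\vec{n}}\vec{U}\cdot\nabla_j\vec{B}$ and $\vec{U}\cdot\Delta_\perp\vec{B}$ transform, and then reads off that $V^j$ equals respectively $\vec{a}\cdot\vec{V}^j$, $\lambda(\vec{\Phi}\cdot\vec{V}^j-\nabla^j|\vec{H}|^2)$, and $\vec{b}\cdot\star(\vec{\Phi}\wedge\vec{V}^j+\vec{M}^j)$. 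You instead apply Noether only for translation, and then bootstrap the other two identities purely algebraically by contracting $\vec{V}^j$ with $\nabla_j\vec{\Phi}$ via the dot and wedge products. Your approach is more elementary and arguably shorter, since it avoids recomputing $V^j$ for each variation; the paper's approach has the conceptual advantage of exhibiting each conservation law as the Noether current of its own symmetry, which is what the theorem statement actually asserts (``by the invariance \ldots\ under translation, dilation and rotation respectively''). Note also that the identity $\nabla_j\vec{\Phi}\cdot\vec{V}^j=\Delta|\vec{H}|^2$ you use is stated (without proof) in the paper immediately after the translation computation.
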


\begin{proof}

$\empty$\\
\textbf{Translation.}
Consider a translation $\delta\vec{\Phi}=\vec{a}$, where $\vec{a}$ is a constant vector in $\mathbb{R}^5$. Then 
$$A_j=\vec{a}\cdot\nabla_j\vec{\Phi}\quad\mbox{and}\quad \vec{B}=\pi_{\vec{n}}\vec{a}.$$

\noindent
If $\vec{U}$ is any vector, then it holds
$$\vec{U}\cdot\vec{B}=\vec{a}\cdot\pi_{\vec{n}}\vec{U}$$
and
\begin{align} \pi_{\vec{n}}\vec{U}\cdot\nabla_j\vec{B}&=\nabla_j(\vec{B}\cdot\pi_{\vec{n}}\vec{U}) -\vec{B}\cdot \nabla_j\pi_{\vec{n}}\vec{U}   \nonumber
\\&= \vec{a}\cdot \nabla_j\pi_{\vec{n}}\vec{U} -\vec{a}\cdot\pi_{\vec{n}}\nabla_j\pi_{\vec{n}} \vec{U} \nonumber
\\&=\vec{a}\cdot\pi_{T}\nabla_j\pi_{\vec{n}}\vec{U}. \nonumber
\end{align}
With the help of the latter, we have 
\begin{align}
\vec{U}\cdot\Delta_{\perp}\vec{B}&= \pi_{\vec{n}}\vec{U}\cdot\nabla^i\pi_{\vec{n}}\nabla_i\vec{B}=\nabla^i(\pi_{\vec{n}}\vec{U}\cdot\nabla_i\vec{B})-(\pi_{\vec{n}}\nabla^i\pi_{\vec{n}}\vec{U})\cdot \nabla_i\vec{B}   \nonumber
\\&= \vec{a}\cdot \left[ \nabla^i\pi_{T}\nabla_i\pi_{\vec{n}}\vec{U}-\pi_{T}\nabla_i\pi_{\vec{n}}\nabla^i\pi_{\vec{n}}\vec{U}     \right].    \nonumber
\end{align}
Hence
$$V^j=\vec{a}\cdot\vec{V}^j,$$
where
\begin{align}
\vec{V}^j&= | \nabla H  |^2\nabla^j\vec{\Phi}+\frac{1}{2} (|h|^2 \nabla^j H)\vec{n}+ \frac{1}{2}(\nabla^j\Delta H)\vec{n}-\frac{1}{2}\pi_{T}\nabla^j(\Delta H\vec{n}) \nonumber
\\ &\quad+\frac{1}{2} \nabla^i\pi_{T}\nabla_i(\nabla^j H\vec{n})-\frac{1}{2}\pi_{T}\nabla_i\pi_{\vec{n}}\nabla^i\pi_{\vec{n}}\nabla^j\vec{H}  -2\pi_{T}\nabla_i(H^2\vec{h}^{ij}) \nonumber
\\&\quad 
+ 2\pi_{\vec{n}} \nabla_i(H^2\vec{h}^{ij})-\frac{1}{2}\pi_{T}\nabla^j(|h|^2\vec{H}) \nonumber
+\frac{1}{2}\pi_{\vec{n}}\nabla^j(|h|^2\vec{H})-H^2h^2 \nabla^j\vec{\Phi}              \nonumber
\\&\quad+7|\vec{H}|^2\nabla^j\vec{\Phi}+7\pi_{T}\nabla^j(|\vec{H}|^2\vec{H}) -7\pi_{\vec{n}}\nabla^j(|\vec{H}|^2\vec{H})
 \nonumber
\\&=|\nabla H|^2\nabla^j\vec{\Phi}-2(\nabla^jH \nabla_iH)\nabla^i\vec{\Phi}+\frac{1}{2}(h^j_i\Delta H)\nabla^i\vec{\Phi}  +\frac{1}{2} (\nabla^j\Delta H)\vec{n} \nonumber
\\&\quad -2\nabla_i(H^2\vec{h}^{ij})+4\nabla_i(H^2h^{ij})\vec{n}     -\frac{1}{2}\nabla^j(|h|^2\vec{H}) -H^2h^2\nabla^j\vec{\Phi}      \nonumber \\&\quad+\nabla^j(|h|^2H)\vec{n}+7|\vec{H}|^4\nabla^j\vec{\Phi}+7\nabla^j(|\vec{H}|^2\vec{H})-14\nabla^j(|\vec{H}|^2H)\vec{n}. \nonumber
\end{align}

\noindent
We note that 
$$\nabla_j\vec{\Phi}\cdot \vec{V}^j=\Delta|\vec{H}|^2.$$

\vskip 3mm
\textbf{Dilation.}
Next, we consider a dilation $\delta\vec{\Phi}=\lambda\vec{\Phi}$,  for some constant $\lambda\in\mathbb{R}$. Clearly,
$$A_j=\lambda\vec{\Phi}\cdot\nabla_j\vec{\Phi}\quad\mbox{and}\quad\vec{B}=\lambda\pi_{\vec{n}}\vec{\Phi}.$$

\noindent
If $\vec{U}$ is any vector, then it holds
$$\vec{U}\cdot\vec{B}=\lambda\vec{\Phi}\cdot\pi_{\vec{n}}\vec{U}$$
and
\begin{align}
\quad\pi_{\vec{n}}\vec{U}\cdot\nabla_j\vec{B}&=  \nabla_j(\vec{B}\cdot\pi_{\vec{n}}\vec{U}) -\vec{B}\cdot \nabla_j\pi_{\vec{n}}\vec{U}             \nonumber
\\&= \lambda\left(\vec{\Phi}\cdot \nabla_j\pi_{\vec{n}}\vec{U}-\pi_{\vec{n}}\nabla_j\pi_{\vec{n}}\vec{U}  \right)   \nonumber
\\&=\lambda \vec{\Phi}\cdot\pi_{T}\nabla_j\pi_{\vec{n}}\vec{U}.  \nonumber
\end{align}
Hence
\begin{align}
\vec{U}\cdot \Delta_{\perp}\vec{B}&= \pi_{\vec{n}}\vec{U}\cdot\nabla^i\pi_{\vec{n}}\nabla_i\vec{B}=\nabla^i(\pi_{\vec{n}}\vec{U}\cdot \nabla_{i}\vec{B})-(\pi_{\vec{n}}\nabla^i\pi_{\vec{n}}\vec{U})\cdot \nabla_i\vec{B}   \nonumber
\\&= \nabla^i\left[\lambda \vec{\Phi}\cdot \pi_{T}\nabla_i\pi_{\vec{n}}\vec{U}  \right]-\lambda\vec{\Phi}\cdot \pi_{T}\nabla_{i}\pi_{\vec{n}}\nabla^i\pi_{\vec{n}}\vec{U}  \nonumber
\\&= \lambda\vec{\Phi}\cdot \left[ \nabla^i\pi_{T}\nabla_i\pi_{\vec{n}}\vec{U}-\pi_{T}\nabla_i\pi_{\vec{n}}\nabla^i\pi_{\vec{n}}\vec{U}  \right]-4\lambda\vec{H}\cdot \vec{U}   \nonumber
\end{align}
 From this and the computation above for  translation, we find
$$V^j=\lambda\vec{\Phi}\cdot\vec{V}^j-2\lambda\vec{H}\cdot\nabla^j\vec{H}=\lambda\left[ \vec{\Phi}\cdot\vec{V}^j-\nabla^j|\vec{H}|^2  \right].$$

\vskip 3mm
\noindent\textbf{Rotation.} We now consider an infinitesimal rotation given by\footnote{Here, we use $\star$ to denote the Hodge star operator on the ambient space. } $$\delta\vec{\Phi}=\star(\vec{b}\wedge\vec{\Phi}) \quad\mbox{for some constant}\quad \vec{b}\in\Lambda^{3}(\mathbb{R}^5).$$
Clearly,
$$A_j=\vec{b}\cdot\star(\vec{\Phi}\wedge\nabla_j\vec{\Phi})\quad\mbox{and}\quad \vec{B}=(\vec{b}\cdot\star(\vec{\Phi}\wedge\vec{n}))\vec{n}.$$
If $\vec{U}$ is any vector, then it holds that
$$\vec{U}\cdot\vec{B}=\vec{b}\cdot\star(\vec{\Phi}\wedge\pi_{\vec{n}}\vec{U})$$
and
\begin{align}
\pi_{\vec{n}}\vec{U} \cdot \nabla_j\vec{B}&=\nabla_j(\vec{B}\cdot\pi_{\vec{n}}\vec{U}) -\vec{B}\cdot \nabla_j\pi_{\vec{n}} \vec{U}   \nonumber
\\ &=\vec{b}\cdot\star(\vec{\Phi}\wedge\pi_{T}\nabla_j\pi_{\vec{n}}\vec{U}+\nabla_j\vec{\Phi}\wedge\pi_{\vec{n}}\vec{U}). \nonumber\end{align}

\noindent
Hence 
\begin{align}
\vec{U}\cdot\Delta_{\perp}\vec{B}&=\pi_{\vec{n}}\vec{U}\cdot\nabla^i\pi_{\vec{n}}\nabla_i\vec{B}=\nabla^i(\pi_{\vec{n}}\vec{U} \cdot \nabla_i\vec{B})-(\pi_{\vec{n}}\nabla^i\pi_{\vec{n}}\vec{U} )\cdot \nabla_i\vec{B}    \nonumber
\\&=  \nabla^i\left(\vec{b}\cdot\star (\vec{\Phi}\wedge\pi_{T}\nabla_i\pi_{\vec{n}}\vec{U}+\nabla_i\vec{\Phi}\wedge\pi_{\vec{n}} \vec{U})  \right)         \nonumber \\&\quad\quad-\vec{b}\cdot\star\left(\vec{\Phi}\wedge\pi_{T}\nabla_i\pi_{\vec{n}}\nabla^i\pi_{\vec{n}}\vec{U}+\nabla_i\vec{\Phi}\wedge\pi_{\vec{n}}\nabla^i\pi_{\vec{n}}\vec{U}\right)   \nonumber
\\&= \vec{b}\cdot\star \left[\vec{\Phi}\wedge\left(\nabla^i\pi_{T}\nabla_i\pi_{\vec{n}}\vec{U}-\pi_{T}\nabla_i\pi_{\vec{n}} \nabla^i\pi_{\vec{n}}\vec{U} \right) +\nabla^i\vec{\Phi}\wedge\pi_{T}\nabla_i\pi_{\vec{n}}\vec{U}                    \right.\nonumber
\\&\quad\quad\left. +\nabla^i(\nabla_i\vec{\Phi}\wedge\pi_{\vec{n}}\vec{U})
-\nabla_i\vec{\Phi}\wedge\pi_{\vec{n}}\nabla^i\pi_{\vec{n}}\vec{U} \right]  \nonumber
\\&=\vec{b}\cdot\star\left[\vec{\Phi}\wedge\left( \nabla^i\pi_{T}\nabla_i\pi_{\vec{n}}\vec{U}-\pi_{T}\nabla_i\pi_{\vec{n}} \nabla^i\pi_{\vec{n}}\vec{U} \right) +2\nabla^i\vec{\Phi}\wedge \pi_{T}\nabla_i\pi_{\vec{n}}\vec{U} \right]   \nonumber
\\&=\vec{b}\cdot\star\left[ \vec{\Phi}\wedge\left( \nabla^i\pi_{T}\nabla_i\pi_{\vec{n}}\vec{U}-\pi_{T}\nabla_i\pi_{\vec{n}}\nabla^i\pi_{\vec{n}}\vec{U} \right) \right].   \nonumber
\end{align}
From this and the computation above for translation, we find
\begin{align}
V^j&=\vec{b}\cdot\star\left[ \vec{\Phi}\wedge\vec{V}^j+ \frac{1}{2}(\Delta H)\vec{n}\wedge\nabla^j\vec{\Phi}       +2H^2\vec{h}^{ij}\wedge\nabla_i\vec{\Phi} \right. \nonumber
\\&\quad\quad\left.+\frac{1}{2}|h|^2\vec{H}\wedge\nabla^j\vec{\Phi} -7|\vec{H}|^2\vec{H}\wedge\nabla^j\vec{\Phi}\right] . \nonumber
\end{align}
\end{proof}

\subsection{Existence of potential 2-forms} \label{sekoko}

In this subsection, we present further conservation laws. We briefly clarify some of the notations used here.
Most of the quantities used are multivector-valued differential forms. The space $\Lambda^k(\mathbb{R}^4, \Lambda^p(\mathbb{R}^5))$ is the collection of all $k$-forms that act as $p$-vectors from $\mathbb{R}^4$ to $\mathbb{R}^5$. Thus an element of this space is both a $p$-vector and a $k$-form, or simply a $p$-vector-valued $k$-form. For $\vec{A}\in\Lambda^k(\mathbb{R}^4, \Lambda^p(\mathbb{R}^5))$ and $\vec{B}\in\Lambda^\ell(\mathbb{R}^4, \Lambda^q(\mathbb{R}^5))$, the product $\vec{A}\overset{\bullet}\wedge_4 \vec{B} \in \Lambda^{k+\ell}(\mathbb{R}^4, \Lambda^{p+q-2}(\mathbb{R}^5))$ represents the contraction between multivectors and wedge product between forms. Similar meaning is given to the products $\vec{A}\overset{\cdot}\wedge_4 \vec{B}$ and $\vec{A}\overset{\wedge}\wedge_4 \vec{B}$ where the upper and lower symbols denote operations between vectors and forms respectively. We will simply write $\vec{A}\wedge \vec{B}$ whenever the meaning is clear. For instance, when $\vec{A}$ is a 1-vector-valued 0-form and $\vec{B}$ is a $q$-vector valued $\ell$-form.

We recall the following useful result due to Henri Poincar\'e (see \cite{jost}).

\noindent
\begin{lem}[Poincar\'e lemma]
Let $\Omega\subset \mathbb{R}^4$ be an open ball. Let $\omega\in \Lambda^k(\Omega, \Lambda^p(\mathbb{R}^5))$ with $1\leq k\leq 4$ satisfy
$$d\omega=0.$$
Then $\omega=d\alpha$ for some $\alpha\in \Lambda^{k-1}(\Omega, \Lambda^p(\mathbb{R}^5))$. Similarly, if $0\leq k\leq 3$ and
$$d^\star \omega=0,$$
then $\omega=d^\star\alpha$ for some $\alpha\in \Lambda^{k+1}(\Omega, \Lambda^p(\mathbb{R}^5))$.
\end{lem}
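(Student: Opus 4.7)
The plan is to reduce the multivector-valued statement to the classical scalar Poincar\'e lemma, and then to derive the $d^\star$ case from the $d$ case via Hodge duality.

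For the $d$-statement, I fix a basis $\{\vec{e}_I\}_I$ of $\Lambda^p(\mathbb{R}^5)$ and expand $\omega = \sum_I \omega^I \vec{e}_I$, where each coefficient $\omega^I \in \Lambda^k(\Omega)$ is a scalar-valued $k$-form. Since $d$ acts trivially on the constant basis vectors $\vec{e}_I$, the closure condition $d\omega = 0$ decouples into the scalar system $d\omega^I = 0$ for every $I$. On the open ball $\Omega \subset \mathbb{R}^4$, the classical Poincar\'e lemma applies: the cone homotopy operator $K$, obtained by integrating along rays emanating from the centre of $\Omega$, satisfies $dK + Kd = \mathrm{id}$ on $k$-forms with $1 \leq k \leq 4$. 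Setting $\alpha^I := K\omega^I$ then yields $d\alpha^I = \omega^I$, and assembling $\alpha := \sum_I \alpha^I \vec{e}_I$ produces the desired $(k-1)$-form primitive in $\Lambda^{k-1}(\Omega, \Lambda^p(\mathbb{R}^5))$.

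For the $d^\star$-statement, I apply the Hodge star $\star$ associated with the flat Euclidean metric on $\mathbb{R}^4$. On a flat $4$-manifold the identity $d^\star = -\star d \star$ holds on all $k$-forms, so the hypothesis $d^\star \omega = 0$ rewrites as $d(\star\omega) = 0$. Now $\star\omega \in \Lambda^{4-k}(\Omega, \Lambda^p(\mathbb{R}^5))$, and since $0 \leq k \leq 3$ one has $1 \leq 4-k \leq 4$, so the first part supplies $\beta \in \Lambda^{3-k}(\Omega, \Lambda^p(\mathbb{R}^5))$ with $d\beta = \star\omega$. Applying $\star$ once more and using the involution $\star\star = \pm\,\mathrm{id}$ gives $\omega = \pm \star d\beta = \mp d^\star(\star\beta)$, whence $\alpha := \mp \star\beta$ is the required $(k+1)$-form with $d^\star\alpha = \omega$.

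The only genuinely delicate point is the careful bookkeeping of degree-dependent signs in $\star\star$ and in $d^\star = \pm \star d \star$, but these are purely algebraic and do not influence the existence conclusion. No analytic obstacle arises, because the cone homotopy operator is a bounded integral operator on the convex ball $\Omega$ and inherits the regularity of its argument; in particular the construction extends to $\omega$ with only limited Sobolev regularity, which is what will be used later in the paper when the lemma is applied to quantities such as $\vec{V}$ and $\vec{L}_0$.
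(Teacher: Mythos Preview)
Your argument is correct and entirely standard. The paper does not actually prove this lemma: it is merely recalled as a classical result, with a reference to Jost's textbook, so there is no ``paper's own proof'' to compare against.

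One small contextual remark: you invoke the Hodge star associated with the flat Euclidean metric on $\mathbb{R}^4$, whereas in the paper the codifferential $d^\star$ is taken with respect to the induced metric $g$. This does not affect your reasoning, since the identity $d^\star = \pm \star d \star$ holds for any Riemannian metric and $\star$ is an isomorphism $\Lambda^k \to \Lambda^{4-k}$ regardless of the choice of metric; but for consistency with how the lemma is applied later (to $\vec{V}$, $\vec{L}_0$, etc.), you may wish to phrase the duality step in terms of the metric $g$ rather than the flat one.
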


\begin{prop}\label{coro}
Let $\vec{\Phi}:\Sigma \rightarrow \mathbb{R}^5$ be a critical point of $\mathcal{E}(\Sigma)$ and let $\vec{V}$ be as defined in Theorem \ref{cons-law}.
Then it holds that
\begin{gather}
d^{\star} \vec{V}=\vec{0}, \nonumber
\\ d^\star (\vec{\Phi}\cdot\vec{V}-d|\vec{H}|^2)=0  \nonumber
 \\ \mbox{and}  \quad d^\star(\vec{\Phi}\wedge\vec{V} +(\vec{J}+2d^\star(|\vec{H}|^2d\vec{\Phi}))\wedge d\vec{\Phi})=\vec{0}  \nonumber
\end{gather}
where $\vec{J}:=\frac{1}{2}(\Delta H)\vec{n}+\frac{1}{2}\vec{H}|\vec{h}|^{2}-7|\vec{H}|^2\vec{H}$.
Also, there exist two-forms $\vec{L}_0,\vec{L}\in\Lambda^2(\mathbb{R}^4, \Lambda^1(\mathbb{R}^5))$, $S_0\in\Lambda^2(\mathbb{R}^4, \Lambda^0(\mathbb{R}^5))$ and $\vec{R}_0\in\Lambda^2(\mathbb{R}^4, \Lambda^2(\mathbb{R}^5))$ such that
\begin{gather}
\vec{V}= d^\star\vec{L}_0 \nonumber\\
dS_0= \vec{L}\overset{\cdot}\wedge_4 d\vec{\Phi} \nonumber\\
d\vec{R}_0=\vec{L}\overset{\wedge}\wedge_4 d\vec{\Phi} +\star d\vec{u} -(\vec{J} +8 |\vec{H}|^2\vec{H})\wedge \star d\vec{\Phi} \nonumber
\end{gather}
where $\vec{L}_0$ and $\vec{L}$ are related by $\vec{L}=\star(\vec{L}_0-d|\vec{H}|^2\wedge_4 d\vec{\Phi})$ and $\vec{u}$ satisfies the Hodge decomposition
\begin{gather}
\frac{5}{3}|\vec{H}|^2d^\star \vec{\eta}= d\vec{u}+d^\star \vec{v},\nonumber\\
 \quad \vec{u}\in\Lambda^0(\mathbb{R}^4, \Lambda^2(\mathbb{R}^5)), \vec{v}\in\Lambda^2(\mathbb{R}^4, \Lambda^2(\mathbb{R}^5)), \vec{\eta}:=d\vec{\Phi}\overset{\wedge}\wedge_4 d\vec{\Phi}\in\Lambda^2(\mathbb{R}^4, \Lambda^2(\mathbb{R}^5)).  \nonumber
\end{gather}
\end{prop}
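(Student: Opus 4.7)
The three divergence identities are the conservation laws of Theorem \ref{cons-law} translated into the differential-form language, via $d^\star\omega=-\nabla^j\omega_j$ on 1-forms. The first is immediate. The second uses $\Delta|\vec{H}|^2=-d^\star d|\vec{H}|^2$ inside $\nabla_j(\vec{\Phi}\cdot\vec{V}^j-\nabla^j|\vec{H}|^2)=0$. The third requires recognising $(\vec{J}+2d^\star(|\vec{H}|^2 d\vec{\Phi}))\wedge d\vec{\Phi}$ as the $\vec{M}^j$ term from Theorem \ref{cons-law}: expanding $d^\star(|\vec{H}|^2 d\vec{\Phi})=-\nabla^i|\vec{H}|^2\,\nabla_i\vec{\Phi}-4|\vec{H}|^2\vec{H}$ (using $\Delta\vec{\Phi}=4\vec{H}$) produces both the $2(Hh^{ij})\vec{H}\wedge\nabla_i\vec{\Phi}$ piece and the correct coefficient in front of $|\vec{H}|^2\vec{H}\wedge d\vec{\Phi}$.

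The four potentials are then obtained by three successive invocations of the Poincar\'e lemma. Since $d^\star\vec{V}=\vec{0}$ and $\Omega$ may be taken as a ball, the co-closed version of the lemma yields $\vec{L}_0\in\Lambda^2(\mathbb{R}^4,\Lambda^1(\mathbb{R}^5))$ with $\vec{V}=d^\star\vec{L}_0$. One then simply \emph{defines} $\vec{L}:=\star(\vec{L}_0-d|\vec{H}|^2\wedge d\vec{\Phi})$, so the stated relation between $\vec{L}$ and $\vec{L}_0$ is automatic.

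For $S_0$: Hodge-dualising the dilation identity shows that $\star(\vec{\Phi}\cdot\vec{V}-d|\vec{H}|^2)$ is a closed scalar-valued 3-form. Substituting $\vec{V}=d^\star\vec{L}_0$, using the dimension-four identity $\star d^\star\vec{L}_0=\pm d\star\vec{L}_0$, and applying the Leibniz rule $d(\vec{\Phi}\cdot\star\vec{L}_0)=d\vec{\Phi}\overset{\cdot}\wedge_4\star\vec{L}_0+\vec{\Phi}\cdot d\star\vec{L}_0$ rewrites this 3-form as $\vec{L}\overset{\cdot}\wedge_4 d\vec{\Phi}$ modulo an exact 3-form. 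The exact correction is absorbed into $S_0$, and the Poincar\'e lemma for closed 3-forms produces $S_0\in\Lambda^2(\mathbb{R}^4,\Lambda^0(\mathbb{R}^5))$ with $dS_0=\vec{L}\overset{\cdot}\wedge_4 d\vec{\Phi}$.

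For $\vec{R}_0$: the same strategy applied to the rotation identity gives a closed 2-vector-valued 3-form which, after substituting $\vec{V}=d^\star\vec{L}_0$ and regrouping the normal and tangential components via $\pi_{\vec{n}}$ and $\pi_T$, takes the shape $\vec{L}\overset{\wedge}\wedge_4 d\vec{\Phi}-(\vec{J}+8|\vec{H}|^2\vec{H})\wedge\star d\vec{\Phi}+\Theta$, where $\Theta$ collects all residual pieces. The precise identification of $\Theta$ as $\frac{5}{3}|\vec{H}|^2 d^\star\vec{\eta}$ with $\vec{\eta}=d\vec{\Phi}\overset{\wedge}\wedge_4 d\vec{\Phi}$ follows from the same Leibniz/Hodge-star manipulations as for $S_0$, combined with the normal-bundle analogues of the computations in the proof of Theorem \ref{cons-law}. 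Its Hodge decomposition $\frac{5}{3}|\vec{H}|^2 d^\star\vec{\eta}=d\vec{u}+d^\star\vec{v}$ introduces the auxiliary 0-form $\vec{u}$: the $\star d\vec{u}$ piece is what closes up the identity (modulo a further exact 3-form absorbed into $\vec{R}_0$), while $d^\star\vec{v}$ contributes nothing to the 3-form in question. A final appeal to Poincar\'e yields $\vec{R}_0$.

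The principal obstacle will be the third step: unwinding $\vec{\Phi}\wedge d^\star\vec{L}_0$ and regrouping normal/tangential contributions so that the residual obstruction to exactness is identified \emph{exactly} as $\frac{5}{3}|\vec{H}|^2 d^\star\vec{\eta}$. It is this identification that pins down both the coefficient $8$ in $8|\vec{H}|^2\vec{H}$ and the factor $\frac{5}{3}$ in the Hodge decomposition defining $\vec{u}$, and it is the only place where the specific algebra of the $\mathcal{E}(\Sigma)$-current --- as opposed to generic Noether bookkeeping --- actually enters.
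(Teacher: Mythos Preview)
Your plan is correct and mirrors the paper's own proof: Poincar\'e for $\vec{L}_0$, the Leibniz identity $\vec{\Phi}\cdot d^\star\vec{L}_0=d^\star(\vec{L}_0\cdot\vec{\Phi})-\star((\star\vec{L}_0)\overset{\cdot}\wedge_4 d\vec{\Phi})$ (and its $\wedge$-analogue) to produce the closed 3-forms, and the Hodge decomposition of $\tfrac{5}{3}|\vec{H}|^2 d^\star\vec{\eta}$ to isolate $\star d\vec{u}$ before a final Poincar\'e. The only place your outline is schematic is exactly where you flag it: the coefficients $8$ and $\tfrac{5}{3}$ come from the two concrete identities $\star\big[(\star(d|\vec{H}|^2\wedge_4 d\vec{\Phi}))\overset{\wedge}\wedge_4 d\vec{\Phi}\big]=-\tfrac{1}{3}d^\star(|\vec{H}|^2\vec{\eta})+\tfrac{1}{3}|\vec{H}|^2 d^\star\vec{\eta}$ and $d^\star(|\vec{H}|^2 d\vec{\Phi})\wedge d\vec{\Phi}=4|\vec{H}|^2\vec{H}\wedge d\vec{\Phi}+d^\star(|\vec{H}|^2\vec{\eta})-|\vec{H}|^2 d^\star\vec{\eta}$, which are dimension-4 combinatorics rather than anything normal-bundle specific.
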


\begin{proof}
From Theorem \ref{cons-law}, we have $\nabla_j\vec{V}^j=\vec{0}.$ In other words, $d^\star\vec{V}=\vec{0}$ where $\vec{V}\in \Lambda^1(\mathbb{R}^4, \Lambda^1(\mathbb{R}^5))$. By the Poincar\'e lemma, there exists some $\vec{L}_0\in\Lambda^2(\mathbb{R}^4, \Lambda^1(\mathbb{R}^5))$ such that 
\begin{gather}
\vec{V}=d^\star \vec{L}_0          \label{lqw}
\end{gather}
with the choice
$$d\vec{L}_0=\vec{0}.$$
Also, from Theorem \ref{cons-law}, the invariance of $\mathcal{E}(\Sigma)$ under dilation implies that  
\begin{gather}
d^\star(\vec{\Phi}\cdot\vec{V}-d|\vec{H}|^2)=\nabla_j(\vec{V}^j\cdot\vec{\Phi}-\nabla^j|\vec{H}|^2)=0.        \label{lqwer}
\end{gather}
Note that
\begin{align}
\vec{V}\cdot\vec{\Phi}&= d^\star\vec{L}_0\cdot\vec{\Phi}= \star d\star \vec{L}_0\cdot\vec{\Phi}= \star\left( d(\star\vec{L}_0\cdot\vec{\Phi})-(\star\vec{L}_0)\overset{\cdot}\wedge_4 d\vec{\Phi}  \right)  \nonumber
\\&=d^\star(\vec{L}_0\cdot\vec{\Phi})  -\star((\star \vec{L}_0)\overset{\cdot}\wedge_4 d\vec{\Phi})   \nonumber
\end{align}
so that \eqref{lqwer} becomes
\begin{gather}
d^\star\left[ \star((\star \vec{L}_0)\overset{\cdot}\wedge_4 d\vec{\Phi})  +d|\vec{H}|^2  \right]=0   \nonumber
\end{gather}
or equivalently
\begin{gather}
d\left[(\star \vec{L}_0)\overset{\cdot}\wedge_4 d\vec{\Phi} -\star d|\vec{H}|^2     \right]=0.          \nonumber
\end{gather}

\noindent
Hence, there exists $S_0\in \Lambda^2(\mathbb{R}^4, \Lambda^0(\mathbb{R}^5))$ such that
\begin{gather}
dS_0=(\star \vec{L}_0)\overset{\cdot}\wedge_4 d\vec{\Phi} -\star d|\vec{H}|^2.  \nonumber
\end{gather}

\noindent
Further computations show that
\begin{align}
dS_0&=\frac{1}{6} (\star\vec{L}_0)_{[kl}\cdot \nabla_{m]}\vec{\Phi} \,\,\,dx^{k}\wedge_4 dx^l \wedge_4 dx^m -\frac{1}{6}\epsilon_{iklm}\nabla^i|\vec{H}|^2 \,\,\,dx^{k}\wedge_4 dx^l \wedge_4 dx^m   \nonumber
\\&=\frac{1}{6}\epsilon_{ij[kl}\nabla_{m]}\vec{\Phi}\cdot\left( \frac{1}{2}\vec{L}_0^{ij}-\frac{1}{2}\nabla^{[i}|\vec{H}|^2\nabla^{j]}\vec{\Phi} \right)  \,\,\,dx^{k}\wedge_4 dx^l \wedge_4 dx^m  \nonumber
\\&=\star(\vec{L}_0-d|\vec{H}|^2\wedge_4 d\vec{\Phi})\overset{\cdot}\wedge_4 d\vec{\Phi} .   \nonumber
\end{align}
By setting 
$$\vec{L}:=\star(\vec{L}_0-d|\vec{H}|^2\wedge_4 d\vec{\Phi})$$
we find that
\begin{gather}
dS_0=\vec{L}\overset{\cdot}\wedge_4d\vec{\Phi}.  \nonumber
\end{gather}
Note that $d^\star \vec{L}=\star d\vec{L}_0=\vec{0}.$

\noindent
Now, using $\nabla_j\vec{V}^j=\vec{0}$ we have
\begin{align}
\vec{V}^j\wedge\nabla_j\vec{\Phi}&=\nabla_j(\vec{J}\wedge\nabla^j\vec{\Phi} +2|\vec{H}|^2\vec{h}^{ij}\wedge\nabla_i\vec{\Phi})   \nonumber\\
&= \nabla_j(\vec{J}\wedge \nabla^j\vec{\Phi} +2\nabla^i(|\vec{H}|^2\nabla_i\vec{\Phi})\wedge\nabla^j\vec{\Phi})  \nonumber
\end{align}
so that
\begin{gather}
\nabla_j\left( \vec{V}^j\wedge\vec{\Phi}-(\vec{J}+2\nabla^i(|\vec{H}|^2\nabla_i\vec{\Phi})\wedge\nabla^j\vec{\Phi} )\right)=\vec{0}  \nonumber
\end{gather}
or equivalently
\begin{align}
d^\star\left( \vec{V}\wedge\vec{\Phi}-\vec{J}\wedge d\vec{\Phi} -2d^\star(|\vec{H}|^2d\vec{\Phi})\wedge d\vec{\Phi}  \right)=\vec{0} . \label{becc}
\end{align}
Observe that
\begin{align}
\vec{V}\wedge \vec{\Phi}&= d^\star\vec{L}_0\wedge \vec{\Phi}=\star d\star \vec{L}_0\wedge \vec{\Phi}    \nonumber
=       \star\left[d(\star \vec{L}_0\wedge \vec{\Phi})-(\star\vec{L}_0)\overset{\wedge}\wedge_4 d\vec{\Phi}  \right]    \nonumber
\\&=d^\star(\vec{L}_0\wedge\vec{\Phi})-\star\left[(\star\vec{L}_0)\overset{\wedge}\wedge_4 d\vec{\Phi}\right]    \nonumber
\\&=d^\star(\vec{L}_0\wedge\vec{\Phi})- \star(\vec{L}\overset{\wedge}\wedge_4 d\vec{\Phi})  -\star\left[\left(\star(d|\vec{H}|^2\wedge_4 d\vec{\Phi})  \right) \overset{\wedge}\wedge_4 d\vec{\Phi} \right]  .    \label{gracee}
\end{align}

\noindent
We will define $\vec{\eta}\in\Lambda^2(\mathbb{R}^4, \Lambda^2(\mathbb{R}^5))$ as follows
\begin{align}
\vec{\eta}=\frac{1}{2}\vec{\eta}_{ij} \,\,\,dx^{i}\wedge_4 dx^j:= \frac{1}{2} \nabla_i\vec{\Phi}\wedge\nabla_j\vec{\Phi}\,\,\, dx^{i}\wedge_4 dx^j.
\end{align}

\noindent
Focusing on \eqref{gracee}, we see that
\begin{align}
\star\left[\left(\star(d|\vec{H}|^2\wedge_4 d\vec{\Phi})  \right) \overset{\wedge}\wedge_4 d\vec{\Phi} \right]  &= \frac{1}{6}\epsilon^{klmr}\left( \frac{1}{6}\delta^{\alpha\beta\gamma}_{klm}\epsilon_{ij\alpha\beta}\frac{1}{2} \nabla^{[i}|\vec{H}|^2\nabla^{j]}\vec{\Phi} \right)\wedge \nabla_\gamma\vec{\Phi} \,\,g_{rs}\,\,\, dx^s  \nonumber
\\&= \frac{1}{6}\delta_{ij}^{mr} \nabla^{[i}|\vec{H}|^2\nabla^{j]}\vec{\Phi}\wedge \nabla_m\vec{\Phi}  g_{rs} dx^s     \nonumber
  \\&=-\frac{1}{3} \nabla_m|\vec{H}|^2 \nabla^m\vec{\Phi}\wedge\nabla_r\vec{\Phi} dx^r  \nonumber
\\&=-\frac{1}{3}\nabla_m(|\vec{H}|^2\nabla^m\vec{\Phi}\wedge \nabla_r\vec{\Phi}) dx^r +\frac{1}{3} |\vec{H}|^2 \nabla_m(\nabla^m\vec{\Phi}\wedge\nabla_r\vec{\Phi}) dx^r  \nonumber
\\&=-\frac{1}{3} d^\star(|\vec{H}|^2\vec{\eta})+\frac{1}{3}|\vec{H}|^2 d^\star\vec{\eta}  .  \nonumber
\end{align}
Thus, it follows that
\begin{align}
\vec{V}\wedge\vec{\Phi}=  d^\star(\vec{L}_0\wedge\vec{\Phi})- \star(\vec{L}\overset{\wedge}\wedge_4 d\vec{\Phi})     +\frac{1}{3} d^\star(|\vec{H}|^2\vec{\eta})-\frac{1}{3}|\vec{H}|^2 d^\star\vec{\eta}   \nonumber
\end{align}
and so \eqref{becc} becomes
\begin{align}
d^\star\left[-\star(\vec{L}\overset{\wedge}\wedge_4 d\vec{\Phi})  -\frac{1}{3}|\vec{H}|^2 d^\star\vec{\eta} -\vec{J}\wedge d\vec{\Phi} -2d^\star(|\vec{H}|^2d\vec{\Phi})\wedge d\vec{\Phi}      \right]=\vec{0}.   \label{sct}
\end{align}

\noindent
Observe that
\begin{align}
d^\star (|\vec{H}|^2 d\vec{\Phi})\wedge d\vec{\Phi}&= \nabla_j(|\vec{H}|^2\nabla^j\vec{\Phi})\wedge\nabla_i\vec{\Phi}  dx^i       \nonumber
\\&= 4|\vec{H}|^2\vec{H}\wedge\nabla_i\vec{\Phi} dx^i +\nabla_j|\vec{H}|^2\nabla^j\vec{\Phi}\wedge \nabla_i\vec{\Phi}  dx^i  \nonumber
\\&= 4|\vec{H}|^2\vec{H}\wedge\nabla_i\vec{\Phi} dx^i  +\nabla^j(|\vec{H}|^2\nabla_j\vec{\Phi}\wedge\nabla_i\vec{\Phi}) dx^i \nonumber
\\&\quad\quad-|\vec{H}|^2\nabla^j(\nabla_j\vec{\Phi}\wedge\nabla_i\vec{\Phi}) dx^i   \nonumber
\\&= 4|\vec{H}|^2\vec{H}\wedge d\vec{\Phi} +d^\star(|\vec{H}|^2\vec{\eta})  -|\vec{H}|^2d^\star\vec{\eta}   \label{scv}
\end{align}

\noindent
Now, substitute \eqref{scv} into \eqref{sct} to find
\begin{align}
d^\star\left[-\star(\vec{L}\overset{\wedge}\wedge_4 d\vec{\Phi})  +\frac{5}{3}|\vec{H}|^2 d^\star\vec{\eta} -(\vec{J} +8 |\vec{H}|^2\vec{H})\wedge d\vec{\Phi}     \right]=\vec{0}  .\nonumber
\end{align}

\noindent
We introduce the Hodge decomposition 
\begin{align}
\frac{5}{3} |\vec{H}|^2d^\star\vec{\eta}= d\vec{u} +d^\star \vec{v}  \nonumber
\end{align}
so that 
\begin{align}
d^\star\left[ -\star(\vec{L}\overset{\wedge}\wedge_4 d\vec{\Phi})  +d\vec{u}  -(\vec{J} +8 |\vec{H}|^2\vec{H})\wedge d\vec{\Phi}    \right] =\vec{0}   \nonumber
\end{align}
or equivalently
\begin{align}
d\left[\vec{L}\overset{\wedge}\wedge_4 d\vec{\Phi} +\star d\vec{u} -(\vec{J} +8 |\vec{H}|^2\vec{H})\wedge \star d\vec{\Phi}      \right]=\vec{0}.
\end{align}
Therefore there exists $\vec{R}_0\in\Lambda^2(\mathbb{R}^4, \Lambda^2(\mathbb{R}^5))$ such that
\begin{align}
d\vec{R}_0=\vec{L}\overset{\wedge}\wedge_4 d\vec{\Phi} +\star d\vec{u} -(\vec{J} +8 |\vec{H}|^2\vec{H})\wedge \star d\vec{\Phi}  .   \nonumber
\end{align}

\noindent
This completes the proof.

\end{proof}

\noindent
We now state an identity which holds on any 4-dimensional manifold $\Sigma$. This identity is obtained by varying 
\begin{align}
\mathcal{E}_{H^2}:= \int_{\Sigma_0\subset \Sigma}|\vec{H}|^2 d\textnormal{vol}_g.   \label{2DD}
\end{align}
on every small patch $\Sigma_0$ of $\Sigma$ and using the translation invariance of \eqref{2DD}. Note that we are not considering the critical points of the energy \eqref{2DD}. Hence the Willmore-type operator $\vec{\mathcal{W}}_{H^2}$ for \eqref{2DD} does not satisfy 
$$\vec{\mathcal{W}}_{H^2}=\vec{0}.$$

\begin{lem}  \cite{peterthesis}\label{rivvy}
Let $\vec{\Phi}:\Sigma\rightarrow \mathbb{R}^m$ be a smooth immersion of a 4-dimensional manifold $\Sigma$. Define 
\begin{align}
\vec{A}^s:= \nabla^s\vec{H}-2(|\vec{H}|^2g^{sk}-\vec{H}\cdot\vec{h}^{sk})\nabla_k\vec{\Phi}. \label{defA}
\end{align}
Then the following identity holds
\begin{align}
\nabla_s \vec{A}^s= \Delta_\perp\vec{H} +(\vec{H}\cdot \vec{h}_{ij})\vec{h}^{ij} -8|\vec{H}|^2\vec{H}.  \nonumber
\end{align}
\end{lem}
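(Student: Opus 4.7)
\bigskip

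\noindent\textbf{Proof proposal for Lemma \ref{rivvy}.}

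The plan is to compute $\nabla_s\vec{A}^s$ by brute-force application of the Leibniz rule and then show that every tangential contribution cancels, leaving only the three normal terms on the right-hand side. First I would expand
\begin{align}
\nabla_s\vec{A}^s &= \Delta\vec{H} - 2\nabla_s(|\vec{H}|^2)\nabla^s\vec{\Phi} - 2|\vec{H}|^2\nabla_s\nabla^s\vec{\Phi} \nonumber \\
&\quad + 2\nabla_s(\vec{H}\cdot\vec{h}^{sk})\nabla_k\vec{\Phi} + 2(\vec{H}\cdot\vec{h}^{sk})\nabla_s\nabla_k\vec{\Phi}, \nonumber
\end{align}
and invoke the Gauss formula $\nabla_s\nabla_k\vec{\Phi}=\vec{h}_{sk}$ together with the convention $g^{sk}\vec{h}_{sk}=4\vec{H}$. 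This already produces the $-8|\vec{H}|^2\vec{H}$ term and a clean $(\vec{H}\cdot\vec{h}^{sk})\vec{h}_{sk}$ contribution, while generating several tangential ``junk'' terms of the form $(\vec{H}\cdot\nabla^k\vec{H})\nabla_k\vec{\Phi}$ and $(\nabla^s\vec{H}\cdot\vec{h}_s^{\,k})\nabla_k\vec{\Phi}$ that need to be dealt with.

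Next I would handle the bare $\Delta\vec{H}$ term. Since $\vec{H}$ is normal, differentiating $\vec{H}\cdot\nabla_k\vec{\Phi}=0$ yields the tangential decomposition
\[
\nabla_s\vec{H}=\pi_{\vec{n}}\nabla_s\vec{H}-(\vec{H}\cdot\vec{h}_s^{\,k})\nabla_k\vec{\Phi}.
\]
Feeding this into $\Delta\vec{H}=\nabla^s\nabla_s\vec{H}$ and using the analogous tangential projection for the normal field $\pi_{\vec{n}}\nabla_s\vec{H}$ (namely $\pi_{T}\nabla^s\vec{N}=-(\vec{N}\cdot\vec{h}^{sk})\nabla_k\vec{\Phi}$ for any normal $\vec{N}$), I obtain
\[
\Delta\vec{H}=\Delta_\perp\vec{H}-2(\nabla^s\vec{H}\cdot\vec{h}_s^{\,k})\nabla_k\vec{\Phi}-4(\vec{H}\cdot\nabla^k\vec{H})\nabla_k\vec{\Phi}-(\vec{H}\cdot\vec{h}_{si})\vec{h}^{si}.
\]
The quadratic normal term here combines with the $+2(\vec{H}\cdot\vec{h}^{sk})\vec{h}_{sk}$ coming from the Leibniz expansion to leave precisely $+(\vec{H}\cdot\vec{h}_{ij})\vec{h}^{ij}$.

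The remaining tangential terms must then be shown to cancel, which is where the Codazzi--Mainardi identity enters. Contracting $\nabla_s\vec{h}_{ij}=\nabla_i\vec{h}_{sj}$ with $g^{si}$ and using metric compatibility gives $\nabla_s\vec{h}^{sk}=4\nabla^k\vec{H}$. Applying this to expand $2\nabla_s(\vec{H}\cdot\vec{h}^{sk})\nabla_k\vec{\Phi}$ produces exactly $+8(\vec{H}\cdot\nabla^k\vec{H})\nabla_k\vec{\Phi}+2(\nabla_s\vec{H}\cdot\vec{h}^{sk})\nabla_k\vec{\Phi}$, which together with the term $-4\nabla_s(|\vec{H}|^2)\nabla^s\vec{\Phi}/2=-4(\vec{H}\cdot\nabla^k\vec{H})\nabla_k\vec{\Phi}$ coming from $|\vec{H}|^2$ dilation and the $-2(\nabla^s\vec{H}\cdot\vec{h}_s^{\,k})\nabla_k\vec{\Phi}$ term inherited from $\Delta\vec{H}$, add up to zero in pairs.

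The main obstacle, as far as I can see, is purely bookkeeping: one must track two independent families of tangential remainders (those involving $\vec{H}\cdot\nabla\vec{H}$ and those involving $\nabla\vec{H}\cdot\vec{h}$), make sure the index placements agree after raising and lowering with $g$, and use Codazzi at exactly the right moment so that the coefficients match. There is no deeper analytical input required, as both sides are manifestly normal vectors once the cancellation is executed correctly.
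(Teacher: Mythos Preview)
Your direct computation is correct and the cancellations go through exactly as you describe. One small technical point: the identity $\nabla_s\vec{h}_{ij}=\nabla_i\vec{h}_{sj}$ you invoke as ``Codazzi'' is only true modulo a tangential curvature term (since commuting two covariant derivatives on $\nabla_j\vec{\Phi}$ produces $R^k_{\;jsi}\nabla_k\vec{\Phi}$); but you only ever use it after dotting with the normal vector $\vec{H}$, so this harmless imprecision does not affect the outcome.

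The paper does not give its own proof of this lemma; it cites \cite{peterthesis} and remarks just before the statement that the identity is obtained by varying the energy $\int_{\Sigma_0}|\vec{H}|^2\,d\textnormal{vol}_g$ on a patch and reading off the translation Noether current. That variational route identifies $\vec{A}^s$ as the stress-energy 1-form associated to translations, so that $\nabla_s\vec{A}^s$ automatically equals the Euler--Lagrange density of $|\vec{H}|^2$ in dimension $4$, which is the right-hand side. Your approach is more elementary and self-contained: it requires only the Gauss formula, the decomposition of $\nabla\vec{H}$ into normal and tangential parts, and the contracted Codazzi identity, without any appeal to the variational origin of $\vec{A}^s$. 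The variational viewpoint, on the other hand, explains \emph{why} such a divergence identity should exist at all and generalises immediately to other translation-invariant curvature energies.
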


\begin{rmk}
We have proved in Theorem \ref{coro} the existence of 2-forms $\vec{L}$, $S_0$ and $\vec{R}_0$. Observe that $S_0$ and $\vec{R}_0$ are defined in terms of $\vec{L}$ which depends on some geometric quantity $\vec{V}$. The following Lemma shows that $S_0$ and $\vec{R}_0$ can be directly linked back to some geometric quantities via the ``return equation".
\end{rmk}

\begin{lem}[The return equation]
The quantities $S_0$, $\vec{R}_0$, $\vec{u}$, $\vec{v}$ and $\vec{\eta}$  defined in Proposition \ref{coro} satisfy
\begin{gather}
\star (d\vec{R}_0\overset{\bullet}\wedge_4 d\vec{\Phi}) +\star(dS_0 \wedge_4 d\vec{\Phi}) +6 \star(\star d^\star \vec{v}\overset{\bullet}\wedge_4 d\vec{\Phi}) = 12d^\star\left[d\vec{H} -2|\vec{H}|^2d\vec{\Phi} -2\pi_{T} d\vec{H} \right] \nonumber
\\
\Delta_g \vec{u} =\frac{5}{3}\star(d|\vec{H}|^2\wedge_4 d(\star\vec{\eta})) \quad\quad \textnormal{and}\quad\quad \Delta\vec{v}=\frac{5}{3} d(|\vec{H}|^2\wedge_4 d^\star\vec{\eta})\nonumber
\end{gather}
where $\Delta_g$ is the negative Laplace-Beltrami operator and $\Delta$ is the Hodge Laplacian.
\end{lem}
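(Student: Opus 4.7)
The plan is to handle the three identities in two groups: the Laplacian equations for $\vec{u}$ and $\vec{v}$ follow from manipulating the Hodge decomposition $\frac{5}{3}|\vec{H}|^2 d^\star\vec{\eta}=d\vec{u}+d^\star\vec{v}$ of Proposition \ref{coro}, while the main return equation requires a substantial multivector computation. For $\Delta_g \vec{u}$, I would apply $d^\star$ to the Hodge decomposition: the right-hand side collapses since $d^\star d^\star\vec{v}=0$, and because $\vec{u}$ is a $0$-form one has $d^\star d\vec{u}=\Delta_g\vec{u}$. On the left, the graded Leibniz rule for $d^\star(f\omega)$ together with $d^\star d^\star\vec{\eta}=0$ and the four-dimensional identity $\star d^\star\vec{\eta}=\pm d\star\vec{\eta}$ yields the claimed $\tfrac{5}{3}\star(d|\vec{H}|^2\wedge_4 d(\star\vec{\eta}))$ up to sign conventions. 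For $\Delta\vec{v}$, apply $d$ instead: $d^2\vec{u}=0$ and the gauge $d\vec{v}=0$ implicit in the Hodge decomposition reduce $\Delta\vec{v}=(dd^\star+d^\star d)\vec{v}$ to $dd^\star\vec{v}=\tfrac{5}{3}d(|\vec{H}|^2 d^\star\vec{\eta})$.

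For the return equation, the approach is to substitute the identities
\[dS_0=\vec{L}\overset{\cdot}\wedge_4 d\vec{\Phi}, \qquad d\vec{R}_0=\vec{L}\overset{\wedge}\wedge_4 d\vec{\Phi}+\star d\vec{u}-(\vec{J}+8|\vec{H}|^2\vec{H})\wedge\star d\vec{\Phi}\]
from Proposition \ref{coro} into the left-hand side and expand each product pointwise using the algebraic rule $\vec{A}\bullet(\vec{B}\wedge\vec{C})=(\vec{A}\bullet\vec{B})\wedge\vec{C}+(-1)^{pq}(\vec{A}\bullet\vec{C})\wedge\vec{B}$. The crucial observation is that the combination $\star\bigl[(\vec{L}\overset{\wedge}\wedge_4 d\vec{\Phi})\overset{\bullet}\wedge_4 d\vec{\Phi}\bigr]+\star\bigl[(\vec{L}\overset{\cdot}\wedge_4 d\vec{\Phi})\wedge_4 d\vec{\Phi}\bigr]$ collapses, via this graded rule and the four-dimensional contraction identity $\star(\alpha\wedge\star\beta)=\langle\alpha,\beta\rangle$, to a quantity involving only the normal projection $\pi_{\vec{n}}\vec{L}$; combined with $\vec{L}=\star(\vec{L}_0-d|\vec{H}|^2\wedge_4 d\vec{\Phi})$ and $\vec{V}=d^\star\vec{L}_0$, this links the left-hand side back to the explicit stress-energy tensor $\vec{V}$ of Theorem \ref{cons-law}. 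I would then substitute $\star d\vec{u}=\tfrac{5}{3}\star(|\vec{H}|^2 d^\star\vec{\eta})-\star d^\star\vec{v}$ from the Hodge decomposition; the $-\star d^\star\vec{v}$ contribution partially offsets the explicit $+6\star(\star d^\star\vec{v}\overset{\bullet}\wedge_4 d\vec{\Phi})$ on the left, while the $|\vec{H}|^2 d^\star\vec{\eta}$ term is rewritten as a divergence involving $|\vec{H}|^2 d\vec{\Phi}$ via the Leibniz rule and the expansion of $d^\star\vec{\eta}$ using the Gauss relation $\nabla_j\nabla_i\vec{\Phi}=\vec{h}_{ij}$ and $\Delta\vec{\Phi}=4\vec{H}$.

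The remaining $(\vec{J}+8|\vec{H}|^2\vec{H})\wedge\star d\vec{\Phi}$ contribution expands by the same bullet rule; using the explicit form $\vec{J}=\tfrac{1}{2}(\Delta H)\vec{n}+\tfrac{1}{2}\vec{H}|\vec{h}|^2-7|\vec{H}|^2\vec{H}$ and invoking Lemma \ref{rivvy}, the combination $\Delta_\perp\vec{H}+(\vec{H}\cdot\vec{h}_{ij})\vec{h}^{ij}-8|\vec{H}|^2\vec{H}$ emerges and is immediately of divergence type $\nabla_s\vec{A}^s$ with $\vec{A}^s=\nabla^s\vec{H}-2(|\vec{H}|^2 g^{sk}-\vec{H}\cdot\vec{h}^{sk})\nabla_k\vec{\Phi}$, producing precisely the divergence $12d^\star[d\vec{H}-2|\vec{H}|^2 d\vec{\Phi}-2\pi_T d\vec{H}]$ on the right. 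The main obstacle is combinatorial bookkeeping: correctly tracking multiple Hodge stars in dimension four (where $\star\star=\pm\mathrm{id}$ depends on form degree), the graded signs in the Leibniz rules for $d$ and $d^\star$, the repeated normal/tangential projections, and most delicately producing the exact numerical coefficient $12$ and the two factors of $2$ on the right. The cleanest route is probably to work in an orthonormal frame adapted to the normal vector $\vec{n}$, reducing each $\star$-operation to a signed index permutation and making the cancellations between the $\vec{L}$-, $\vec{u}$-, and $\vec{J}$-derived contributions transparent.
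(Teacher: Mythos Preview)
Your treatment of the Laplacian identities for $\vec{u}$ and $\vec{v}$ is correct and matches the paper. The gap lies in how you propose to handle the $\vec{L}$-dependent terms in the main return equation. You claim the combination
\[
\star\bigl[(\vec{L}\overset{\wedge}\wedge_4 d\vec{\Phi})\overset{\bullet}\wedge_4 d\vec{\Phi}\bigr]+\star\bigl[(\vec{L}\overset{\cdot}\wedge_4 d\vec{\Phi})\wedge_4 d\vec{\Phi}\bigr]
\]
collapses to a quantity involving $\pi_{\vec{n}}\vec{L}$ and then links back to $\vec{V}=d^\star\vec{L}_0$. That route leads nowhere: $\vec{L}$ is only determined up to an exact form, and there is no manageable way to recover a clean divergence on the right from the highly intricate expression for $\vec{V}$ in Theorem~\ref{cons-law}.

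The correct mechanism is simpler: the two $\vec{L}$-contributions \emph{cancel}. In components, the $\vec{L}$-part of $\epsilon^{mijk}\nabla_i(\vec{R}_0)_{jk}\bullet\nabla_m\vec{\Phi}$ is $\epsilon^{mijk}(\vec{L}_{jk}\wedge\nabla_i\vec{\Phi})\bullet\nabla_m\vec{\Phi}$; expanding the interior multiplication and using the antisymmetry of $\epsilon$ kills the $g_{im}\vec{L}_{jk}$ piece and leaves $\epsilon^{mijk}(\vec{L}_{jk}\cdot\nabla_m\vec{\Phi})\nabla_i\vec{\Phi}$. But $\vec{L}_{[jk}\cdot\nabla_{m]}\vec{\Phi}$ is precisely the component of $dS_0$, so this term is exactly $-\epsilon^{mijk}\nabla_i(S_0)_{jk}\nabla_m\vec{\Phi}$ and annihilates the explicit $dS_0\wedge_4 d\vec{\Phi}$ term. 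With $\vec{L}$ eliminated, only the $\star d\vec{u}$ and $-(\vec{J}+8|\vec{H}|^2\vec{H})\wedge\star d\vec{\Phi}$ pieces of $d\vec{R}_0$ survive, contributing $6\nabla_i\vec{u}\bullet\nabla^i\vec{\Phi}$ and $24(\vec{J}+8|\vec{H}|^2\vec{H})$. Replacing $d\vec{u}$ via the Hodge decomposition produces the $d^\star\vec{v}$ term on the left together with an extra $-120|\vec{H}|^2\vec{H}$ (not a divergence on its own, contrary to what you suggest); the resulting $12\Delta_\perp\vec{H}+12(\vec{H}\cdot\vec{h}^{ij})\vec{h}_{ij}-96|\vec{H}|^2\vec{H}$ is then $12\nabla_s\vec{A}^s$ by Lemma~\ref{rivvy}, as you correctly anticipated.
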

\begin{proof}
We have
\begin{align}
&\quad\quad\epsilon^{mijk}\nabla_i(\vec{R}_0)_{jk}\bullet \nabla_m\vec{\Phi}  \nonumber
\\&= \epsilon^{mijk}\left[   \vec{L}_{jk}\wedge \nabla_i\vec{\Phi}+\epsilon_{rijk}\nabla^r\vec{u}-\epsilon_{rijk}(\vec{J}+8|\vec{H}|^2\vec{H})\wedge\nabla^r\vec{\Phi}                  \right]\bullet \nabla_m\vec{\Phi}    \nonumber
\\&=  \epsilon^{mijk}(\vec{L}_{jk}\cdot\nabla_m\vec{\Phi}) \nabla_i\vec{\Phi} +6\delta^m_r \nabla^r\vec{u}\bullet \nabla_m\vec{\Phi}-6\delta^m_r(\vec{J}+8|\vec{H}|^2\vec{H})(-\delta^r_m)         \nonumber
\\&= \epsilon^{mijk}\nabla_m (S_0)_{jk}\nabla_i\vec{\Phi}+6\nabla_i\vec{u}\bullet\nabla^i\vec{\Phi}+24(\vec{J}+8|\vec{H}|^2\vec{H}).  \nonumber
\end{align}

We have found
\begin{align}
\epsilon^{mijk}(\nabla_i(\vec{R}_0)_{jk}\bullet \nabla_m\vec{\Phi}+ \nabla_i (S_0)_{jk}\nabla_m\vec{\Phi})-6\nabla_i\vec{u}\bullet\nabla^i\vec{\Phi}&=  24(\vec{J}+8|\vec{H}|^2\vec{H}) \label{stella}
\end{align}

\noindent
Equation \eqref{stella} can be further simplified in order to re-introduce $v$.
\noindent
Observe that
\begin{align}
\nabla_i\vec{u}\bullet \nabla^i\vec{\Phi}&= \left[\frac{5}{3} |\vec{H}|^2\nabla_j(\nabla^j\vec{\Phi}\wedge \nabla_i\vec{\Phi}) -\nabla^j\vec{v}_{ji}    \right]\bullet\nabla^i\vec{\Phi}         \nonumber
\\&= \frac{5}{3}|\vec{H}|^2\left( 4\vec{H}\wedge\nabla_i\vec{\Phi}+\nabla^j\vec{\Phi}\wedge\vec{h}_{ji}  \right)\bullet\nabla^i\vec{\Phi}    -\nabla_j\vec{v}^{ji}\bullet\nabla_i\vec{\Phi}   \nonumber
\\&= -20|\vec{H}|^2\vec{H} -\nabla_j\vec{v}^{ji}\bullet\nabla_i\vec{\Phi} . \nonumber
\end{align}
Thus we find that
\begin{align}
\epsilon^{mijk}\nabla_i(\vec{R}_0)_{jk}\bullet \nabla_m\vec{\Phi}+ \epsilon^{mijk} \nabla_i (S_0)_{jk} \nabla_m\vec{\Phi}   +6\nabla_j\vec{v}^{ji}\bullet\nabla_i\vec{\Phi} = 24(\vec{J}+8|\vec{H}|^2\vec{H}) -120|\vec{H}|^2\vec{H}.   \nonumber
\end{align}
Since $\vec{J}:=\frac{1}{2}\Delta_{\perp}\vec{H}+\frac{1}{2}(\vec{H}\cdot\vec{h}^{ij})\vec{h}_{ij} +|\vec{H}|^2\vec{H} $, we have 
\begin{align}
24(\vec{J}+8|\vec{H}|^2\vec{H}) -120|\vec{H}|^2\vec{H}&\nonumber
 =12\Delta_{\perp}\vec{H}+12(\vec{H}\cdot\vec{h}^{ij})\vec{h}_{ij} -96|\vec{H}|^2\vec{H}   \nonumber
\\&= 12\nabla_j\left[\nabla^j\vec{H}-2|\vec{H}|^2\nabla^j\vec{\Phi} +2(\vec{H}\cdot\vec{h}^{jk})\nabla_k\vec{\Phi}  \right]   \label{zea}
\end{align}
where we have used Lemma \ref{rivvy} to write \eqref{zea}.
Hence, we have found
\begin{gather}
\epsilon^{mijk}\nabla_i(\vec{R}_0)_{jk}\bullet \nabla_m\vec{\Phi}+ \epsilon^{mijk} \nabla_i (S_0)_{jk} \nabla_m\vec{\Phi}   +6\nabla_j\vec{v}^{ji}\bullet\nabla_i\vec{\Phi}   \nonumber
\\=12\nabla_j\left[\nabla^j\vec{H}-2|\vec{H}|^2\nabla^j\vec{\Phi} +2(\vec{H}\cdot\vec{h}^{jk})\nabla_k\vec{\Phi}  \right] .   \label{recca}
\end{gather}

\noindent
Lastly, applying divergence to the Hodge decomposition
$$\nabla^j\vec{u}+\nabla_i\vec{v}^{ij}=\frac{5}{3}|\vec{H}|^2\nabla_i\vec{\eta}^{ij}$$
yields
\begin{align}\Delta_g\vec{u}=\frac{5}{3}\nabla_j|\vec{H}|^2\nabla_i\vec{\eta}^{ij}. \label{hads8}\end{align}

\noindent
Also, $\vec{v}$ satisfies the equation
$$\Delta\vec{v}=\frac{5}{3} d\left( |\vec{H}|^2d^\star\vec{\eta}  \right).$$
\end{proof}

\section{Regularity Results} \label{chap4}

\noindent
The proof of Theorem \ref{heart} is divided into different subsections of this section. In Subsection \ref{adzsq}, we develop generic regularity results applicable to our problem. 
The goal of Subsection \ref{amena} is to understand the regularity of the quantities $\vec{V}, \vec{L}_0, \vec{L}, S_0, \vec{R}_0$ and $\vec{u}$ obtained in Section \ref{latty1}. A major achievement of this paper is the existence of an invertible operator $\vec{\mathcal{P}}$ used in creating new  2-forms: $\vec{L}_1, S$ and $\vec{R}$ which are estimated in Subsection \ref{toute}. Subsection \ref{linkage} answers the important question of how to link the regularity of the primitives $S$ and $\vec{R}$ back to geometric quantities like $\vec{H}$ or $\vec{\Phi}$. This is done via the return equation obtained in Section \ref{latty1}. By obtaining a Morrey-type estimate, the integrability of $d\vec{H}$ is improved, the bootstrap argument ensues and the proof is finished.

\subsection{Set up and preliminary results} \label{adzsq}
\begin{df}
Let $\Omega\subset \mathbb{R}^4$ be a ball. The Sobolev space $W^{k,p}(\Omega)$  of measurable functions from $\Omega$ into $\mathbb{R}^m$ is defined by
$$W^{k,p}(\Omega):=\left\{ f\in L^p(\Omega) \,:\, D^\alpha f\in L^p(\Omega)\,\, \forall\,\,\alpha \,\,\mbox{with} \,\,|\alpha|\leq k          \right\}   $$
with the norm
$$\norm{f}_{W^{k,p}(\Omega)}:= \sum_{|\alpha|\leq k}\norm{D^\alpha f}_{L^p(\Omega)}.$$
The dual of $W^{k,p}(\Omega)$ is $W^{-k,p}(\Omega)$. 

\noindent
The homogeneous Sobolev space $\dot W^{k,p}(\Omega)$ is the space of all $k$-weakly differentiable functions $f\in L^p(\Omega)$ such that $D^\alpha f\in L^p(\Omega)$ for all $|\alpha|=k$.

\noindent
The closure of $C_c^\infty(\Omega)$ in $W^{k,p}(\Omega)$ is denoted by $W_0^{k,p}(\Omega)$. 
\end{df}
\noindent
Suppose that $\vec{\Phi}\in W^{3,2}(\Omega)\cap W^{1,\infty}(\Omega)$ is a critical point of the energy $\mathcal{E}(\Sigma).$

\noindent
By hypothesis, the coefficients of the metric tensor given by $g_{ij}:=\nabla_i\vec{\Phi}\cdot\nabla_j\vec{\Phi}$ lie in the space $W^{2,2}\cap L^{\infty}$. We are interested in the regularity of Willmore-type manifolds with no branch point, that is the metric does not degenerate. The metric coefficients are uniformly bounded from above and below and satisfy, in a local coordinate chart $\Omega$ of $\Sigma$:
\begin{gather}
g_{ij} \simeq \delta_{ij} \quad\quad \textnormal{as (4 $\times$ 4)-matrices.}  \nonumber
\end{gather}
In other words, $g_{ij}$ satisfies on $\Omega$
\begin{gather}
c^{-1} \delta_{ij} u^i v^j\leq g_{ij} u^i v^j \leq c\delta_{ij} u^i v^j     \nonumber
\end{gather}
for some $c>0$ and for all $u,v \in\mathbb{R}^4$. We have used $\delta_{ij}$ to denote the $(i,j)-$component of the flat metric.

\noindent
For $A\in \Lambda^k(\Omega)$ and $B\in \Lambda^k(\Omega)$ with local representations
$$A=\frac{1}{k!}A_{[i_1....i_k]} dx^{i_1}\wedge_4\cdots\wedge_4 dx^{i_k}\quad\mbox{and}\quad B=\frac{1}{k!}B_{[j_1....j_k]} dx^{j_1}\wedge_4\cdots\wedge_4 dx^{j_k} $$
we define the usual inner product on $\Omega\in\mathbb{R}^4 $
\begin{gather}
\langle A, B\rangle :=\frac{1}{(2k)!}\int_\Omega g^{i_1 j_1} \cdots g^{i_kj_k} A_{[i_1...i_k} B_{j_1...j_k]} dx^{1}\wedge_4 \cdots \wedge_4 dx^4.   \nonumber
\end{gather}
Since the metric is controlled by the Euclidean metric on $\Omega$, we have
\begin{gather}
\langle A, B\rangle \simeq  \langle A, B \rangle_0:=  \frac{1}{(2k)!}\int_\Omega \delta^{i_1 j_1} \cdots \delta^{i_kj_k} A_{[i_1...i_k} B_{j_1...j_k]} dx^{1}\wedge_4 \cdots \wedge_4 dx^4  \nonumber
\end{gather}
which is the usual inner-product on forms for the flat metric $\delta$. Accordingly, we have
\begin{align}
\norm{A}_{L^p}\equiv \sup_{\norm{B}_{L^{p'}}=1}\langle A, B\rangle _0 = \sup_{\norm{B}_{L^{p'}}=1}\langle A, B\rangle  \quad\quad\forall \,\,p\in[1,\infty)\,,\,\, \frac{1}{p} +\frac{1}{p'}=1.   \nonumber
\end{align}
\noindent
This fact will be used recurrently and tacitly in subsequent subsections, in  particular, when calling upon duality arguments.  
\\
First, we discuss the regularising properties of an equation of the type

\begin{align} \begin{cases} 
      \Delta u=f & \mbox{in}\,\, \Omega \\
      \,\,\,\,\,u=0 & \mbox{on} \,\, \partial \Omega.
   \end{cases} \label{take}
\end{align}

\noindent
Equivalently, we can write 
\begin{align}
\mathcal{L}[u]:= \partial_i(|g|^{1/2} g^{ij} \partial_j u)= |g|^{1/2} f  \nonumber
\end{align}
where $\mathcal{L}$ is a second-order partial differential operator. The coefficient $a^{ij}:\Omega\rightarrow \mathbb{R}$ defined by $a^{ij}:= |g|^{1/2} g^{ij}$ is clearly symmetric and 
satisfies the uniform ellipticity condition: there exists some $c > 0$ such that
\begin{align}
c^{-1}\delta^{ij} u_i v_j\leq a^{ij}u_iv_j \leq c \delta^{ij}u_iv_j\,, \,\,\forall\,\, u,v\in\mathbb{R}^4.\nonumber
\end{align}
\\
\noindent
Unfortunately, the coefficients $a^{ij}$ are not H\"older continuous and standard regularity theory tools for elliptic equations cannot be applied but thanks to  Claim \ref{cll}  and Proposition \ref{callit}, we are still able to obtain the desired results.

\begin{cl}\label{cll}
The coefficients $a^{ij}$ belong to the space $W^{2,2}\cap L^\infty$ under pointwise multiplication. 
\end{cl}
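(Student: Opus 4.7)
The claim asserts that the pointwise product endows $W^{2,2}\cap L^\infty$ with an algebra structure in which the coefficients $a^{ij}=|g|^{1/2}g^{ij}$ sit naturally, so my plan is to prove the algebra property first and then feed the geometric data into it.

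The starting point is the hypothesis $\vec{\Phi}\in W^{3,2}\cap W^{1,\infty}$, from which $\nabla\vec{\Phi}\in W^{2,2}\cap L^\infty$ componentwise. Since $g_{ij}=\nabla_i\vec{\Phi}\cdot\nabla_j\vec{\Phi}$, the metric coefficients will land in $W^{2,2}\cap L^\infty$ as soon as the space is proven stable under products. The key analytic input is the borderline Sobolev embedding in dimension four: $W^{2,2}(\Omega)\hookrightarrow W^{1,4}(\Omega)$, which gives $Df\in L^4$ whenever $f\in W^{2,2}$. Combined with the $L^\infty$ bound this exactly saturates the Leibniz rule at the level of two derivatives.

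Concretely, for $f,g\in W^{2,2}\cap L^\infty$ I would differentiate twice and control each term:
\begin{align*}
\partial_k(fg) &= (\partial_k f)g + f(\partial_k g) \in L^2\cdot L^\infty+L^\infty\cdot L^2 \subset L^2,\\
\partial_k\partial_\ell(fg) &= (\partial_k\partial_\ell f)g + (\partial_k f)(\partial_\ell g) + (\partial_\ell f)(\partial_k g) + f(\partial_k\partial_\ell g),
\end{align*}
so that the pure second-derivative terms lie in $L^2\cdot L^\infty$ while the mixed terms lie in $L^4\cdot L^4\subset L^2$ thanks to the embedding $W^{2,2}\hookrightarrow W^{1,4}$. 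Together with the trivial $L^\infty$ bound on $fg$, this shows $fg\in W^{2,2}\cap L^\infty$ with a controlled norm.

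Once the algebra property is in hand, the coefficients $a^{ij}$ follow quickly. Iterated products give $g_{ij}\in W^{2,2}\cap L^\infty$ and hence $|g|=\det(g_{ij})$, a polynomial in the $g_{ij}$, lies in the same space. The non-degeneracy assumption $g_{ij}\simeq\delta_{ij}$ ensures $|g|$ is bounded above and below by positive constants, so the functions $t\mapsto t^{-1}$ and $t\mapsto t^{1/2}$ are smooth on the range of $|g|$; applying chain rule to $F(|g|)$ for such $F$, one sees $F(|g|)\in W^{2,2}\cap L^\infty$ because $\partial^2 F(|g|)=F''(|g|)(\partial|g|)^2+F'(|g|)\partial^2|g|$ puts the dangerous term $(\partial|g|)^2$ back into $L^2$ via $\partial|g|\in L^4$. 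Since $g^{ij}$ is a polynomial in the $g_{k\ell}$ divided by $|g|$, both factors of $a^{ij}=|g|^{1/2}g^{ij}$ belong to $W^{2,2}\cap L^\infty$, and one more application of the algebra property finishes the proof.

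The only delicate point is the critical-exponent nature of $W^{2,2}$ in dimension four — by itself it is not an algebra — so the intersection with $L^\infty$ is genuinely needed and the embedding $W^{2,2}\hookrightarrow W^{1,4}$ must be invoked explicitly to handle the $(\partial f)(\partial g)$ term. Everything else is a routine chain and Leibniz rule exercise.
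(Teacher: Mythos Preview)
Your proof is correct and follows the same approach as the paper: establish that $W^{2,2}\cap L^\infty$ is an algebra via the Leibniz rule and the critical embedding $W^{2,2}\hookrightarrow W^{1,4}$, then feed in $g_{ij}=\nabla_i\vec\Phi\cdot\nabla_j\vec\Phi$. You actually supply more detail than the paper does, carefully treating $|g|^{1/2}$ and $g^{ij}$ via the chain rule and Cramer's rule together with the non-degeneracy hypothesis, whereas the paper's own argument simply asserts the conclusion for $a^{ij}$ once the algebra property is in hand.
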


\noindent
By the Sobolev embedding theorem, $W^{2,2}\subset W^{1,4}$. Thus $a^{ij}$ belong to $L^\infty\cap W^{1,4}$.

\begin{rmk}
Although it is true that the product of a  function in $L^p$ and any metric component or its determinant remains in $L^p$, the same cannot be said of Sobolev spaces (or of their duals). This is because the metric coefficients are not sufficiently differentiable (not Lipschitz). But the coefficients belong to $L^\infty\cap W^{2,2}$ by Claim \ref{cll} and the following holds.
\end{rmk}

\begin{cl} \label{claimclaim}
Let $\alpha$ be a function in $(L^\infty \cap W^{2,2})$. Let $\beta$ be a function in  $\dot{W}^{-1,2}$. Then the product $\alpha\beta$ belongs to $  \dot{W}^{-1,2} \oplus L^{4/3} \subset \dot{W}^{-1,2}$.
\end{cl}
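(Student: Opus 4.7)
The plan is to realise $\beta$ as a divergence, distribute the multiplication by $\alpha$ through the distributional Leibniz rule, and then control each resulting piece by a straightforward Sobolev/H\"older argument tailored to dimension four.

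First, by the standard characterisation of $\dot{W}^{-1,2}(\Omega)$ (via the Riesz isomorphism with $\dot{W}^{1,2}_0$), there exists a vector field $F\in L^{2}(\Omega;\mathbb{R}^{4})$ such that
\begin{equation*}
\beta \;=\; \partial_i F^i \quad \text{in }\mathcal{D}'(\Omega), \qquad \|F\|_{L^2}\;\lesssim\;\|\beta\|_{\dot{W}^{-1,2}}.
\end{equation*}
Since $\alpha\in L^{\infty}\cap W^{2,2}$ is in particular Lipschitz-free but still admits a weak gradient in $L^4$ (see below), the distributional product rule gives
\begin{equation*}
\alpha\beta \;=\; \partial_i\bigl(\alpha F^i\bigr) \;-\; (\partial_i\alpha)\,F^i.
\end{equation*}

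Next I would handle the two summands separately. The first piece $\partial_i(\alpha F^i)$ lies in $\dot{W}^{-1,2}$: since $\alpha\in L^\infty$ and $F\in L^2$, the field $\alpha F$ belongs to $L^2$, and any $L^2$ vector field has a divergence in $\dot{W}^{-1,2}$ with norm controlled by $\|\alpha\|_{L^\infty}\|F\|_{L^2}$. For the second piece, I invoke the Sobolev embedding in dimension four, $W^{1,2}(\Omega)\hookrightarrow L^4(\Omega)$, together with $\alpha\in W^{2,2}$, to conclude that $\nabla\alpha\in L^4(\Omega;\mathbb{R}^4)$. Then H\"older's inequality with exponents $\frac{1}{4}+\frac{1}{2}=\frac{3}{4}$ yields $(\partial_i\alpha)F^i\in L^{4/3}(\Omega)$, with the quantitative bound
\begin{equation*}
\|(\partial_i\alpha)F^i\|_{L^{4/3}} \;\lesssim\; \|\nabla\alpha\|_{L^4}\,\|F\|_{L^2} \;\lesssim\; \|\alpha\|_{W^{2,2}}\,\|\beta\|_{\dot{W}^{-1,2}}.
\end{equation*}
This gives the decomposition $\alpha\beta\in\dot{W}^{-1,2}\oplus L^{4/3}$.

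Finally, to obtain the inclusion $\dot{W}^{-1,2}\oplus L^{4/3}\subset \dot{W}^{-1,2}$ announced in the statement, I dualise the Sobolev embedding $\dot{W}^{1,2}_0(\Omega)\hookrightarrow L^4(\Omega)$ (valid in $\mathbb{R}^4$ since $2^{*}=4$): this gives $L^{4/3}(\Omega)\hookrightarrow \dot{W}^{-1,2}(\Omega)$ continuously, so both summands land in $\dot{W}^{-1,2}$ and their sum does as well. No step here is genuinely obstructive; the only mild subtlety is that $\alpha$ is not Lipschitz, so the Leibniz rule must be read distributionally, but the $L^\infty$ bound on $\alpha$ is all that is needed to make the divergence-form piece meaningful.
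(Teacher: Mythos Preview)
Your proof is correct and follows essentially the same route as the paper: write $\beta$ as the divergence of an $L^2$ field, apply the Leibniz rule, use $\alpha\in L^\infty$ for the divergence-form piece and $\nabla\alpha\in L^4$ (via $W^{2,2}\hookrightarrow W^{1,4}$ in dimension four) together with H\"older for the $L^{4/3}$ remainder, then dualise the Sobolev embedding. The paper's argument is identical up to notation.
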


\noindent
As a consequence, we have  the norm
$$\norm{A}_{W_0^{1,2}}\equiv \sup_{\norm{B}_{\dot W^{-1,2}}=1} \langle A, B\rangle _0 \simeq \sup_{\norm{B}_{\dot W^{-1,2}}=1} \langle A, B\rangle .$$

\begin{prop}\label{callit}
Let $\Omega\subset\mathbb{R}^4$ be a ball. Suppose that  $f:\Omega\rightarrow \mathbb{R}$  with $f\in L^2(\Omega)$.  Let $u$ be a weak solution of the problem 
\[ \begin{cases} 
      \Delta u=f & \mbox{in}\,\, \Omega \\
      \,\,\,\,\,u=0 & \mbox{on} \,\, \partial \Omega.
   \end{cases}
\]
Then $D^2 u\in L^2$ and 
$$||D^2 u||_{L^2(\Omega)}\lesssim ||f||_{L^2(\Omega)}.$$
\end{prop}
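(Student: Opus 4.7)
The plan is to rewrite $\Delta u = f$ in divergence form as $\partial_i(a^{ij}\partial_j u) = |g|^{1/2} f$, where $a^{ij} := |g|^{1/2} g^{ij}$. By Claim \ref{cll} and Sobolev embedding in dimension four, the coefficients satisfy $a^{ij} \in L^\infty(\Omega) \cap W^{1,4}(\Omega)$ and are uniformly elliptic, while $|g|^{1/2} f \in L^2(\Omega)$. Testing the equation against $u$ itself and invoking Poincar\'e on the ball $\Omega$ first yields the standard energy bound $\|u\|_{W^{1,2}(\Omega)} \lesssim \|f\|_{L^2(\Omega)}$, which will be used later to absorb a lower-order term.

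For the second-order estimate, I would observe that $a^{ij} \in W^{1,4}$ in dimension $n = 4$ forces $a^{ij} \in \mathrm{VMO}(\Omega)$: by Poincar\'e and absolute continuity of the integral, the mean oscillation of $a^{ij}$ on $B_r$ is bounded by $\bigl(\int_{B_r}|\nabla a^{ij}|^4\,dx\bigr)^{1/4}$, which vanishes as $r \to 0$. This places the problem squarely in the $W^{2,p}$ regularity theory for elliptic equations with VMO (equivalently, $W^{1,n}$) coefficients developed by Miranda \cite{mir} and Di Fazio \cite{faz}. Invoking that theory with $p=2$ produces the a priori bound $\|D^2 u\|_{L^2(\Omega)} \lesssim \|f\|_{L^2(\Omega)} + \|u\|_{L^2(\Omega)}$, and the energy estimate above absorbs the $\|u\|_{L^2}$ term, giving the stated inequality.

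The principal obstacle is that $a^{ij}$ fails to be Lipschitz, or even continuous, so the classical Schauder or naive Calder\'on-Zygmund machinery is unavailable. The $W^{1,4}$ regularity in dimension four is precisely the borderline case that still permits $W^{2,p}$ theory, handled by a freezing-and-commutator argument exploiting the smallness of the BMO seminorm of $a^{ij}$ on small balls. Once this technical input is invoked, the remainder of the argument is routine bookkeeping.
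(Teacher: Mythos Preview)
Your proposal is correct and follows essentially the same route as the paper: rewrite $\Delta u=f$ as $\mathcal{L}[u]=|g|^{1/2}f$ with $a^{ij}=|g|^{1/2}g^{ij}\in L^\infty\cap W^{1,4}$, then invoke Miranda's $W^{2,p}$ theory for coefficients in $W^{1,n}$ (equivalently VMO). The paper simply cites Miranda \cite{mir} (and Corollary~1.4 of \cite{cruz}) for the estimate in one line; your version adds the justification that $W^{1,4}\hookrightarrow\mathrm{VMO}$ in dimension four and an energy estimate to absorb the lower-order $\|u\|_{L^2}$ term, which is a welcome clarification but not a different argument.
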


\begin{proof}
Clearly $\mathcal{L}[u]=|g|^{1/2}f$ belongs to $L^2$ since $f\in L^2(\Omega)$. A classical result of Miranda \cite{mir} (see also Corollary 1.4 in \cite{cruz}) then yields the desired estimate
$$\norm{D^2 u}_{L^2(\Omega)}\lesssim \norm{|g|^{1/2} f}_{L^2(\Omega)}\lesssim \norm{f}_{L^2(\Omega)}.$$
\end{proof}

\noindent
Suppose that  $u\in \Lambda^k(\mathbb{R}^4)$  and $f=d^\star B$, where $B\in \Lambda^{k+1}(\mathbb{R}^4)$, then    \eqref{take}  has the special structure   
$$\Delta u=d^\star B$$
which is equivalent to
\begin{align}
\mathcal{L}[u_{i_1\cdots i_k}] \,\,dx^{i_1}\wedge_4\cdots\wedge_4 dx^{i_k}&=|g|^{1/2} d^\star B  \nonumber
\\&= \partial_a(|g|^{1/2} g^{ai_{k+1}}B_{i_1\cdots i_{k+1}})\,\,dx^{i_1}\wedge_4\cdots\wedge_4 dx^{i_k}.  \nonumber
\end{align}
\noindent
This is a decoupled system of $k$-uniformly elliptic equations in flat divergence form with coefficients in $W^{2,2}\cap L^\infty$ which embeds in $VMO\cap L^\infty$. Such equations are studied by Di Fazio in  \cite{faz} (see also \cite{auscher}).

\begin{prop}  \label{adz}
Let $\Omega\subset\mathbb{R}^4$ be a ball. Suppose that $B\in \Lambda^{k+1}\otimes L^p(\Omega)$ for $p\in(1,\infty)$. 
Let $u\in \Lambda^k(\Omega)$ be the $k$-form satisfying
\[ \begin{cases} 
      \Delta u=d^\star B& \mbox{in}\,\, \Omega \\
      \,\,\,\,\,u=0 & \mbox{on} \,\, \partial \Omega.
   \end{cases}
\]
Then 
$$\norm{Du}_{L^p(\Omega)}\lesssim \norm{B}_{L^p(\Omega)}.$$
\end{prop}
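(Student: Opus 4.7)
The plan is to treat the equation componentwise as a decoupled system of scalar uniformly elliptic equations in divergence form with $VMO\cap L^\infty$ coefficients, and then invoke the Di Fazio Calder\'on--Zygmund theorem \cite{faz} (see also \cite{auscher}) to obtain the announced $L^p$ gradient estimate.

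First I would rewrite the equation in local coordinates on each component $u_J$ of the $k$-form $u$, with multi-index $J=(i_1,\ldots,i_k)$. Using the identities
\[
|g|^{1/2}\Delta u_J = \partial_a\!\left(|g|^{1/2}g^{ab}\partial_b u_J\right),\qquad |g|^{1/2}(d^\star B)_J = \partial_a\!\left(|g|^{1/2}g^{ab}B_{bJ}\right),
\]
the equation $\Delta u = d^\star B$ reduces to the decoupled scalar system
\[
\partial_a\!\left(a^{ab}\,\partial_b u_J\right) \,=\, \partial_a F_J^a,\qquad F_J^a := a^{ab}B_{bJ},
\]
where, as in the discussion preceding the proposition, $a^{ab}:=|g|^{1/2}g^{ab}$. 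The matrix $(a^{ab})$ is symmetric and uniformly elliptic since $g$ is. By Claim \ref{cll}, $a^{ab}\in L^\infty\cap W^{2,2}$, and since we are in dimension $4$, the Sobolev embeddings $W^{2,2}\hookrightarrow W^{1,4}\hookrightarrow VMO$ yield $a^{ab}\in L^\infty\cap VMO$. Moreover, because $a^{ab}\in L^\infty$, the source term satisfies $\norm{F_J^a}_{L^p(\Omega)}\lesssim\norm{B}_{L^p(\Omega)}$.

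Next, I would apply Di Fazio's $L^p$-theory for linear divergence-form elliptic equations with $VMO\cap L^\infty$ coefficients and $L^p$ data in divergence form \cite{faz} to each scalar Dirichlet problem for $u_J$, obtaining
\[
\norm{Du_J}_{L^p(\Omega)}\,\lesssim\,\norm{F_J}_{L^p(\Omega)}\,\lesssim\,\norm{B}_{L^p(\Omega)}.
\]
Summing over the finitely many multi-indices $J$ yields $\norm{Du}_{L^p(\Omega)}\lesssim\norm{B}_{L^p(\Omega)}$, as desired.

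I do not anticipate any substantive obstacle: the entire argument is an application of the Di Fazio theorem once one recognises that the system decouples and that the coefficients lie in $L^\infty\cap VMO$ with uniform ellipticity. The only point requiring care is the $VMO$ membership of $a^{ab}$, which follows from the critical Sobolev embedding $W^{1,4}(\mathbb{R}^4)\hookrightarrow VMO$ combined with Claim \ref{cll}; all the rest is structural bookkeeping.
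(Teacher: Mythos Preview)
Your proposal is correct and follows essentially the same approach as the paper: decouple the form equation componentwise, rewrite each scalar equation in flat divergence form with coefficients $a^{ab}=|g|^{1/2}g^{ab}\in W^{2,2}\cap L^\infty\hookrightarrow VMO\cap L^\infty$, and apply Di Fazio's Theorem~2.1 \cite{faz}. Your write-up is in fact slightly more detailed than the paper's (you spell out the Sobolev chain $W^{2,2}\hookrightarrow W^{1,4}\hookrightarrow VMO$ and the summation over multi-indices), but there is no substantive difference in strategy.
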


\begin{proof}
Using Theorem 2.1 in \cite{faz}, each component of $u$ satisfies
$$\norm{Du_{i_1\cdots i_k}}_{L^p(\Omega)}\lesssim \norm{|g|^{1/2} g^{ai_{k+1}}B_{i_1\cdots i_{k+1}}}_{L^p(\Omega)}  \lesssim \norm{B}_{L^p(\Omega)}.$$
\end{proof}

\noindent
We will require the following corollaries of Theorem 2.1 in \cite{faz}.
\begin{prop} \label{111}
Let $\Omega\in\mathbb{R}^4$ be a ball. Suppose that $u\in\Lambda^2(\Omega)$ satisfies the conditions
$$d^\star u=0\quad\quad\mbox{and}\quad\quad du\in \dot{W}^{-1,2}(\Omega).$$
Then
$$\norm{u}_{L^2(\Omega)}\lesssim \norm{du}_{\dot{W}^{-1,2}(\Omega)}.$$
\end{prop}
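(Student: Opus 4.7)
The hypothesis $d^\star u=0$ reduces the Hodge Laplacian to $\Delta u=(dd^\star+d^\star d)u=d^\star du$. Since $du\in\dot W^{-1,2}$, the right-hand side lies in $\dot W^{-2,2}$, so $u$ should be recovered by inverting $\Delta$ and gaining two derivatives. The plan is therefore to dualize the $W^{2,2}$-regularity of Proposition \ref{callit} on the ball.

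Concretely, I would write
\[
\|u\|_{L^2(\Omega)}\simeq\sup\bigl\{\langle u,\phi\rangle\;:\;\phi\in C_c^\infty(\Omega)\otimes\Lambda^2,\;\|\phi\|_{L^2(\Omega)}\leq 1\bigr\}.
\]
For a test form $\phi$, let $\Psi\in W_0^{2,2}(\Omega)\otimes\Lambda^2$ solve the Dirichlet problem $\Delta\Psi=\phi$ in $\Omega$ with $\Psi|_{\partial\Omega}=0$. In local coordinates this is a decoupled system of uniformly elliptic scalar equations of the type $\mathcal{L}[\Psi_{ij}]=|g|^{1/2}\phi_{ij}$, to which Proposition \ref{callit} applies componentwise and yields $\|D^2\Psi\|_{L^2(\Omega)}\lesssim\|\phi\|_{L^2(\Omega)}$. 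Then, using $\Delta=dd^\star+d^\star d$ and integrating by parts,
\[
\langle u,\phi\rangle=\langle u,dd^\star\Psi\rangle+\langle u,d^\star d\Psi\rangle=\langle d^\star u,d^\star\Psi\rangle+\langle du,d\Psi\rangle=\langle du,d\Psi\rangle,
\]
where the boundary contributions vanish thanks to the Dirichlet condition on $\Psi$ and the first bracket collapses by the hypothesis $d^\star u=0$. The remaining pairing is estimated by the duality between $\dot W^{-1,2}$ and $\dot W^{1,2}_0$:
\[
|\langle du,d\Psi\rangle|\leq\|du\|_{\dot W^{-1,2}(\Omega)}\|d\Psi\|_{\dot W^{1,2}_0(\Omega)}\lesssim\|du\|_{\dot W^{-1,2}(\Omega)}\|\phi\|_{L^2(\Omega)},
\]
and taking the supremum over $\phi$ closes the estimate.

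\textbf{Main obstacle.} The delicate point is making the integration by parts rigorous in the correct function spaces. One must choose the Dirichlet form of the Hodge boundary value problem so that (i) $d\Psi$ genuinely lies in $\dot W^{1,2}_0(\Omega)$, enabling its pairing with the distribution $du\in\dot W^{-1,2}(\Omega)$, and (ii) the two integrations by parts above produce no boundary traces. The remaining ingredient---extending Proposition \ref{callit} from scalars to 2-forms---is routine because the Hodge Laplacian in local coordinates decouples, up to lower-order terms with coefficients in $L^\infty\cap W^{2,2}\subset \mathrm{VMO}\cap L^\infty$, into scalar operators of the form $\mathcal{L}[\,\cdot\,]=\partial_i(|g|^{1/2}g^{ij}\partial_j\cdot\,)$ covered by the Miranda-type estimate already invoked.
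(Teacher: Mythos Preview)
Your approach is essentially the same duality argument as the paper's: pair $u$ against an arbitrary $L^2$ test form, integrate by parts so that $d^\star u=0$ kills one term, and estimate the surviving term via the $\dot W^{-1,2}$--$W^{1,2}_0$ duality. The one meaningful difference is how the test form is prepared. You solve $\Delta\Psi=\phi$ componentwise with $\Psi|_{\partial\Omega}=0$ and then split $\Delta=dd^\star+d^\star d$; this forces you to confront exactly the obstacle you name, since neither $d\Psi$ nor $d^\star\Psi$ need vanish on $\partial\Omega$ even though $\Psi$ does, so the boundary terms in both integrations by parts and the membership $d\Psi\in W^{1,2}_0$ are genuinely in doubt. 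The paper sidesteps this by Hodge-decomposing the test form first, $\Psi=d\theta+d^\star\psi$ with $d\psi=0$, $d^\star\theta=0$, and imposing $\theta|_{\partial\Omega}=\psi|_{\partial\Omega}=0$ directly on the potentials. Then only a single integration by parts is needed in each term, the boundary contributions vanish because $\theta$ and $\psi$ (not their derivatives) sit on $\partial\Omega$, and the final pairing is $\langle du,\psi\rangle$ with $\psi\in W^{1,2}_0$ outright. The $W^{1,2}$ bound on $\psi$ comes from Di~Fazio applied to $\mathcal L[\psi]=|g|^{1/2}d\Psi$, which is the same elliptic input you invoke. So your plan is correct in outline, but the Hodge decomposition of the test form is precisely the device that dissolves your ``main obstacle'' without further work.
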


\begin{proof}
Let $\Psi \in \Lambda^2\otimes L^2(\Omega)$ be arbitrary. Standard Hodge theory guarantees  there exist $\psi\in \Lambda^3\otimes W^{1,2}(\Omega)$ and $\theta\in\Lambda^1\otimes W^{1,2}(\Omega)$ such that
$$\Psi =d\theta +d^\star \psi  \quad\quad\mbox{with}\quad\quad d \psi=0\quad\mbox{and}\quad d^\star\theta=0.$$ 
We have, per Claim \ref{claimclaim},
$$\mathcal{L}[\psi]=|g|^{1/2}\Delta\psi=|g|^{1/2} d\Psi \,\,\in \,\dot{W}^{-1,2}(\Omega).       $$
Reasoning componentwise (set $\psi|_{\partial \Omega}=0$), we use \cite{faz} to arrive at
\begin{align}
\norm{D\psi}_{L^2(\Omega)}\lesssim \norm{|g|^{1/2} d\Psi}_{\dot{W}^{-1,2}(\Omega)}\lesssim \norm{\Psi}_{L^2(\Omega)}.  \label{cklh}
\end{align}
Set $\theta|_{\partial \Omega}=0$. Observe that
\begin{align}
\langle u, \Psi\rangle &=\langle u, d\theta +d^\star \psi\rangle     \nonumber
\\&= \langle d^\star u, \theta \rangle + \langle u, \theta \rangle _{\partial \Omega} +\langle d u, \psi \rangle + \langle u, \psi \rangle _{\partial \Omega}  \nonumber
\\&= \langle du, \psi\rangle     \nonumber
\\&\lesssim \norm{du}_{\dot{W}^{-1,2}(\Omega)} \norm {\psi}_{W_0^{1,2}(\Omega)}   \nonumber
\\ &\overset{\eqref{cklh}}\lesssim \norm{du}_{\dot{W}^{-1,2}(\Omega)} \norm {\Psi}_{L^2(\Omega)}   .  \nonumber
\end{align}
Hence the result
$$\norm{u}_{L^2(\Omega)}\lesssim \norm{du}_{\dot{W}^{-1,2}(\Omega)}.$$

\end{proof}

\begin{prop} \label{nadaa}
Let $\Omega \subset \mathbb{R}^4$ be a ball. Any $Q\in\Lambda^3(\Omega, \Lambda^2(\mathbb{R}^5))\otimes \dot{W}^{-1,2}(\Omega)$ can be decomposed in the form
$$Q=dA+d^\star B \quad\quad\mbox{with}\quad\quad d^\star A=0\,,\,dB=0\,\,,\,\, A\in\Lambda^2\,\,,\,\, B\in\Lambda^4\,$$
and
$$\norm{A}_{L^2(\Omega)}+\norm{B}_{L^2(\Omega)}\lesssim \norm{Q}_{\dot{W}^{-1,2}(\Omega)}.$$
\end{prop}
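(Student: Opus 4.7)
The strategy is to recover $A$ and $B$ from a single three-form potential $C$ obtained by solving a Poisson-type problem. Precisely, I would look for $C\in \Lambda^3(\Omega,\Lambda^2(\mathbb{R}^5))$ satisfying
\[
\Delta C = Q\quad\mbox{in }\Omega,\qquad C\big|_{\partial\Omega}=0,
\]
where $\Delta=dd^\star+d^\star d$ is the Hodge Laplacian for the metric $g$, and then set $A:=d^\star C\in\Lambda^2$ and $B:=dC\in\Lambda^4$. The algebraic constraints $d^\star A=(d^\star)^2C=0$ and $dB=d^2C=0$ then hold automatically, while the Hodge identity
\[
dA+d^\star B = dd^\star C+d^\star dC = \Delta C = Q
\]
provides the desired decomposition. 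The norm bound is immediate once $C$ is controlled, since pointwise $|A|+|B|\lesssim|DC|$, so $\norm{A}_{L^2}+\norm{B}_{L^2}\lesssim \norm{DC}_{L^2}$.

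For existence and the sharp estimate $\norm{DC}_{L^2(\Omega)}\lesssim\norm{Q}_{\dot W^{-1,2}(\Omega)}$, I would apply Lax--Milgram to the bilinear form
\[
a(C,\Psi):=\langle dC,d\Psi\rangle+\langle d^\star C,d^\star \Psi\rangle
\]
on the Hilbert space $H:=\Lambda^3(\Omega,\Lambda^2(\mathbb{R}^5))\otimes W^{1,2}_0(\Omega)$. Coercivity is delivered by Gaffney's inequality combined with Poincar\'e's inequality, giving $\norm{DC}_{L^2}^2\lesssim a(C,C)$ for $C\in H$. Since $Q\in\dot W^{-1,2}$, the functional $\Psi\mapsto\langle Q,\Psi\rangle$ is continuous on $H$ with norm controlled by $\norm{Q}_{\dot W^{-1,2}}$, so Lax--Milgram produces a unique $C\in H$ satisfying $a(C,\Psi)=\langle Q,\Psi\rangle$ for every test $\Psi\in H$, together with $\norm{DC}_{L^2}\lesssim\norm{Q}_{\dot W^{-1,2}}$. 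Integration by parts in the variational identity, with boundary contributions vanishing because $\Psi|_{\partial\Omega}=0$, then gives $\Delta C=Q$ in the distributional sense, closing the argument.

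The main obstacle is justifying Gaffney's inequality in this low-regularity setting: the codifferential $d^\star$ and the Hodge star are taken with respect to a metric $g$ whose coefficients only lie in $W^{2,2}\cap L^\infty$ (Claim \ref{cll}), so the classical form of Gaffney's identity cannot be applied verbatim. Since $g\simeq\delta$ uniformly on $\Omega$, however, the $L^2$ norms induced by $g$ and by the flat metric are equivalent, and $d^\star$ differs from the flat codifferential $d_0^\star$ by multiplication by bounded metric factors plus lower-order terms involving $Dg$, which by Sobolev embedding $W^{2,2}\hookrightarrow W^{1,4}$ belongs to $L^4$ and generates a compact perturbation in $W^{1,2}$. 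A componentwise argument then reduces coercivity of $a$ to the standard Gaffney identity for the flat Euclidean Laplacian on a ball with zero Dirichlet data; the lower-order perturbations are absorbed using Poincar\'e's inequality. This low-regularity perturbation step is the most delicate part of the proof; the remainder is a straightforward application of Lax--Milgram and the Hodge identity.
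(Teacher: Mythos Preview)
Your approach is sound but genuinely different from the paper's. The paper does not construct $A$ and $B$ from a potential $C$; instead it takes the Hodge decomposition $Q=dA+d^\star B$ as given and proves the $L^2$ bounds by duality: for an arbitrary $\Psi\in L^2(\Omega)$ one writes $\Psi=d\theta+d^\star\psi$ with $\theta,\psi\in W_0^{1,2}$, obtains $\norm{D\psi}_{L^2}\lesssim\norm{\Psi}_{L^2}$ from Di~Fazio's $L^p$ theory for divergence-form operators with $VMO$ coefficients (already packaged as Proposition~\ref{adz}), and then pairs $\langle A,\Psi\rangle$ and $\langle \star B,\Psi\rangle$ against this decomposition, reducing both to $\langle Q,\psi\rangle\lesssim\norm{Q}_{\dot W^{-1,2}}\norm{\psi}_{W_0^{1,2}}$. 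Your route is more self-contained and constructive, and it makes the origin of $A$ and $B$ transparent; the paper's route is shorter within its own framework because it recycles the Di~Fazio estimate and entirely sidesteps the Gaffney inequality for rough metrics. Your identification of that Gaffney step as the delicate point is accurate: with $g\in W^{2,2}\cap L^\infty$ the error $d^\star_g C-d^\star_0 C$ contains a term of the form $(Dg)\cdot C$ which is only in $L^{4/3}$ a~priori, so one really needs the compact embedding $W_0^{1,2}\hookrightarrow L^4$ (so that $(Dg)\cdot C\in L^2$ with small norm after rescaling or via Fredholm theory plus uniqueness) to close the coercivity argument. This can be made rigorous, but the paper's duality argument avoids the issue altogether.
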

\begin{proof}

Let $\Psi\in L^2(\Omega)$ be arbitrary. As in the proof of Proposition \ref{111}, we have
$$\Psi=d\theta+d^\star \psi\quad\mbox{with} \quad d^\star\theta=0\,,\,d\psi=0,\,\, \theta|_{\partial \Omega}=0, \,\, \psi|_{\partial \Omega}=0,$$
and
\begin{align}\norm{D\psi}_{L^2(\Omega)}\lesssim \norm{|g|^{1/2} d\Psi}_{\dot{W}^{-1,2}(\Omega)}\lesssim \norm{\Psi}_{L^2(\Omega)}.  \label{neededd}
\end{align}
Since $\star B$ is a 0-form and $d\psi=0$, we have  
\begin{align}
\langle \star B,\Psi \rangle&=\langle \star B, d\theta+d^\star\psi \rangle  \nonumber
\\&=\langle \star d^\star B, \psi \rangle+\langle \star  B, \theta \rangle_{\partial \Omega}+\langle \star  B, \star\psi \rangle_{\partial \Omega} \nonumber
\\&= \langle \star Q- d^\star(\star A), \psi \rangle  \nonumber
\\&= \langle \star Q, \psi \rangle  -\langle \star A, d\psi \rangle  -\langle \star A, \psi \rangle_{\partial\Omega} \nonumber
\\&\lesssim \norm{\star Q}_{\dot{W}^{-1,2}(\Omega)} \norm{\psi}_{W_0^{1,2}(\Omega)}   \nonumber
\\&\overset{\eqref{neededd}} \lesssim \norm{Q}_{\dot{W}^{-1,2}(\Omega)} \norm{\Psi}_{L^2(\Omega)} \nonumber
\end{align}
where Claim \ref{claimclaim} has been used to obtain the last line. Hence
\begin{align}\norm{B}_{L^2(\Omega)}\simeq \norm{\star B}_{L^2(\Omega)}\lesssim \norm{Q}_{\dot{W}^{-1,2}(\Omega)}.\label{jknba}\end{align}
 Similarly, we have
\begin{align}
\langle A, \Psi \rangle&=   \langle A, d\theta+ d^\star \psi \rangle   \nonumber
\\&= \langle dA, \psi \rangle  \nonumber
\\&= \langle Q-d^\star B, \psi \rangle  \nonumber
\\&= \langle Q, \psi \rangle  \nonumber
\\&\lesssim \norm{Q}_{\dot{W}^{-1,2}(\Omega)} \norm{\psi}_{W_0^{1,2}}  \nonumber
\\& \lesssim \norm{Q}_{\dot{W}^{-1,2}(\Omega)} \norm{\Psi}_{L^2(\Omega)}. \nonumber  
\end{align}
Hence 
\begin{gather}
\norm{A}_{L^2(\Omega)}\lesssim \norm{Q}_{\dot{W}^{-1,2}(\Omega)}.\label{jknb} 
\end{gather}
Combining the estimates \eqref{jknba} and \eqref{jknb} proves the result.
\end{proof}

We now state the following Bourgain-Brezis type result (see Theorem 6.4 in \cite{curca}, \cite{maz} and \cite{van})

\begin{prop} \label{curca}
Let $\Omega \subset \mathbb{R}^4$ be a ball and let $V\in\Lambda^1\otimes (L^1\oplus \dot{W}^{-2,2})$ with $d^\star V=0.$ There exists $w\in\Lambda^2\otimes \dot{W}^{-1,2}(\Omega)$ such that
$$d^\star w=V\quad,\quad dw=0\quad,\quad \norm{w}_{\dot{W}^{-1,2}(\Omega)}\lesssim \norm{V}_{L^1\oplus \dot{W}^{-2,2}(\Omega)}.$$
\end{prop}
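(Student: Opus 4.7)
The plan is to split $V = V_1 + V_2$ according to the direct-sum decomposition, with $V_1\in\Lambda^1\otimes L^1(\Omega)$ and $V_2\in\Lambda^1\otimes \dot W^{-2,2}(\Omega)$, to construct a coclosed primitive $w_j$ of each piece separately in $\dot W^{-1,2}$, and to sum. Using a Hodge-type decomposition inside each ambient space, I would first arrange that $d^\star V_j=0$ individually: the common exact correction produced by the splitting is absorbed into a single gradient term, which we later kill by choosing coclosed gauges for the $w_j$.

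For the regular piece $V_2$, classical elliptic / Hodge theory on $\Omega$ suffices: solving $\Delta w_2 = dV_2$ componentwise with vanishing trace on $\partial\Omega$ produces $w_2\in\dot W^{-1,2}$ with the linear bound $\|w_2\|_{\dot W^{-1,2}}\lesssim \|V_2\|_{\dot W^{-2,2}}$; the coclosedness $d^\star V_2=0$ then forces $d^\star w_2 = V_2$, and the co-exact freedom in $w_2$ lets us impose $dw_2=0$.

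The singular piece $V_1$ is the substantive step, and is precisely where standard Calder\'on--Zygmund / Hodge theory breaks down, as there is no bounded Riesz transform at the $L^1$ endpoint. The target is $w_1\in\dot W^{-1,2}$, which by the dual Sobolev embedding $L^{4/3}\hookrightarrow \dot W^{-1,2}$ (dual to $\dot W^{1,2}\hookrightarrow L^4$ in four dimensions) is implied by $w_1\in L^{4/3}$. That one can indeed select such a coclosed $w_1$ from a divergence-free $V_1\in L^1$, with a linear estimate, is exactly the content of the Bourgain--Brezis improvement; I would invoke Theorem~6.4 of \cite{curca} in the formulation of \cite{maz, van}, preceded by a standard zero-extension of $V_1$ from $\Omega$ to $\mathbb{R}^4$ (which preserves both the $L^1$ norm and the divergence-free condition up to a boundary term that can be reabsorbed or mollified).

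Setting $w:=w_1+w_2$ and projecting onto the coclosed gauge gives the full statement. The main obstacle is clearly the singular piece: without the Bourgain--Brezis improvement, which exploits the algebraic constraint $d^\star V=0$ rather than any linear elliptic regularity, the claimed $\dot W^{-1,2}$ control cannot be reached from $L^1$ data.
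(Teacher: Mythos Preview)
The paper does not prove this proposition at all: it is stated as a Bourgain--Brezis type result and referenced directly to Theorem~6.4 of \cite{curca} (and to \cite{maz}, \cite{van}). So there is nothing to compare against; any self-contained argument goes beyond what the paper offers.

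That said, your sketch has a genuine gap in the splitting step. When you write $V=V_1+V_2$ with $V_1\in L^1$ and $V_2\in\dot W^{-2,2}$, neither summand is individually coclosed; you have only $d^\star V_1=-d^\star V_2$. Your proposed remedy (``absorb the common exact correction into a single gradient term, kill it later by choosing coclosed gauges'') does not work as stated: after Hodge-projecting each $V_j$ onto its coclosed part $\tilde V_j$, the residual exact piece $d\phi$ satisfies $\Delta\phi=0$ (from $d^\star V=0$), so it is a nontrivial harmonic $1$-form that is \emph{not} in the image of $d^\star$ acting on closed $2$-forms. You therefore cannot simply ``gauge it away'' while keeping $dw=0$. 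Equally, in your treatment of $V_2$ you assert that ``$d^\star V_2=0$ forces $d^\star w_2=V_2$'', but you never established $d^\star V_2=0$; solving $\Delta w_2=dV_2$ only gives $d^\star w_2=V_2$ up to an exact error. The point of the Bourgain--Brezis/Van Schaftingen machinery (and of Curca's Theorem~6.4) is precisely that the constraint $d^\star V=0$ is used \emph{globally} on the full $V$, not piecewise, to produce the $\dot W^{-1,2}$ primitive; a naive splitting loses exactly the cancellation that makes the endpoint estimate possible. Since you end up invoking Theorem~6.4 of \cite{curca} for the $L^1$ piece anyway, the cleanest fix is to invoke that theorem once, on $V$ itself in $L^1\oplus\dot W^{-2,2}$, which is how the paper uses it.
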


\noindent
Finally, we will need the following technical lemma.

\begin{lem}  \label{tech}
Let $\Omega$ be a ball in $\mathbb{R}^4$. For $k\in (0,1)$, we denote by $\Omega_k$, the ball with same center as $\Omega$ and with radius rescaled by $k$. Suppose $T\in \Lambda^1\otimes L^2(\Omega)$ satisfies
$$\norm{dT}_{L^{4/3}(\Omega)}+\norm{d^\star T}_{L^{4/3}(\Omega) }\lesssim M,$$
for some constant $M>0$. Then
$$\norm{T}_{L^2(\Omega_k)}\lesssim M+|\Omega|^{1/4}\norm{T}_{L^2(\Omega)}.$$ 
\end{lem}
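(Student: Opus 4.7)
The plan is to localize $T$ by a smooth cutoff and then apply a global Gaffney--Sobolev-type estimate on $\mathbb{R}^4$ to the compactly supported form $\chi T$. The crucial analytic ingredient is that in dimension four, the Sobolev embedding $\dot W^{1,2}\hookrightarrow L^4$ dualizes to $L^{4/3}\hookrightarrow \dot W^{-1,2}$, which matches the exponents in the hypothesis and conclusion of the lemma precisely.

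Concretely, let $r$ denote the radius of $\Omega$ and fix $\chi\in C_c^\infty(\Omega)$ with $\chi\equiv 1$ on $\Omega_k$, $0\leq\chi\leq 1$, and $\|D\chi\|_{L^\infty}\leq C(k)/r$ for a constant $C(k)$ depending only on $k$. Extending $\chi T$ by zero to $\mathbb{R}^4$ produces a compactly supported 1-form. The first step is to establish the global bound
\[
\|\omega\|_{L^2(\mathbb{R}^4)}\ \lesssim\ \|d\omega\|_{L^{4/3}(\mathbb{R}^4)}+\|d^\star\omega\|_{L^{4/3}(\mathbb{R}^4)}
\]
for every compactly supported 1-form $\omega$ on $\mathbb{R}^4$. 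This follows from the Hodge decomposition $\omega=d\phi+d^\star\psi$, solving $\Delta\phi=d^\star\omega$ and $\Delta\psi=d\omega$ via the Newton potential, and combining standard elliptic estimates with the embedding $L^{4/3}\hookrightarrow \dot W^{-1,2}$. Applying this to $\omega=\chi T$ and using the Leibniz-type identities $d(\chi T)=\chi\,dT+d\chi\wedge T$ and $d^\star(\chi T)=\chi\,d^\star T-\iota_{\nabla\chi}T$ together with the hypothesis yields
\[
\|T\|_{L^2(\Omega_k)}\ \leq\ \|\chi T\|_{L^2(\mathbb{R}^4)}\ \lesssim\ M+\|D\chi\|_{L^\infty}\|T\|_{L^{4/3}(\Omega)}.
\]
H\"older's inequality $\|T\|_{L^{4/3}(\Omega)}\leq |\Omega|^{1/4}\|T\|_{L^2(\Omega)}$ combined with $\|D\chi\|_{L^\infty}\lesssim 1/r\sim |\Omega|^{-1/4}$ then gives the stated bound, with the $k$-dependence absorbed into the hidden constant.

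The main obstacle is the global Gaffney--Sobolev inequality in the first step: since $\chi T$ is genuinely vector-valued (not a scalar), one must control $d$ and $d^\star$ simultaneously rather than using a single Calder\'on--Zygmund estimate for $\Delta^{-1}$. Once the Hodge decomposition of compactly supported forms on $\mathbb{R}^4$ is set up, this is standard; everything else reduces to routine cutoff manipulation, H\"older, and the sharp dimension-four Sobolev embedding.
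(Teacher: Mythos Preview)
Your approach is correct and follows the same overall strategy as the paper: localize with a cutoff, control $d(\chi T)$ and $d^\star(\chi T)$ in $L^{4/3}$ via Leibniz and H\"older, then use the dimension-four Sobolev pairing $L^{4/3}$--$L^4$ to pass to $L^2$. The implementation of that last step differs: the paper argues by duality on $\Omega$, solving $\Delta\psi=\Psi$ with Dirichlet data componentwise and invoking Proposition~\ref{callit} (Miranda's $W^{2,2}$ estimate for rough coefficients) to obtain $\|d\psi\|_{L^4}+\|d^\star\psi\|_{L^4}\lesssim\|\Psi\|_{L^2}$, then integrating by parts; you instead extend $\chi T$ to $\mathbb{R}^4$ and use the Euclidean Newton potential/Hodge decomposition. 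The paper's local route is adapted to the fact that $d^\star$ and $\Delta$ are taken with respect to the non-smooth metric $g\in W^{2,2}\cap L^\infty$; your global argument tacitly uses the flat codifferential, so in the paper's setting one would still need to absorb the discrepancy $d^\star-d^\star_0$ (a term of type $Dg\cdot T\in L^4\cdot L^2\subset L^{4/3}$). One small arithmetic point: in your last line the factors $\|D\chi\|_{L^\infty}\sim|\Omega|^{-1/4}$ and $\|T\|_{L^{4/3}}\lesssim|\Omega|^{1/4}\|T\|_{L^2}$ cancel, so what you actually prove is $\lesssim M+\|T\|_{L^2(\Omega)}$ rather than $M+|\Omega|^{1/4}\|T\|_{L^2(\Omega)}$; the paper's proof has the same feature (it silently absorbs $\|d\mu\|_{L^\infty}$ into the implied constant), and for the intended application this is harmless.
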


\begin{proof}
Let $\mu$ be a standard smooth cut-off function with
$$\mu|_{\Omega_k}=1\quad,\quad \mu|_{\mathbb{R}^4\setminus \Omega}=0\quad, \quad 0<\mu(x) <1\,\,\, \forall\,\, x   \in\mathbb{R}^4.         $$
Clearly, 
\begin{align}
\norm{d(\mu T)}_{L^{4/3}(\Omega)} &\lesssim \norm{d\mu}_{L^\infty (\Omega)}  \norm{T}_{L^{4/3}(\Omega)} +\norm{\mu}_{L^\infty (\Omega)} \norm{dT}_{L^{4/3}(\Omega)}  \nonumber
\\&\lesssim |\Omega|^{1/4}\norm{T}_{L^2(\Omega)} +M\,,          \label{1b1}
\end{align}
where we have used Jensen's inequality. By the same token, using
$$d^\star (\mu T)=\star(d\mu\wedge_4 \star T)+\mu d^\star T\,,$$
we find
\begin{align}
\norm{d^\star (\mu T)}_{L^{4/3}(\Omega)}  \lesssim |\Omega|^{1/4} \norm{T}_{L^2(\Omega)} +M\,.         \label{1b2}
\end{align}

\noindent
Next, let $\Psi \in \Lambda^1\otimes L^2(\Omega)$ be arbitrary. Consider the problem (componentwise)

$$\Delta \psi=\Psi\quad, \quad \psi|_{\partial \Omega}=0\quad,\quad \psi\in \Lambda^1\,.$$
Per Proposition \ref{callit} and the Sobolev embedding theorem, we have
\begin{align}
\norm{d\psi}_{L^4(\Omega)} +\norm{d^\star \psi}_{L^4(\Omega)} \lesssim \norm{D^2\psi}_{L^2(\Omega)}\lesssim \norm{\Psi}_{L^2(\Omega)}.   \label{1b3}
\end{align}
Next, we compute
\begin{align}
\langle \mu T, \Psi \rangle &= \langle \mu T, \Delta \psi\rangle \nonumber
\\&= \langle \mu T, dd^\star \psi +d^\star d\psi\rangle  \nonumber
\\& =\langle d^\star (\mu T), d^\star\psi  \rangle +\langle d(\mu T), d\psi\rangle  + \langle d (\mu T), \psi \rangle_{\partial\Omega} +\langle d^\star (\mu T), \psi \rangle_{\partial\Omega} \nonumber
\\&\overset{\eqref{1b1}, \eqref{1b2}, \eqref{1b3}} \lesssim (|\Omega|^{1/4}\norm{T}_{L^2(\Omega)} +M)  \norm{\Psi}_{L^2(\Omega)}.  \nonumber
\end{align}
This shows that
$$\norm{\mu T}_{L^2(\Omega)}\lesssim |\Omega|^{1/4} \norm{T}_{L^2(\Omega)} +M\,.$$
Since $\mu\equiv 1$ on $\Omega_k,$ the desired conclusion follows.

\end{proof}

 The energy $\mathcal{E}(\Sigma)$ cannot be assumed to be small since it is not bounded from below. Instead, we rescale our domain so as to obtain a ball $B\subset \mathbb{R}^4$ of arbitrary center and radius with
$$\vec{\norm{h}}_{L^4(B)} < \varepsilon,$$
\noindent
where $\varepsilon >0$ may be chosen as small as we please.
\noindent
A particular useful quantity  is the 2-vector-valued 2-form
$$\vec{\eta}:=d\vec{\Phi}\overset{\wedge}\wedge_4 d\vec{\Phi}.$$

\begin{prop}
It holds
\begin{align}
\norm{d^\star\vec{\eta}}_{L^4(B)} <\varepsilon  \label{ssssj}
\end{align}\end{prop}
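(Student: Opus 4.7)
The plan is to compute $d^\star\vec{\eta}$ explicitly in local coordinates and control it pointwise by $|\vec{h}|$, at which point the smallness hypothesis on $\norm{\vec{h}}_{L^4(B)}$ finishes the job. Writing $\vec{\eta}=\tfrac{1}{2}\nabla_i\vec{\Phi}\wedge\nabla_j\vec{\Phi}\,dx^i\wedge_4 dx^j$, the codifferential (with respect to the induced metric $g$) has, up to sign, $i$-th component $\nabla^j(\nabla_j\vec{\Phi}\wedge\nabla_i\vec{\Phi})$. Applying Leibniz for the Levi-Civita connection gives
$$\nabla^j(\nabla_j\vec{\Phi}\wedge\nabla_i\vec{\Phi}) = (\nabla^j\nabla_j\vec{\Phi})\wedge\nabla_i\vec{\Phi} + \nabla^j\vec{\Phi}\wedge\nabla_j\nabla_i\vec{\Phi}.$$

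Next, I would invoke the two standard identities that underpin the manipulations already performed in (\ref{scv}): the Laplace-Beltrami of the position vector is the mean curvature vector up to the dimension factor, namely $\nabla^j\nabla_j\vec{\Phi}=4\vec{H}$, and the Gauss formula tells us that the tangential Christoffel contributions to $\partial_j\partial_i\vec{\Phi}$ are killed by the covariant derivative symbol, so that $\nabla_j\nabla_i\vec{\Phi}=\vec{h}_{ji}$ is purely normal. Substituting yields the clean expression
$$(d^\star\vec{\eta})_i \;=\; 4\vec{H}\wedge\nabla_i\vec{\Phi} \;+\; \nabla^j\vec{\Phi}\wedge\vec{h}_{ji}.$$

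From here the pointwise estimate is immediate: $|\vec{H}|=\tfrac{1}{4}|g^{ij}\vec{h}_{ij}|\lesssim|\vec{h}|$, and $|d\vec{\Phi}|\lesssim 1$ thanks to the hypotheses $\vec{\Phi}\in W^{1,\infty}$ and $g_{ij}\simeq\delta_{ij}$. Therefore $|d^\star\vec{\eta}|(x)\lesssim |\vec{h}|(x)$ almost everywhere on $B$, and raising to the fourth power and integrating gives $\norm{d^\star\vec{\eta}}_{L^4(B)}\lesssim \norm{\vec{h}}_{L^4(B)}<\varepsilon$; the implicit constant is absolute and can be absorbed into the definition of the smallness parameter $\varepsilon$ on the rescaled ball. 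There is no genuine obstacle here: the whole argument is two lines of Leibniz followed by a H\"older-type pointwise bound, and the only subtlety worth flagging is the use of the Gauss formula to guarantee that $\nabla_j\nabla_i\vec{\Phi}$ carries no tangential part, an identity that is used tacitly throughout Section \ref{latty1}.
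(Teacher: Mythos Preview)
Your proof is correct and essentially identical to the paper's own argument: both compute $(d^\star\vec{\eta})_i = 4\vec{H}\wedge\nabla_i\vec{\Phi} + \nabla^j\vec{\Phi}\wedge\vec{h}_{ji}$ via the Leibniz rule and the identities $\nabla^j\nabla_j\vec{\Phi}=4\vec{H}$, $\nabla_j\nabla_i\vec{\Phi}=\vec{h}_{ji}$, and then bound the result in $L^4$ using $d\vec{\Phi}\in L^\infty$ and $\norm{\vec{H}}_{L^4}\lesssim\norm{\vec{h}}_{L^4}<\varepsilon$. Your write-up is slightly more explicit about the role of the Gauss formula, but there is no substantive difference.
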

\begin{proof}
It suffices to realise that
$$d^\star \vec{\eta}= (4\vec{H}\wedge \nabla_j\vec{\Phi}+\nabla^i\vec{\Phi}\wedge\vec{h}_{ij})\,\, dx^j$$
and $\vec{h}\in L^4(B)$, $d\vec{\Phi}\in L^\infty(B)$ and $||\vec{H}||_{L^4(B)}\lesssim ||\vec{h}||_{L^4(B)}\leq \varepsilon$.
\end{proof}

\subsection{The creation of three characteristic two-forms}  \label{amena}

Let $$E(\Omega):=\norm{DH}_{L^2(\Omega)} +\norm{H}_{L^4(\Omega)} <\infty.$$
We have seen in section \ref{latty1} using translation invariance of $\mathcal{E}(\Sigma)$ that there exists a divergence free tensor $\vec{V}$, that is $\nabla_j\vec{V}^j=\vec{0}$. Accordingly, we need to solve
$$d\star\vec{L}_0=\star\vec{V}$$
for $\vec{L}_0\in \Lambda^2(\Omega, \mathbb{R}^5)$ and understand the regularity of $\vec{L}_0$. We will call upon Proposition \ref{curca}, but we need to first understand the regularity of $\vec{V}$. The exact expression for  $\vec{V}$  (see Theorem \ref{cons-law}) reveals that
$$\vec{V}^j=\nabla^j(\Delta_\perp\vec{H})+\vec{a}^j+\nabla_k(\vec{b}^{kj})+\vec{f}^j\,\,,$$
with
\begin{align}
\norm{\vec{a}}_{L^1(\Omega)} +\vec{\norm{b}}_{L^{4/3}(\Omega)}   \lesssim E(\Omega)    \label{adq}
\end{align}
and 
$$\vec{f}^j\simeq (\vec{h}^{ij}\cdot\Delta_\perp\vec{H})\nabla_i\vec{\Phi}.$$
We can write 
$$\vec{f}=\nabla_k(h^{ij}\nabla^kH\nabla_i\vec{\Phi})-\vec{h}^{ij}h_{ik}\nabla^kH-
(\nabla^kH\nabla_kh^{ij})\nabla_i\vec{\Phi},$$
so that
$$\vec{\norm{f}}_{L^1\oplus \dot W^{-1,4/3}(\Omega)}\lesssim E(\Omega).$$
Altogether, we find
$$\vec{\norm{V}}_{\dot W^{-2,2}\oplus L^1(\Omega)}\lesssim E(\Omega),$$
where we have used that $\Delta_\perp \vec{H}\in \dot W^{-1,2}$  and the fact that $\dot W^{-1,4/3} \subset \dot W^{-2,2}$ by the dual of the Sobolev injection. Now, we call upon Proposition \ref{curca} to obtain $\vec{w}\in \Lambda^2\otimes \dot W^{-1,2}(\Omega)$ with $d^\star \vec{w}=\vec{V}$
and
\begin{align}
\vec{\norm{w}}_{\dot W^{-1,2}(\Omega)}\lesssim \vec{\norm{V}}_{\dot W^{-2,2}\oplus L^1(\Omega)}\lesssim E(\Omega).  \label{dfd}
\end{align}
We then set $\vec{L}_0:=\vec{w}$, so that $d\star\vec{L}_0=\star \vec{V}$.  From \eqref{dfd} we have that $\vec{L}_0\in \Lambda^2\otimes \dot W^{-1,2}(\Omega)$. Per Claim \ref{claimclaim},  we know that the operator $\star$ preserves  $\dot W^{-1,2}$ so that $\star\vec{L}_0\in \Lambda^2\otimes \dot W^{-1,2}(\Omega)$ with the estimate
\begin{align}
\vec{\norm{\star L_0}}_{\dot W^{-1,2}(\Omega)} \lesssim E(\Omega).  \label{tenn}
\end{align}

\noindent
Defining as in Proposition \ref{coro} the vector-valued 2-form
$$\star\vec{L}=\vec{L}_0-dH^2\wedge_4 d\vec{\Phi},$$
we see that $\vec{L}\in \dot W^{-1,2}$ with $d\vec{L}=d\star \vec{L}_0$ and the estimate
\begin{align}\vec{\norm{L}}_{\dot W^{-1,2}(\Omega)}\lesssim \vec{\norm{\star L_0}}_{\dot W^{-1,2}(\Omega)} +\norm{H}_{L^4(\Omega)}\norm{\nabla H}_{L^2(\Omega)}\overset{\eqref{tenn}}\lesssim  E(\Omega),       \label{ax}
\end{align}
where we have used that $L^{4/3}$ injects continuously into $\dot W^{-1,2}$ which is a consequence of the Sobolev injection $W_0^{1,2}\subset L^4$. 
\noindent
As in Proposition \ref{coro}, there exists 2-forms $S_0$ and $\vec{R}_0$ satisfying
$$dS_0=\vec{L}\overset{.}\wedge_4 d\vec{\Phi} \quad\mbox{and}\quad d\vec{R}_0= \vec{L}\overset{\wedge}\wedge_4 d\vec{\Phi}+\star d\vec{u}-(\vec{J}+8H^2\vec{H})\wedge \star d\vec{\Phi}$$
where $\vec{J}:=\frac{1}{2}\Delta_\perp \vec{H}+\frac{1}{2}|\vec{h}|^2\vec{H} -7|\vec{H}|^2\vec{H}$.

\noindent
In addition, we are free to demand $d^\star S_0=0$ and $d^\star\vec{R}_0=\vec{0}$.
\\
Next is to understand the regularity of $\vec{R}_0$ and $S_0$. Recalling the definition of $\vec{J}$ (see Proposition \ref{coro}) and the fact that $\Delta_\perp\vec{H}\in \dot W^{-1,2}$, we find that $\vec{J}\in  \dot W^{-1,2}$. It remains to study the regularity of $d\vec{u}$.
To this end we have defined (see Proposition \ref{coro})
$$\frac{5}{3}H^2 d^\star\vec{\eta}= d\vec{u} +d^\star\vec{v},\quad\quad \vec{u}\in\Lambda^0(\Omega, \Lambda^2(\mathbb{R}^5)).$$
Clearly $d^\star\vec{u}=\vec{0}$.  Let $\vec{\Theta}\in L^2(\Omega)$ be arbitrary. Consider the problem
$$\Delta \vec{\theta}=\vec{\Theta} \quad \mbox{on}\,\, \Omega\quad\mbox{with}\,\,\, \vec{\theta}|_{\partial \Omega}=\vec{0}.$$

\noindent
Proposition \ref{callit} confirms that
\begin{gather}
\vec{\norm{D\theta}}_{L^4(\Omega)}  \lesssim \vec{\norm{\theta}}_{W_0^{2,2}(\Omega)} \lesssim \vec{\norm{\Theta}}_{L^2(\Omega)}.  \label{klnb}
\end{gather}

\noindent
Observe that
\begin{align}
\langle \vec{\Theta}, \vec{u} \rangle&= \langle \Delta\vec{\theta}, \vec{u}\rangle =\langle d\vec{\theta}, d\vec{u} \rangle   \nonumber
\\&= \langle d\vec{\theta}, \frac{5}{3}H^2 d^\star \vec{\eta}- d^\star \vec{v} \rangle   \nonumber
\\&= \frac{5}{3}\langle d\vec{\theta}, H^2 d^\star\vec{\eta}\rangle  \nonumber
\\&\lesssim \norm{H^2 d^\star\vec{\eta}}_{L^{4/3}(\Omega)}  \vec{\norm{\theta}}_{L^4(\Omega)}      \nonumber
\\&\lesssim \norm{H^2 d^\star\vec{\eta}}_{L^{4/3}(\Omega)} \vec{\norm{\Theta}}_{L^2(\Omega)}.  \nonumber
\end{align}
This shows that
\begin{align}
\vec{\norm{u}}_{L^2(\Omega)} \lesssim \norm{H^2 d^\star\vec{\eta}}_{L^{4/3}(\Omega)}  \lesssim \norm{H}^2_{L^4(\Omega)} \norm{d^\star\vec{\eta}}_{L^4(\Omega)}  \overset{\eqref{ssssj}}\lesssim \varepsilon E(\Omega).  \label{zcx}
\end{align}

\noindent
We have observed from equation \eqref{hads8} that $\vec{u}\in \Lambda^0(\Omega, \mathbb{R}^5)$ satisfies
$$\Delta \vec{u}=\frac{5}{3}d^\star (H^2 d^\star \vec{\eta}),$$
which is of the type investigated in Proposition  \ref{adz}.
\\
We split $\vec{u}=\vec{u}_0+\vec{u}_1$ where 
\begin{equation*}
 \begin{cases}
           \Delta \vec{u}_0=\vec{0} & \mbox{in}\,\, \Omega \\
            \,\,\,\,\, \vec{u}_0= \vec{u} & \mbox{on} \,\, \partial \Omega,
       \end{cases} \quad
\quad\quad\mbox{and} \quad\quad\begin{cases}
            \Delta \vec{u}_1=\Delta\vec{u} & \mbox{in}\,\, \Omega \\
            \,\,\,\,\, \vec{u}_1= \vec{0} & \mbox{on} \,\, \partial \Omega,
       \end{cases}
\end{equation*}
The equation for $\vec{u}_1$ is handled as in Proposition \ref{adz}. We find
\begin{align}
\norm{D\vec{u}_1}_{L^{4/3}(\Omega)}  \lesssim \norm{H^2}_{L^2(\Omega)}\norm{d^\star\vec{\eta}}_{L^4(\Omega)} \overset{\eqref{ssssj}} \lesssim \varepsilon E(\Omega).   \label{vcb}
\end{align}
In particular, the Sobolev embedding theorem gives
\begin{align}
\norm{\vec{u}_1}_{L^2(\Omega)} \lesssim \varepsilon E(\Omega).  \nonumber
\end{align}
Hence by \eqref{zcx}
\begin{align}
\norm{\vec{u}_0}_{L^2(\Omega)} \lesssim \varepsilon E(\Omega)   . \label{pio}
\end{align}
On the other hand, the Caccioppoli inequality yields
\noindent
\begin{align}
\norm{D\vec{u}_0}_{L^{4/3}(\Omega ')} \lesssim \norm{D\vec{u}_0}_{L^2(\Omega ')} \lesssim \norm{\vec{u}_0}_{L^2(\Omega)} \overset{\eqref{pio}}\lesssim \varepsilon E(\Omega)  \quad\quad\forall\,\,\,\Omega '\Subset \Omega 
\end{align}

\noindent
Together with \eqref{vcb}, the latter gives
\begin{align}
\norm{D\vec{u}}_{L^{4/3}(\Omega ')}  \lesssim \varepsilon E(\Omega) \quad\quad \forall\,\,\, \Omega '\Subset \Omega.   \label{host}
\end{align}

\noindent
Using Claim \ref{claimclaim} and the estimates we have of the quantities $\vec{u}$, $\vec{J}$ and $\vec{L}$, we are now ready to state the first regularity property:
$$dS_0\,\, ,\,\, d\vec{R}_0\in\dot W^{-1,2}(\Omega)$$

\noindent
with the estimate
\begin{align}
\norm{dS_0}_{\dot W^{-1,2}(\Omega)} +||d\vec{R}_0|| _{\dot W^{-1,2}(\Omega)} \lesssim E(\Omega_1)    \quad\quad\forall\,\, \Omega\Subset \Omega_1.   \label{simm}
\end{align}

\noindent
Since $S_0$ and $\vec{R}_0$ are co-closed, by Proposition \eqref{111}, we obtain 
\begin{align}
\norm{S_0}_{L^2(\Omega)} +||\vec{R}_0||_{L^2(\Omega)} \lesssim E(\Omega_1) \quad\quad \Omega\Subset \Omega_1.  \label{ax1}
\end{align}

\begin{rmk}  \label{rama}
Note that
\begin{align}
\norm{d^\star\vec{v}}_{L^{4/3}(\Omega ')} \lesssim \norm{d\vec{u}}_{L^{4/3}(\Omega ')} + \norm{H}^2_{L^4(\Omega)} \norm{d^\star\vec{\eta}}_{L^4(\Omega)} \overset{\eqref{ssssj}, \eqref{host}} \lesssim \varepsilon E(\Omega_1).
\end{align}
\end{rmk}

\noindent
Our next goal is to obtain an estimate similar to \eqref{simm} but with $L^{4/3}$ norm in place of $\dot W^{-1,2}$. In order to do this, we first introduce an operator $\vec{\mathcal{P}}$ which helps to redefine $\vec{L}$.
Next, we redefine the two-forms $S_0$ and $\vec{R}_0$ in order to create $S$ and $\vec{R}$ respectively. It turns out that $dS$ and $d\vec{R}$ can then be estimated in $L^{4/3}$ as desired (this is done in Section \ref{toute}). The remaining part of the present section is devoted to introducing $\vec{\mathcal{P}}$ and redefining $\vec{L}$, $S_0$ and $\vec{R}_0$.

\begin{prop} \label{conty}
The operator 
$$\mathcal{\vec{P}}: \Lambda^1(\Sigma, \Lambda^1(\mathbb{R}^5))\longrightarrow \Lambda^2(\Sigma,\Lambda^2(\mathbb{R}^5))\quad\quad\mbox{defined by}\quad\quad \mathcal{\vec{P}}:\vec{\ell}\longmapsto \vec{\ell}\overset{\wedge}\wedge_4 d\vec{\Phi}$$
is algebraically invertible. Moreover \,$\mathcal{\vec{P}}^{-1}$ injects $L^2$ into itself continuously.
\end{prop}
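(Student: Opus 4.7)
My plan is to invert $\vec{\mathcal{P}}$ fibrewise by splitting $\vec{\ell}$ into its tangential and normal components and extracting each by explicit bivector contractions. Since $\vec{\Phi}$ is non-degenerate and belongs to $W^{1,\infty}$, the metric $g$ and its inverse $g^{-1}$, the tangent frame $\{\nabla_i\vec{\Phi}\}$ and the unit normal $\vec{n}$ all lie in $L^{\infty}$, so any inversion formula built from these ingredients will automatically yield $L^2\to L^2$ continuity.

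First I would write
\begin{align*}
\vec{\ell}_i=\ell_i^{\,k}\,\nabla_k\vec{\Phi}+f_i\,\vec{n},\qquad \ell_i^{\,k}:=g^{km}\,\vec{\ell}_i\cdot\nabla_m\vec{\Phi},\qquad f_i:=\vec{\ell}_i\cdot\vec{n},
\end{align*}
and expand $A_{ij}:=\vec{\ell}_i\wedge\nabla_j\vec{\Phi}-\vec{\ell}_j\wedge\nabla_i\vec{\Phi}$ in the orthogonal bivector basis $\{\nabla_k\vec{\Phi}\wedge\nabla_\ell\vec{\Phi},\ \vec{n}\wedge\nabla_k\vec{\Phi}\}$ of $\Lambda^2(\mathbb{R}^5)$. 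Pairing $A_{ij}$ in $\mathbb{R}^5$ with the mixed bivector $\vec{n}\wedge\nabla_m\vec{\Phi}$ annihilates the purely tangential piece and leaves $f_i\,g_{jm}-f_j\,g_{im}$; a single $g^{jm}$-trace returns $3f_i$, recovering the normal coefficient. Pairing $A_{ij}$ with the tangential bivector $\nabla_p\vec{\Phi}\wedge\nabla_m\vec{\Phi}$ kills the mixed piece and leaves $\ell_{ip}g_{jm}-\ell_{im}g_{jp}-\ell_{jp}g_{im}+\ell_{jm}g_{ip}$ where $\ell_{ip}:=\ell_i^{\,k}g_{kp}$. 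A first $g^{jm}$-trace reduces this to $B_{ip}:=2\ell_{ip}+(\mathrm{tr}\,\ell)\,g_{ip}$, and a further $g^{ip}$-trace isolates $\mathrm{tr}\,\ell=\tfrac{1}{6}g^{ip}B_{ip}$, so that $\ell_i^{\,k}=\tfrac{1}{2}g^{kp}B_{ip}-\tfrac{1}{12}\delta^k_i\,g^{ab}B_{ab}$. This produces an explicit pointwise left inverse of $\vec{\mathcal{P}}$ and establishes the algebraic invertibility.

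For the $L^2$ continuity, the two recovery formulas above express $\vec{\ell}$ as a pointwise linear combination of the components of $A=\vec{\mathcal{P}}(\vec{\ell})$, with coefficients built polynomially from $g^{ij}$, $\nabla_i\vec{\Phi}$ and $\vec{n}$. By the non-degeneracy assumption $g\simeq\delta$ and the hypothesis $\vec{\Phi}\in W^{1,\infty}$, these coefficients all lie in $L^{\infty}$ with bounds depending only on the immersion, so the estimate
\begin{align*}
\|\vec{\ell}\|_{L^2(\Sigma)}\ \lesssim\ \|\vec{\mathcal{P}}(\vec{\ell})\|_{L^2(\Sigma)}
\end{align*}
is immediate by Cauchy--Schwarz, proving that $\vec{\mathcal{P}}^{-1}$ maps $L^2$ into itself continuously.

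The only delicate point is the index bookkeeping in the two bivector contractions and the verification that the linear map $\ell_{ip}\mapsto 2\ell_{ip}+(\mathrm{tr}\,\ell)g_{ip}$ is invertible on 2-tensors (it is: the trace-free part is multiplied by $2$ and the trace part by $6$, both non-zero). I do not expect any analytical obstacle beyond this purely algebraic check.
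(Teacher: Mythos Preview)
Your proposal is correct and follows essentially the same route as the paper. Both arguments recover $\vec{\ell}$ from $\vec{\mathcal{P}}(\vec{\ell})$ by contracting against the bivectors $\vec{n}\wedge\nabla^q\vec{\Phi}$ and $\nabla^i\vec{\Phi}\wedge\nabla^q\vec{\Phi}$ and tracing, arriving at the identical inversion formula with the constants $\tfrac{1}{3}$, $\tfrac{1}{2}$, $\tfrac{1}{6}$, $\tfrac{1}{12}$; the $L^2$ continuity is declared trivial in the paper for exactly the reason you give, namely that the coefficients in the inversion formula are built from $g^{ij}$, $\nabla_i\vec{\Phi}$ and $\vec{n}$, all of which lie in $L^\infty$.
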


\begin{proof}
We have 
$$\mathcal{\vec{P}}_{pq}:=\big(\mathcal{\vec{P}}(\vec{\ell})  \big)_{pq} =\vec{\ell}_p\wedge \nabla_q\vec{\Phi}-\vec{\ell}_q\wedge\nabla_p\vec{\Phi}.$$
\noindent
Let $\vec{n}$ be the normal vector to the Willmore-type hypersurface $\Sigma$, defined on a small patch $\Omega \subset \mathbb{R}^4$ by
$$\vec{n}:=\star_{\mathbb{R}^5}\star (d\vec{\Phi}\overset{\wedge}\wedge_4 d\vec{\Phi}\overset{\wedge}\wedge_4 d\vec{\Phi}\overset{\wedge}\wedge_4 d\vec{\Phi})\in \Lambda^0(\Omega, \Lambda^1(\mathbb{R}^5))$$
where $\star_{\mathbb{R}^5}$ acts on the 4-vectors in the ambient space while $\star$ acts on the 4-forms in the parameter space.

\noindent
We compute
\begin{align}
\mathcal{\vec{P}}_{pq}\cdot (\vec{n}\wedge \nabla^q\vec{\Phi})&= (\vec{\ell}_p\wedge \nabla_q\vec{\Phi}-\vec{\ell}_q\wedge\nabla_p\vec{\Phi})\cdot (\vec{n}\wedge\nabla^q\vec{\Phi})             \nonumber
\\&= 3\vec{n}\cdot\vec{\ell}_p\,, \nonumber
\end{align}
so that
\begin{gather}
\vec{\eta}\cdot\vec{\ell}_p =\frac{1}{3}\mathcal{\vec{P}}_{pq}\cdot (\vec{n}\wedge\nabla^q\vec{\Phi}).   \label{ta}
\end{gather}

\noindent
On the other hand, we have
\begin{align}
\mathcal{\vec{P}}_{pq}\cdot (\nabla^p\vec{\Phi}\wedge \nabla^q\vec{\Phi})&= (\vec{\ell}_p\wedge \nabla_q\vec{\Phi}-\vec{\ell}_q\wedge \nabla_p\vec{\Phi})\cdot (\nabla^p\vec{\Phi}\wedge \nabla^q\vec{\Phi})   \nonumber
\\&= 6\vec{\ell}_p\cdot\nabla^p\vec{\Phi}\,,   \nonumber
\end{align}
so that
\begin{gather}
\vec{\ell}_p\cdot\nabla^p\vec{\Phi} =\frac{1}{6} \mathcal{\vec{P}}_{pq} \cdot (\nabla^p\vec{\Phi}\wedge \nabla^q\vec{\Phi}).     \label{sothat}
\end{gather}

\noindent
Finally, we check that
\begin{align}
\mathcal{\vec{P}}_{pq}\cdot (\nabla^i\vec{\Phi}\wedge \nabla^q\vec{\Phi}) &= (\vec{\ell}_p\wedge \nabla_q\vec{\Phi}-\vec{\ell}_q\wedge \nabla_p\vec{\Phi})\cdot (\nabla^i\vec{\Phi}\wedge \nabla^q\vec{\Phi})   \nonumber
\\&= 2\vec{\ell}_p\cdot \nabla^i\vec{\Phi} +\delta^i_p \vec{\ell}_q\cdot \nabla^q\vec{\Phi}   \nonumber
\\&\overset{\eqref{sothat}}=2\vec{\ell}_p\cdot \nabla^i\vec{\Phi} +\frac{1}{6}\delta^i_p \mathcal{\vec{P}}_{sq} \cdot (\nabla^s\vec{\Phi}\wedge \nabla^q\vec{\Phi}),  \nonumber
\end{align}
so that
\begin{align}
\vec{\ell}_p\cdot \nabla^i\vec{\Phi} =\frac{1}{2}\mathcal{\vec{P}}_{pq}\cdot (\nabla^i\vec{\Phi}\wedge \nabla^q\vec{\Phi})  -\frac{1}{12}\delta^i_p \mathcal{\vec{P}}_{sq}\cdot (\nabla^s\vec{\Phi}\wedge \nabla^q\vec{\Phi}).    \label{ta1}
\end{align}

\noindent
According to \eqref{ta} and \eqref{ta1}, the 1-form $\vec{\ell}$ can be totally recovered from $\mathcal{\vec{P}}$ via
$$\vec{\ell}_p=\left[\frac{1}{2}\mathcal{\vec{P}}_{pq}\cdot (\nabla^i\vec{\Phi}\wedge \nabla^q\vec{\Phi})  -\frac{1}{12}\delta^i_p \mathcal{\vec{P}}_{sq}\cdot (\nabla^s\vec{\Phi}\wedge \nabla^q\vec{\Phi})  \right]\nabla_i\vec{\Phi} +\left[\frac{1}{3}\mathcal{\vec{P}}_{pq}\cdot (\vec{n}\wedge\nabla^q\vec{\Phi})  \right]\vec{n}.$$
That $\mathcal{\vec{P}}^{-1}$ maps $L^2$ onto itself continuously is trivial.

\end{proof}

\noindent
Observe that given any $\vec{\mu}\in\Lambda^2(\Omega, \Lambda^2(\mathbb{R}^5))\otimes L^2(\Omega)$, we can define
\begin{align}
\vec{\ell}:= \mathcal{\vec{P}}^{-1}(\vec{\mu}).   \label{jert}
\end{align}

\noindent
We now create a new 2-form 
$$\vec{L}_1:=\vec{L}+ d\vec{\ell}$$
satisfying
\begin{gather}
\vec{\norm{L_1}}_{\dot W^{-1,2}(\Omega)}  \lesssim \vec{\norm{L}}_{\dot W^{-1,2}(\Omega)} + \vec{\norm{\ell}}_{L^2(\Omega)}  \overset{\eqref{ax}} \lesssim E(\Omega_1) +||\vec{\mu}||_{L^2(\Omega)}.               \nonumber
\end{gather}

\noindent
Consider now 
\begin{align}
S:=S_0+\vec{\ell}\overset{.}\wedge_4 d\vec{\Phi} \quad\quad\mbox{and}\quad\quad \vec{R}:=\vec{R}_0+\vec{\ell}\overset {\wedge}\wedge_4 d\vec{\Phi}. \label{merio}
\end{align}
We immediately see that
$$dS=\vec{L}_1\overset{.}\wedge_4 d\vec{\Phi}\quad\mbox{and}\quad d\vec{R}= \vec{L}_1\overset{\wedge}\wedge_4 d\vec{\Phi} +\star d\vec{u} -(\vec{J} +8H^2\vec{H})\wedge\star d\vec{\Phi}.$$

\subsection{Estimating $S$ and $\vec{R}$ in the correct space}  \label{toute}

From section \ref{latty1}, Proposition \ref{coro}, the two-forms $S_0$ and $\vec{R}_0$ satisfy 
$$dS_0= \vec{L}\overset{\cdot}\wedge_4 d\vec{\Phi}\quad\quad\mbox{and}\quad\quad d\vec{R}_0= \vec{L}\overset{\wedge}\wedge_4 d\vec{\Phi} +\star d\vec{u}-\vec{J}_0\wedge\star d\vec{\Phi}$$
where for convenience, we have set $\vec{J}_0:= \vec{J}+8H^2\vec{H}.$

\noindent
We compute
\begin{align}
\quad\star\vec{\eta}\overset{\cdot}\wedge_4\star(d\vec{R}_0-\star d\vec{u} +\vec{J}_0\wedge\star d\vec{\Phi})  \nonumber
& =\epsilon_{ijk[r} \epsilon_{ab]pq} \vec{L}^{ij}\cdot\nabla^p\vec{\Phi} \, g^{kq}\,\,\, dx^{a}\wedge_4 dx^b\wedge_4 dx^r  \nonumber
\\&= \vec{L}_{[ab}\cdot\nabla_{r]}\vec{\Phi}  \,\,\,dx^a\wedge_4 dx^b\wedge_4 dx^r   \nonumber  
\\&=dS_0.  \nonumber
\end{align}

\noindent
As $\vec{J}_0$ is a normal vector, we have 
$$\star\vec{\eta}\overset{\cdot}\wedge_4 (\vec{J}_0\wedge \star d\vec{\Phi})=0.$$

\noindent
Hence 
\begin{align}
dS_0 =\star\vec{\eta} \overset{\cdot} \wedge_4 \star d\vec{R}_0 -\star\vec{\eta} \overset{\cdot}\wedge_4 d\vec{u}.  \label{rela}
\end{align}

\noindent
By a similar token, we find (see Appendix \ref{grace5} for the full computation)
\begin{align}
&\quad\star\vec{\eta} \overset{\bullet} \wedge_4 \star (d\vec{R}_0-\star d\vec{u} +\vec{J}_0\wedge \star d\vec{\Phi})    \nonumber
\\&=\epsilon_{ijk[r} \epsilon_{ab]pq} (\nabla^p\vec{\Phi}\wedge \nabla^q\vec{\Phi})  \bullet (\vec{L}^{ij}\wedge \nabla^k\vec{\Phi})  \,\,\, dx^a\wedge_4 dx^b\wedge_4 dx^r   \nonumber
\\&=\epsilon_{ijk[r} \epsilon_{ab]pq} \left[(\nabla^p\vec{\Phi}\cdot \vec{L}^{ij}) \nabla^q\vec{\Phi}\wedge \nabla^k\vec{\Phi} + g^{qk} \nabla^p\vec{\Phi}\wedge \vec{L}^{ij} \right]    \nonumber
\\&= \star\vec{\eta}\wedge_4 \star dS_0 -(d\vec{R}_0-\star d\vec{u} +\vec{J}_0\wedge \star d\vec{\Phi})  +\vec{Q}_0   \label{kli}
\end{align}

\noindent
where $\vec{Q}_0\in \Lambda^3(\Omega, \Lambda^2(\mathbb{R}^5))$ is given by
$$\vec{Q}_0:=\frac{1}{12}\delta^{\alpha\beta\gamma}_{abr}\left[(\vec{L}_{k\alpha}\cdot\nabla^k\vec{\Phi}) 
\nabla_\beta\vec{\Phi} -(\vec{L}_{k\alpha}\cdot\nabla_\beta\vec{\Phi})\nabla^k\vec{\Phi}  \right]\wedge \nabla_\gamma \vec{\Phi}\,\,\,\, dx^a\wedge_4 dx^b\wedge_4 dx^r.$$

\noindent
As done in Appendix \ref{grace5}, we find
$$\star\vec{\eta}\overset {\bullet}\wedge_4 (\vec{J}_0\wedge d\vec{\Phi})=-\vec{J}_0\wedge \star d\vec{\Phi}.$$

\noindent
Hence from equation \eqref{kli}, we have
\begin{align}
d\vec{R}_0=\star\vec{\eta}\overset{\bullet}\wedge_4 \star d\vec{R}_0+ \star\vec{\eta}\wedge_4 \star dS_0 -\star\vec{\eta}\overset{\bullet}\wedge_4 d\vec{u} -\star d\vec{u} +\vec{Q}_0.  \label{kakio}
\end{align}

\noindent
We continue with one trivial identity which holds by the antisymmetry of $(\star\vec{R}_0)$ and $(\star S_0)$,
\begin{align}
&\quad\quad(\nabla_l\vec{\eta}_{ij}+\nabla_j\vec{\eta}_{il})\bullet(\star\vec{R}_0)^{lj}   + (\nabla_l\vec{\eta}_{ij}+\nabla_j\vec{\eta}_{il})(\star S_0)^{lj} \nonumber
\\&=(\nabla_l\vec{\eta}_{ij}-\nabla_l\vec{\eta}_{ij})\bullet(\star\vec{R}_0)^{lj}   + (\nabla_l\vec{\eta}_{ij}-\nabla_l\vec{\eta}_{ij})(\star S_0)^{lj} \nonumber
\\&=\vec{0}.  \nonumber
\end{align}

\noindent
As $d\vec{\eta}=\vec{0}$, that is, $\nabla_i\vec{\eta}_{jl} +\nabla_j\vec{\eta}_{li} +\nabla_l\vec{\eta}_{ij}=\vec{0}$, the latter yields
\begin{align}
 (-\nabla_i\vec{\eta}_{jl}+2\nabla_j\vec{\eta}_{il})\bullet(\star\vec{R}_0)^{lj} +(-\nabla_i\vec{\eta}_{jl}+2\nabla_j\vec{\eta}_{il})(\star S_0)^{lj} =\vec{0}.  \label{lassz}
\end{align}

\noindent
We next compute
\begin{align}
&\quad\nabla^j(\vec{\eta}_{jl}\bullet (\star\vec{R}_0)^{il}+\vec{\eta}_{jl}(\star S_0)^{il})   \nonumber
\\&= \vec{\eta}_{jl}  \bullet \nabla^j(\star\vec{R}_0)^{il} + (d^\star\vec{\eta})_l \bullet(\star\vec{R}_0)^{il}    \nonumber
+\vec{\eta}_{jl}  \nabla^j(\star{S}_0)^{il} + (d^\star\vec{\eta})_l (\star{S}_0)^{il}     \nonumber
\\&= -\vec{\eta}_{jl}\bullet \big(\nabla^l(\star\vec{R}_0)^{ji} +\nabla^i(\star\vec{R}_0)^{lj}  \big)   +(d^\star\vec{\eta})_l\bullet (\star\vec{R}_0)^{il}    \nonumber
\\&\quad \quad  -\vec{\eta}_{jl} \big(\nabla^l(\star{S}_0)^{ji} +\nabla^i(\star{S}_0)^{lj}  \big)   + (d^\star\vec{\eta})_l  (\star{S}_0)^{il}  \label{okka}
\\&= -\nabla^l\big( \vec{\eta}_{jl} \bullet (\star \vec{R}_0)^{ji}  \big) +\nabla^l\vec{\eta}_{jl}\bullet (\star\vec{R}_0)^{ji} +(d^\star\vec{\eta})_l\bullet (\star\vec{R}_0)^{il}    \nonumber
\\&\quad\quad+\nabla^i\big(\vec{\eta}_{lj}\bullet(\star\vec{R}_0)^{lj}  \big)  - \nabla^i\vec{\eta}_{lj}\bullet(\star \vec{R}_0)^{lj}  \nonumber
\\&\quad\quad\quad
-\nabla^l\big( \vec{\eta}_{jl}  (\star {S}_0)^{ji}  \big) +\nabla^l\vec{\eta}_{jl}(\star S_0)^{ji} +(d^\star\vec{\eta})_{l}(\star S_0)^{il}  \nonumber
\\&\quad\quad\quad\quad+\nabla^i\big(\vec{\eta}_{lj}(\star{S}_0)^{lj}  \big)  -  \nabla^i\vec{\eta}_{lj} (\star {S}_0)^{lj}   \nonumber
\\&= -\nabla^j\big( \vec{\eta}_{jl}\bullet (\star\vec{R}_0)^{il} +\vec{\eta}_{jl} (\star S_0)^{il}  \big) + \nabla^i\big( \vec{\eta}_{jl}\bullet (\star\vec{R}_0)^{jl} +\vec{\eta}_{jl} (\star S_0)^{jl}  \big)  \nonumber
\\& \quad\quad - \nabla^i\vec{\eta}_{lj}\bullet (\star \vec{R}_0)^{lj}  -\nabla^i\vec{\eta}_{lj}(\star S_0)^{lj}  +2(d^\star\vec{\eta})_l\bullet (\star \vec{R}_0)^{il} +2(d^\star\vec{\eta})_l(\star {S}_0)^{il} \nonumber
\end{align}
where we have used that $S_0$ and $\vec{R}_0$ are co-closed to obtain \eqref{okka}.

\noindent
Hence
\begin{align}
\nabla^j\big( \vec{\eta}_{jl}\bullet (\star\vec{R}_0)^{il} +\vec{\eta}_{jl} (\star S_0)^{il}  \big)& = \frac{1}{2}\nabla^i\big( \vec{\eta}_{jl}\bullet (\star\vec{R}_0)^{jl} +\vec{\eta}_{jl} (\star S_0)^{jl}  \big)  \nonumber
\\&\quad -\frac{1}{2}\left(  \nabla^i\vec{\eta}_{lj}\bullet(\star \vec{R}_0)^{lj}  +\nabla^i\vec{\eta}_{lj}(\star S_0)^{lj}\right) \nonumber
\\&\quad\quad +(d^\star\vec{\eta})_l\bullet (\star \vec{R}_0)^{il} +(d^\star\vec{\eta})_l(\star {S}_0)^{il} .  \label{tyq}
\end{align}

\noindent
This yields
\begin{align}
&\quad\quad\left( \star(\star\vec{\eta}\overset{\bullet}\wedge_4 \star d\vec{R}_0 +\star\vec{\eta}\wedge_4 \star dS_0  )  \right)^i   \nonumber
\\&= \epsilon^{jkli}\frac{1}{2}\epsilon_{jkpq}\big( \vec{\eta}^{pq}\bullet \nabla^r(\star\vec{R}_0)_{rl} +\vec{\eta}^{pq} \nabla^r(\star S_0)_{rl}   \big)  \nonumber
   \\&  =2\vec{\eta}^{li}\bullet \nabla^j(\star \vec{R}_0)_{jl}  +2\vec{\eta}^{li} \nabla^j(\star S_0)_{jl}\nonumber
\\&=2 \nabla^j\left (\vec{\eta}^{li}\bullet (\star{R}_0)_{jl}  +\vec{\eta}^{li} (\star{S}_0)_{jl}      \right)   -2\nabla^j\vec{\eta}^{li} \bullet(\star\vec{R}_0)_{jl}   -2\nabla^j\vec{\eta}^{li}(\star S_0)_{jl}   \nonumber
\\&\overset{\eqref{tyq}}=2 \nabla^j\left (\vec{\eta}_{jl}\bullet (\star\vec{R}_0)^{il}-\vec{\eta}^{il}\bullet (\star{R}_0)_{jl}  
+\vec{\eta}_{jl} (\star {S}_0)^{il} -\vec{\eta}^{il} (\star{S}_0)_{jl}     \right)  \nonumber
\\&\quad\quad - \nabla^i\left( \vec{\eta}_{jl}\bullet (\star\vec{R}_0)^{jl}   + \vec{\eta}_{jl} (\star{S}_0)^{jl}\right) +\left(\nabla^i\vec{\eta}_{lj}\bullet (\star\vec{R}_0)^{lj} +\nabla^i\vec{\eta}_{lj} (\star{S}_0)^{lj}   \right) \nonumber
\\&\quad\quad\quad -2\nabla^j\vec{\eta}^{li}\bullet(\star\vec{R}_0)_{jl}       -2\nabla^j\vec{\eta}^{li}(\star{S}_0)_{jl}       -2(d^\star\vec{\eta})_l\bullet (\star \vec{R}_0)^{il} -2(d^\star\vec{\eta})_l (\star {S}_0)^{il}  \nonumber
\\&\overset{\eqref{lassz}}= \nabla^j\tensor{\vec{P}}{_j^i}  +\nabla^i\vec{G}  -4\nabla^j\vec{\eta}^{li}\bullet(\star\vec{R}_0)_{jl}       -4\nabla^j\vec{\eta}^{li}(\star{S}_0)_{jl}  -2(d^\star\vec{\eta})_l\bullet (\star \vec{R}_0)^{il} -2(d^\star\vec{\eta})_l (\star {S}_0)^{il}   \nonumber
\\&=\nabla^j\tensor{\vec{P}}{_j^i}  +\nabla^i\vec{G} +(\star\vec{Q}_1)^i,   \nonumber
\end{align}
where
\begin{align}
\vec{P}_{ij}:= \tensor{\vec{\eta}}{_{j}^l}\bullet \tensor{(\star\vec{R}_0)}{_{i}_l}-\tensor{\vec{\eta}}{_{i}^l}\bullet (\star{R}_0)_{jl}  
+\tensor{\vec{\eta}}{_{j}^l} (\star {S}_0)_{il} -\tensor{\vec{\eta}}{_{i}^l} (\star{S}_0)_{jl}
\end{align}
is the component of the two-vector valued 2-form $\vec{P}:=\vec{P}_{ij}\,\, dx^i\wedge_4 dx^j$, 
$$\vec{G}:= \vec{\eta}_{lj}\bullet (\star\vec{R}_0)^{jl}   + \vec{\eta}_{lj} (\star{S}_0)^{jl}   \,\,\,\in\Lambda^0(\mathbb{R}^4, \Lambda^2(\mathbb{R}^5))$$
and
$$(\star\vec{Q}_1)^i:=-4\nabla^j\vec{\eta}^{li}\bullet(\star\vec{R}_0)_{jl}       -4\nabla^j\vec{\eta}^{li}(\star{S}_0)_{jl}  -2(d^\star\vec{\eta})_l\bullet (\star \vec{R}_0)^{il} -2(d^\star\vec{\eta})_l (\star {S}_0)^{il}  .$$

\noindent
Introducing \eqref{kakio}, we find
\begin{align}
\star d\vec{R}_0=d^\star \vec{P} +d\vec{G} +\vec{U} +\star\vec{Q},    \label{zads}
\end{align}
where for notational convenience we have set
\begin{align}
\vec{U}:=\star(\star\vec{\eta}\overset{\bullet}\wedge_4 d\vec{u} +\star d\vec{u}),  \quad\quad\vec{Q}:=\vec{Q}_0+\vec{Q}_1.  \label{notational}
\end{align}

\noindent
By estimates \eqref{ax}, \eqref{ax1} and Claim \eqref{claimclaim}, we have
\begin{align}
||\vec{Q}||_{\dot W^{-1,2}(\Omega)}  \lesssim ||\vec{L}||_{\dot W^{-1,2}(\Omega)} +||\vec{R}_0||_{L^2(\Omega)} +||S_0||_{L^2(\Omega)}   \overset{\eqref{ax},\eqref{ax1}}\lesssim E(\Omega_1).\label{lives}
\end{align}

\noindent
Standard Hodge theory gives the decomposition
$$\vec{Q}=d\vec{A}+d^\star \vec{B}$$
$$\mbox{with}\quad d^\star\vec{A}=\vec{0}\,,\,d\vec{B}=\vec{0},  \quad\quad\quad \vec{A}\in\Lambda^2(\mathbb{R}^4, \Lambda^2(\mathbb{R}^5)), \vec{B}\in\Lambda^4(\mathbb{R}^4, \Lambda^2(\mathbb{R}^5)).$$

\noindent
We will now write $\vec{R}$ in terms of $\vec{G}$, $\star{B}$ and $\vec{U}$ and according to the definition \eqref{jert} of $\vec{\ell}$. 
Owing to Proposition  \ref{conty}, we define
$$\vec{\ell}:= \mathcal{P}^{-1}(-\vec{A}-\star\vec{P}).$$
\noindent
By Proposition \ref{nadaa}, we know that $\vec{A}$ belongs to $L^2$; by estimate \eqref{ax1} and equation \eqref{zads}, we know that $\star\vec{P}$ also belongs to $L^2$. Thus the sum $\vec{A}+\star\vec{P}$ belongs to $ L^2$ and now $\vec{\ell}$ is a suitable candidate to redefine $\vec{L}$.

\noindent
Accordingly, from \eqref{zads} we have
\begin{align}
d(\vec{G}+\star\vec{B}) +\vec{U} = \star d(\vec{R}_0-\vec{A}-\star\vec{P}) =\star d(\vec{R}_0+\vec{\ell}\overset{\wedge}\wedge_4 d\vec{\Phi}) \overset{\eqref{merio}}=\star d\vec{R} .   \label{hags}
\end{align}

\noindent
We finish this subsection with estimates. Let $\Omega_1$ appearing in \eqref{ax1}   be a ball $B_r$ of fixed radius $r$. Let $\vec{F}:= \vec{G}+\star\vec{B}$, then we have

\begin{lem}\label{delaa}
It holds
\begin{align}
||d\vec{F}||_{L^{4/3}(B_{kr})}\lesssim (k+\varepsilon)E(B_r)\quad\quad\forall\,\,0<k<1/2.    \nonumber
\end{align}
\end{lem}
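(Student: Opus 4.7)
The plan is to turn the identity \eqref{hags} into a clean Poisson equation for $\vec{F}$ whose right-hand side is small in $L^{4/3}$, then run a harmonic--inhomogeneous splitting on a slightly enlarged ball to extract the $k$ factor from interior regularity. The crucial observation is that $\vec{F}=\vec{G}+\star\vec{B}$ is a $0$-form in the base (valued in $\Lambda^2(\mathbb{R}^5)$), so $d^\star\vec{F}=0$ holds automatically. Applying $d^\star$ to \eqref{hags}, the unknown $d\vec{R}$ contribution disappears since $d^\star\star d\vec{R}=\pm\star(dd\vec{R})=0$, leaving the one-sided elliptic equation
$$\Delta\vec{F}=-d^\star\vec{U}.$$
This is precisely what bypasses the a priori $\dot W^{-1,2}$-only control on $dS_0$ and $d\vec{R}_0$ recorded in \eqref{simm}.

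Next I would control both sides on $B_{3r/4}$. For the source, the pointwise bound $|\vec{\eta}|\lesssim 1$ (from $\vec{\Phi}\in W^{1,\infty}$), combined with $\|D\vec{u}\|_{L^{4/3}(B_{3r/4})}\lesssim \varepsilon E(B_r)$ from \eqref{host}, applied to the explicit expression $\vec{U}=\star(\star\vec{\eta}\overset{\bullet}\wedge_4 d\vec{u}+\star d\vec{u})$ immediately produces $\|\vec{U}\|_{L^{4/3}(B_{3r/4})}\lesssim \varepsilon E(B_r)$. For $\vec{F}$ itself, the $L^2$-bound $\|\vec{F}\|_{L^2(B_{3r/4})}\lesssim E(B_r)$ follows by combining \eqref{ax1}, the boundedness of $\vec{\eta}$, and Proposition \ref{nadaa} applied to the Hodge decomposition of $\vec{Q}$ (so that $\|\vec{B}\|_{L^2}\lesssim \|\vec{Q}\|_{\dot W^{-1,2}}\lesssim E(B_r)$ via \eqref{lives}).

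With both ingredients in hand, I would decompose $\vec{F}=\vec{F}_0+\vec{F}_1$ on $B_{3r/4}$, where $\vec{F}_1$ solves $\Delta\vec{F}_1=-d^\star\vec{U}$ with zero Dirichlet data and $\vec{F}_0$ is harmonic. Proposition \ref{adz} applied componentwise with $p=4/3$ then yields $\|d\vec{F}_1\|_{L^{4/3}(B_{3r/4})}\lesssim \varepsilon E(B_r)$, and the four-dimensional Sobolev embedding $W^{1,4/3}_0\hookrightarrow L^2$ gives $\|\vec{F}_1\|_{L^2(B_{3r/4})}\lesssim \varepsilon E(B_r)$, hence $\|\vec{F}_0\|_{L^2(B_{3r/4})}\lesssim E(B_r)$. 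For $k\in(0,1/2)$, one has $B_{kr}\subset B_{r/2}\Subset B_{3r/4}$, and standard interior estimates for the harmonic $\vec{F}_0$ produce $\|d\vec{F}_0\|_{L^\infty(B_{r/2})}\lesssim r^{-3}\|\vec{F}_0\|_{L^2(B_{3r/4})}$, so $\|d\vec{F}_0\|_{L^{4/3}(B_{kr})}\lesssim (kr)^{3} r^{-3}E(B_r)=k^3 E(B_r)\le kE(B_r)$. Adding the two pieces yields the lemma.

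The main obstacle is conceptual rather than technical: recognising that differentiating \eqref{hags} kills the unknown $d\vec{R}$ and leaves a Poisson equation whose source lies in $L^{4/3}$ with small norm, a manoeuvre that crucially relies on $\vec{B}$ having been chosen coclosed in the Hodge decomposition of $\vec{Q}$ back in Subsection \ref{amena}. The remainder is standard elliptic machinery from Subsection \ref{adzsq}, the only bookkeeping point being that all controls (namely \eqref{ax1}, \eqref{host}, and \eqref{lives}) are available on compatible nested balls $B_{kr}\subset B_{r/2}\Subset B_{3r/4}\Subset B_r$.
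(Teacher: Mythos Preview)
Your proposal follows essentially the same route as the paper: derive $\Delta\vec{F}=-d^\star\vec{U}$ from \eqref{hags}, control $\|\vec{F}\|_{L^2}$ via \eqref{ax1} and Proposition~\ref{nadaa}, control $\|\vec{U}\|_{L^{4/3}}$ via \eqref{host}, and then split $\vec{F}$ into an $\mathcal{L}$-harmonic part and a zero-boundary part handled by Proposition~\ref{adz}. The structure and the key observation (that $d^\star\star d\vec{R}$ vanishes) are identical.

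There is, however, one step that does not go through as written. You invoke ``standard interior estimates for the harmonic $\vec{F}_0$'' to obtain $\|d\vec{F}_0\|_{L^\infty(B_{r/2})}\lesssim r^{-3}\|\vec{F}_0\|_{L^2(B_{3r/4})}$. But $\vec{F}_0$ is not harmonic for the flat Laplacian: it satisfies $\mathcal{L}[\vec{F}_0]=0$ with $\mathcal{L}=\partial_i(|g|^{1/2}g^{ij}\partial_j\,\cdot\,)$, whose coefficients lie only in $W^{2,2}\cap L^\infty\subset VMO\cap L^\infty$ (Claim~\ref{cll}). For divergence-form operators with merely VMO coefficients one has $W^{1,p}_{\mathrm{loc}}$ regularity for all $p<\infty$ (Di~Fazio), hence $C^{0,\alpha}$ solutions, but \emph{not} $C^1$ regularity; an interior $L^\infty$ gradient bound is therefore unavailable at this stage of the argument. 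The paper circumvents this by using only Caccioppoli's inequality (valid for any uniformly elliptic operator) together with H\"older's inequality: $\|d\vec{F}_0\|_{L^{4/3}(B_{kr})}\lesssim kr\,\|d\vec{F}_0\|_{L^2(B_{kr})}\lesssim kr\cdot r^{-1}\|\vec{F}_0\|_{L^2}\lesssim k\,E(B_r)$. Replacing your $L^\infty$ step by this Caccioppoli--H\"older combination fixes the issue and yields the same (indeed slightly weaker, $k$ rather than $k^3$) conclusion, which is all that is needed.
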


\begin{proof}
Observe first that
$$\Delta \vec{F}=-d^\star \vec{U}.$$
To study this equation, note that
\begin{align}
||\vec{G}||_{L^2(B_{r/2})}\quad& \lesssim \quad||\vec{\eta}||_{L^\infty(B_{r/2})} \left(||\vec{R}_0||_{L^2(B_{r/2})} +||S_0||_{L^2(B_{r/2})}   \right)     \nonumber
\\\quad\quad&\overset{ \eqref{ax1}} \lesssim E(B_r) .  \label{gag1}
\end{align}

\noindent
By Proposition \eqref{nadaa}, we have
\begin{align}
||\star\vec{B}||_{L^2(B_{r/2})}\lesssim ||\vec{Q}||_{\dot W^{-1,2}(B_{r/2})} \overset{\eqref{lives}}\lesssim E(B_{r}).  \label{gaga}  \end{align}

\noindent
Combining the estimates \eqref{gag1} and \eqref{gaga}  yield
\begin{align}
||\vec{F}||_{L^2(B_{r/2})}  \lesssim E(B_{r}).  \label{amer1}
\end{align}

\noindent
We split $\vec{F}=\vec{F}_0+\vec{F}_1$   such that

\begin{equation*}
 \begin{cases}
           \mathcal{L}[ \vec{F}_0]=\vec{0} & \mbox{in}\,\, B_{r/2} \\
            \,\,\,\,\, \vec{F}_0= \vec{F} & \mbox{on} \,\, \partial B_{r/2},
       \end{cases} \quad
\quad\quad\mbox{and} \quad\quad\begin{cases}
            \mathcal{L} [\vec{F}_1]=|g|^{1/2}d^\star\vec{U} & \mbox{in}\,\, B_{r/2}\\
            \,\,\,\,\, \vec{F}_1= \vec{0} & \mbox{on} \,\, \partial B_{r/2}.
       \end{cases}
\end{equation*}

\noindent
Elliptic estimates (see Proposition \ref{adz} and recall  $\vec{F}_1$ is a 0-form) yield
\begin{align}
||\vec{F}_1||_{L^2(B_{r/2})}+ ||d\vec{F}_1||_{L^{4/3}(B_{r/2})}  \lesssim ||\vec{U}||_{L^{4/3}(B_{r/2})}.  \label{amer}
\end{align}

\noindent
On the other hand the Jensen and Caccioppoli inequalities give
\begin{align}
||d\vec{F}_0||_{L^{4/3}(B_{kr/2})}& \lesssim kr||d\vec{F}_0||_{L^2(B_{kr/2})}  \nonumber
\\&\lesssim kr||d\vec{F}_0||_{L^2(B_{r/2})}  \nonumber
\\&\lesssim k||\vec{F}_0||_{L^2(B_{r/2})}  \nonumber
\\&\lesssim k||\vec{F}||_{L^2(B_{r/2})} +k||\vec{F}_1||_{L^2(B_{r/2})}   \nonumber
\\&\overset{\eqref{amer}  \eqref{amer1}} \lesssim ||\vec{U}||_{L^{4/3}(B_{r/2})} +kE(B_{r}).   \nonumber
\end{align}

\noindent
Combining the latter with \eqref{amer} yields the estimate
\begin{align}
||d\vec{F}||_{L^{4/3}(B_{kr/2})} & \lesssim ||d\vec{F}_0||_{L^{4/3}(B_{kr/2})} +||d\vec{F}_1||_{L^{4/3}(B_{r/2})} \nonumber
\\&\lesssim ||\vec{U}||_{L^{4/3}(B_{r/2})} +kE(B_{r}).
\end{align}
It remains to observe from \eqref{notational} that
\begin{align}
||\vec{U}||_{L^{4/3}(B_{r/2})}\lesssim ||d\vec{u}||_{L^{4/3}(B_{r/2})}\overset{\eqref{host}} \lesssim \varepsilon E(B_r).   \label{starss}
\end{align}
Relabelling the domains and choosing $0<k<1/2$ rather than $0<k<1$, we obtain
$$||d\vec{F}||_{L^{4/3}(B_{kr})} \lesssim (k+\varepsilon) E(B_r).$$
This completes the proof.

\end{proof}

\begin{cor}\label{takeova}
We have
$$\norm{dS}_{L^{4/3}(B_{kr})} +||d\vec{R}||_{L^{4/3}(B_{kr})} \lesssim (k+\varepsilon) E(B_r) \quad\quad \forall\,\,k\in(0,1/2).$$
\end{cor}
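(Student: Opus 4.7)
The plan is to assemble the bound from two ingredients that are already in place: Lemma \ref{delaa} combined with the identity \eqref{hags} will handle $d\vec{R}$ directly, and then an algebraic identity expressing $dS$ in terms of $d\vec{R}$ and $d\vec{u}$ (obtained by transporting \eqref{rela} from $(S_0,\vec{R}_0,\vec{L})$ to $(S,\vec{R},\vec{L}_1)$) will take care of $dS$.

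First I would handle $d\vec{R}$. The identity \eqref{hags} reads $\star d\vec{R} = d\vec{F} + \vec{U}$, hence
\[
\|d\vec{R}\|_{L^{4/3}(B_{kr})} \le \|d\vec{F}\|_{L^{4/3}(B_{kr})} + \|\vec{U}\|_{L^{4/3}(B_{kr})}.
\]
Lemma \ref{delaa} controls the first summand by $(k+\varepsilon)E(B_r)$. For the second, recall from \eqref{notational} that $\vec{U}=\star(\star\vec{\eta}\overset{\bullet}\wedge_4 d\vec{u}+\star d\vec{u})$, and since $d\vec{\Phi}\in L^\infty$ implies $\vec{\eta}\in L^\infty$, the estimate \eqref{host} (whose reincarnation over $B_{r/2}$ appears as \eqref{starss}) yields $\|\vec{U}\|_{L^{4/3}(B_{kr})}\lesssim\varepsilon E(B_r)$, using $B_{kr}\subset B_{r/2}$ which holds since $k<1/2$. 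Combining produces the claimed estimate for $d\vec{R}$.

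Next I would handle $dS$. The key observation is that the algebraic manipulation that produced \eqref{rela} used only the structural identities $dS_0=\vec{L}\overset{\cdot}\wedge_4 d\vec{\Phi}$ and $d\vec{R}_0=\vec{L}\overset{\wedge}\wedge_4 d\vec{\Phi}+\star d\vec{u}-\vec{J}_0\wedge\star d\vec{\Phi}$, together with the normality of $\vec{J}_0$ (which kills the contraction $\star\vec{\eta}\overset{\cdot}\wedge_4(\vec{J}_0\wedge\star d\vec{\Phi})$). Since by construction $(S,\vec{R},\vec{L}_1)$ satisfies exactly the same structural identities with $\vec{L}$ replaced by $\vec{L}_1$, the identical computation delivers
\[
dS = \star\vec{\eta}\overset{\cdot}\wedge_4 \star d\vec{R} - \star\vec{\eta}\overset{\cdot}\wedge_4 d\vec{u}.
\]
Using $\vec{\eta}\in L^\infty$ together with the $d\vec{R}$ estimate just obtained and the bound \eqref{host} on $d\vec{u}$ gives $\|dS\|_{L^{4/3}(B_{kr})}\lesssim(k+\varepsilon)E(B_r)$, completing the proof.

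There is essentially no genuine obstacle: the statement is a book-keeping consequence of Lemma \ref{delaa} plus the transport of the algebraic identity \eqref{rela} from $(S_0,\vec{R}_0)$ to $(S,\vec{R})$. The only point needing a moment's care is to check that the derivation of \eqref{rela} is purely algebraic in the structural equations satisfied by the defining data, so that replacing $\vec{L}$ by $\vec{L}_1=\vec{L}+d\vec{\ell}$ changes nothing in the conclusion. All other factors ($\vec{\eta}$, $d\vec{u}$, $\vec{J}_0$) are unchanged, and their required estimates have already been established in Subsection \ref{amena}.
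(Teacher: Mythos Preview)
Your proposal is correct and follows essentially the same route as the paper: first bound $d\vec{R}$ via \eqref{hags}, Lemma \ref{delaa}, and the $\vec{U}$ estimate from \eqref{host}, then transport the algebraic identity \eqref{rela} to the pair $(S,\vec{R})$ (since the derivation uses only the structural equations, which $(S,\vec{R},\vec{L}_1)$ also satisfy) to bound $dS$. Your write-up is in fact slightly more explicit than the paper's about why the transport of \eqref{rela} is legitimate.
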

\begin{proof}
It follows immediately from Lemma \ref{delaa}, equation \eqref{hags} and the estimate
\begin{align}
||\vec{U}||_{L^{4/3}(B_{kr})}\lesssim ||d\vec{u}||_{L^{4/3}(B_{kr})}\overset{\eqref{host}} \lesssim \varepsilon E(B_r).   
\end{align}
that
\begin{align}
||d\vec{R}||_{L^{4/3}(B_{kr})}  \lesssim (k+\varepsilon) E(B_r).  \label{messia}
\end{align}

\noindent
Recall that $dS$ and $d\vec{R}_0$ are defined from the modified $\vec{L}+d\vec{\ell}$. Thus $S$ and $\vec{R}$ form an admissible pair and are linked together by the same relation  \eqref{rela}
linking $S_0$ and $\vec{R}_0$, namely
$$dS=\star\vec{\eta}\overset{\cdot}\wedge_4\star d\vec{R} -\star\vec{\eta}\overset{\cdot}\wedge_4 d\vec{u}.$$

\noindent
Calling upon \eqref{messia} and  \eqref{host} gives the desired estimate
$$\norm{dS}_{L^{4/3}(B_{kr})}\lesssim  (k+\varepsilon) E(B_r).$$

\end{proof}
\noindent
The advantage of $S$ and $\vec{R}$ over $S_0$ and $\vec{R}_0$ is that Corollary \ref{takeova} involves $L^{4/3}$ norm rather than $\dot W^{-1,2}$ norm in the estimate \eqref{simm}.

\subsection{The return equation: controlling the geometry} \label{linkage}
We recall the return equation from section \ref{latty1} (precisely \eqref{recca})
$$12d^\star \vec{T}=\star (d\vec{R} \overset{\bullet}\wedge_4 d\vec{\Phi}) +\star (dS \wedge_4 d\vec{\Phi}) +6\star(\star d^\star \vec{v}\overset{ \bullet}\wedge_4 d\vec{\Phi})$$
where 
\begin{align}
\vec{T}:=(\nabla_j\vec{H}+2Hh_{jk}\nabla^k\vec{\Phi}-2H^2\nabla_j\vec{\Phi})dx^j   \label{nameT}
\end{align}
and $\vec{v}$ is as in Remark \ref{rama}. Note first that
\begin{align}
\vec{\norm{T}}_{L^2(B_{kr})} \lesssim \norm{dH}_{L^2(B_{kr})} +\norm{H}_{L^4(B_{kr})} \norm{h}_{L^4(B_{kr)}} \lesssim E(B_{kr})\lesssim E(B_{r}).    \label{jk}
\end{align}
Focusing on \eqref{nameT}, we find
\begin{align}d^\star \vec{T}= \nabla^j \vec{T}_j=\Delta\vec{H} +2\nabla^j(
Hh_{jk}\nabla^k\vec{\Phi}-H^2\nabla_j\vec{\Phi}) . \label{nied}
\end{align}

\noindent
Using \eqref{nied} and introducing Remark \ref{rama}  and Corollary \ref{takeova} into the return equation yields for all $k\in (0,1/2)$ 
\begin{align}
\norm{\Delta H}_{L^{4/3}(B_{kr})}\lesssim ||d^\star \vec{T}||_{L^{4/3}(B_{kr})} +\varepsilon E(B_r) \lesssim (k+\varepsilon) E(B_r).   \label{jk1}
\end{align}

\noindent
On the other hand, a direct computation reveals
$$(d\vec{T})_{ij}=2(h_{ik}\nabla_jH- h_{jk}\nabla_iH)\nabla^k\vec{\Phi} +2\nabla_jH^2\nabla_i\vec{\Phi}-2\nabla_iH^2\nabla_j\vec{\Phi},$$

\noindent
so that
\begin{gather}
||d\vec{T}||_{L^{4/3}(B_{kr})}  \lesssim \norm{h}_{L^4(B_{kr})}  \norm{dH}_{L^2(B_{kr})}   \lesssim \varepsilon E(B_r).   \label{jk2}
\end{gather}

\noindent
Using \eqref{jk}, \eqref{jk1} and \eqref{jk2} into Lemma \ref{tech} yields
\begin{align}
||\vec{T}||_{L^2(B_{kr})} \lesssim (\varepsilon +k)E(B_r). \label{hg1}
\end{align}

\noindent
We now find
\begin{align}
\norm{dH}_{L^2(B_{kr})} &\overset{\eqref{nameT}}\lesssim ||\vec{T}||_{L^2(B_{kr})} +\norm{H}_{L^4(B_{kr})} \norm{h}_{L^4(B_{kr})}       \nonumber
\\&\overset{\eqref{hg1}}\lesssim (\varepsilon+k) E(B_r).  \label{ja1b}
\end{align}

\noindent
By the Sobolev embedding theorem and the Jensen's inequality, it follows that
\begin{align}
\norm{H}_{L^4(B_{kr})} \lesssim \norm{H}_{L^2(B_{kr})} +\norm{dH}_{L^2(B_{kr})}  \overset {\eqref{ja1b}} \lesssim (\varepsilon +k) E(B_r).  \label{233}
\end{align}
 Combining \eqref{ja1b} and \eqref{233}, we have the crucial estimate
$$E(B_{kr})\equiv \norm{dH}_{L^2(B_{kr})}+\norm{H}_{L^4(B_{kr})}  \lesssim (\varepsilon +k)E(B_r).$$

\noindent
Since $\varepsilon$ and $k$ may be chosen as small as we please, by a standard controlled growth argument (cf. Lemma 5.13 in \cite{gia}), we  find the Morrey decay
\begin{align}
E(B_r)\equiv \norm{DH}_{L^2(B_r)} +\norm{H}_{L^4(B_r)} \lesssim E(B_1)r^\beta\quad\quad\forall\,\, r<1.    \label{morrey}
\end{align}
We emphasise that this is true for any $\beta \in (0,1)$.

Consider the maximal function

$$\mathcal{M}_{3-\beta}[f]:= \sup_{\rho>0}\rho^{-1-\beta}\norm{f}_{L^1(B_\rho)}.$$

\noindent
By Jensen's inequality, we have 
$$\mathcal{M}_{3-\beta}[f]\lesssim \sup_{\rho>0} \rho^{-\beta}\norm{f}_{L^{4/3}(B_\rho)}.$$
Using the Morrey decay  \eqref{morrey} and \eqref{jk}, we obtain
\begin{align}
\lVert\mathcal{M}_{3-\beta}[\Delta\vec{H}]\rVert_{L^\infty(B_r)} \lesssim E(B_1)\quad\quad\forall\,\, r<1.  
\end{align}

\noindent
Let $f$ be a locally integrable function on $\mathbb{R}^n$.
Recall that for a number $\alpha$ satisfying $\alpha\in (0, n)$, the {\it Riesz potential $\mathcal{I}_\alpha$ of order $\alpha$ } of $f$ is defined by the convolution
$$(\mathcal{I}_\alpha *f)(x):=\frac{1}{C(\alpha)}\int_{\mathbb{R}^n} f(y)|x-y|^{\alpha-n} dy$$
where $C(\alpha)=\pi^{n/2}2^\alpha\,\, \Gamma(\frac{\alpha}{2})/\Gamma(\frac{n-\alpha}{2})$ is a constant.

 \noindent
We will now use the following result from \cite{ada}.  
\begin{prop}  \label{nbd}
If $\alpha>0$, $0<\lambda\leq n$, $1<p<\lambda/\alpha$, $1\leq q\leq \infty$, and $f\in L^p(\mathbb{R}^n)$ with $\mathcal{M}_{\lambda/p} [f]\in L^q(\Omega)$, $\Omega\subset \mathbb{R}^n$, then
$$\norm{\mathcal{I}_\alpha[f]}_{L^r(\Omega)} \lesssim \norm{M_{\lambda/p }[f]}_{L^q(\Omega)}^{\alpha p/\lambda} \norm{f}_p^{1-\alpha p/\lambda} $$
where $1/r=1/p-\alpha/\lambda+(\alpha p)/(\lambda q)$.
\end{prop}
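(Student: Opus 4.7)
The plan is to establish a Hedberg-type pointwise inequality
$$|(\mathcal{I}_\alpha *f)(x)|\lesssim M[f](x)^{1-\alpha p/\lambda}\,\mathcal{M}_{\lambda/p}[f](x)^{\alpha p/\lambda},$$
where $M$ denotes the Hardy--Littlewood maximal operator, and then to integrate it via H\"older's inequality, finishing with the $L^p$-boundedness of $M$.

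To get the pointwise bound, I would fix $x$ and a scale $\delta>0$ and split the Riesz integral at radius $\delta$. For the near piece $\int_{|x-y|<\delta}$, decompose dyadically over annuli $\{2^{-k-1}\delta\leq |x-y|<2^{-k}\delta\}$ and use the trivial bound $\|f\|_{L^1(B_\rho(x))}\leq C\rho^n M[f](x)$; the resulting geometric series converges because $\alpha>0$ and produces the estimate $\lesssim \delta^\alpha M[f](x)$. For the far piece $\int_{|x-y|\geq \delta}$, decompose dyadically over annuli $\{2^k\delta\leq|x-y|<2^{k+1}\delta\}$ and use instead $\|f\|_{L^1(B_\rho(x))}\leq \rho^{n-\lambda/p}\mathcal{M}_{\lambda/p}[f](x)$, which comes straight from the definition of $\mathcal{M}_{\lambda/p}$; the resulting geometric series converges precisely because $\alpha-\lambda/p<0$, i.e.\ because $p<\lambda/\alpha$, and yields $\lesssim \delta^{\alpha-\lambda/p}\mathcal{M}_{\lambda/p}[f](x)$. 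Balancing these two terms with the choice $\delta\sim (\mathcal{M}_{\lambda/p}[f](x)/M[f](x))^{p/\lambda}$ produces the claimed pointwise inequality.

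For the integrated estimate, I would raise the pointwise bound to the power $r$ and apply H\"older's inequality to the product $M[f]^{r(1-\alpha p/\lambda)}\cdot\mathcal{M}_{\lambda/p}[f]^{r\alpha p/\lambda}$ with conjugate exponents $p/[r(1-\alpha p/\lambda)]$ and $q/[r\alpha p/\lambda]$. Admissibility of this pair amounts to $r(1-\alpha p/\lambda)/p + r\alpha p/(\lambda q)=1$, which rearranges exactly into the stated identity $1/r=1/p-\alpha/\lambda+\alpha p/(\lambda q)$. The Hardy--Littlewood maximal inequality $\|M[f]\|_{L^p(\mathbb{R}^n)}\lesssim \|f\|_{L^p(\mathbb{R}^n)}$ (valid since $p>1$) converts the $M[f]$ factor into $\|f\|_{L^p}$, and the proof concludes.

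The delicate point is choosing which maximal function to use on which side of the splitting. The naive H\"older--$L^p$ bound on the near part would require $p>n/\alpha$, which is false here, while using the fractional maximal function $\mathcal{M}_{\lambda/p}$ on the near part makes the dyadic series diverge since $\lambda/p>\alpha$. The only viable pairing is $M$ on the near side (where $\alpha>0$ saves convergence) and $\mathcal{M}_{\lambda/p}$ on the far side (where $\lambda/p>\alpha$ saves convergence). This asymmetric use of two maximal functions at two scales is precisely what gives Adams's improvement over the classical Hardy--Littlewood--Sobolev range $1/r=1/p-\alpha/n$ and, concretely, makes the extra parameter $q$ appear in the formula.
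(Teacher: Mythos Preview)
The paper does not supply its own proof of this proposition; it is quoted verbatim as a known result from Adams \cite{ada} and used as a black box. Your proposal is a correct reconstruction of the classical Hedberg--Adams argument: the pointwise splitting at a free scale $\delta$, the dyadic estimate of the near part via the Hardy--Littlewood maximal function and of the far part via $\mathcal{M}_{\lambda/p}$, the optimisation in $\delta$, and the final H\"older step with the maximal inequality are all carried out accurately, and the arithmetic matching the exponent $r$ is right. Since the paper offers nothing to compare against, there is no divergence of approach to discuss; your sketch is essentially Adams's original proof.
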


\vskip3mm
\noindent
Putting $\alpha=1$, $q=\infty$, $p=4/3$, $\lambda=4(3-\beta)/3$ in Proposition \ref{nbd}, we find
\begin{align}
\norm{\mathcal{I}_1[\Delta \vec{H}]}_{L^s(B_r)}& \lesssim \norm{\mathcal{M}_{3-\beta}[\Delta\vec{H}]}_{L^{\infty}(B_r)}^{1/(3-\beta)}  \norm{\Delta \vec{H}}_{L^{4/3}(B_r)}^{(2-\beta)/(3-\beta)}   \nonumber
\\&\lesssim E(B_1)^{1-\frac{4}{3s}} E(B_r)^{\frac{4}{3s}}
\end{align}

\noindent
where
$$s:= \frac{4}{3}\left( \frac{3-\beta}{2-\beta} \right)\in \left(2, \frac{8}{3} \right)$$

\noindent
and $\mathcal{I}_1$ denotes the Riesz potential of order 1.

\noindent
We decompose the mean curvature vector  $\vec{H}=\vec{H}_0+\vec{H}_1$ such that
\begin{equation*}
 \begin{cases}
          \Delta \vec{H}_0=\vec{0} & \mbox{in}\,\, B_{1} \\
            \,\,\,\,\, \vec{H}_0= \vec{H} & \mbox{on} \,\, \partial B_{1},
       \end{cases} \quad
\quad\quad\mbox{and} \quad\quad\begin{cases}
            \Delta \vec{H}_1=\mathcal{I}_1[\Delta \vec{H}] & \mbox{in}\,\, B_{1}\\
            \,\,\,\,\, \vec{H}_1= \vec{0} & \mbox{on} \,\, \partial B_{1}.
       \end{cases}
\end{equation*}

\noindent
Let now $s\in\left(2,\frac{8}{3} \right)$. Using standard elliptic estimates and the Sobolev embedding theorem, we have for all $r<1$
\begin{align}
\|D\vec{H}\|_{L^s(B_r)}& \lesssim \|D\vec{H}_0  \|_{L^s(B_r)} +\|D\vec{H}_1  \|_{L^s(B_r)}  \nonumber
\\&\lesssim r^{\frac{4}{s}-2} \|D\vec{H}_0  \|_{L^2(B_1)} +\|D\vec{H}_1 \|_{L^s(B_r)}  \nonumber
\\&\lesssim  r^{\frac{4}{s}-2}\|D\vec{H}\|_{L^2(B_1)}+  \|\mathcal{I}_1[\Delta\vec{H}]\|_{L^s(B_r)}\nonumber
\\&  \lesssim r^{\frac{4}{s}-2}E(B_1) +E(B_1)^{1-\frac{4}{3s}} E(B_r)^{\frac{4}{3s}} \nonumber
\\&\lesssim (r^{\frac{4}{s}-2} +1)E(B_1)  .  \label{gha}
\end{align}

\noindent
As \eqref{gha} holds for all $s\in \left(2,\frac{8}{3}   \right)$, we see that the integrability of $D\vec{H}$ has been improved.

\vskip 3mm
\noindent
With this new information on the integrability of $d\vec{H}$, the above procedure may be repeated untill we obtain that $\vec{H}$ is Lipschitz. Once this is known, we see that $\Delta \vec{\Phi}$ is as well Lipschitz. Since $|g|^{1/2}\in W^{1,4}$ by hypothesis, we see that $|g|^{1/2}\Delta \vec{\Phi}$ lies in $W^{1,4}$. We have
$$\mathcal{L}[\vec{\Phi}]=|g|^{1/2}\Delta\vec{\Phi}.$$
Calling upon Theorem 1.1 in \cite{gru}, the Green kernel $\mathcal{G}$ of $\mathcal{L}$ satisfies $D\mathcal{G}\in L^{{4/3},\infty}$ where $L^{{4/3},\infty}$ is the \footnote{The weak-$L^{4/3}$ Marcinkiewicz space $L^{{4/3},\infty} (B_1(0))$ is defined as those functions $f$ which satisfy $\sup_{\alpha >0} \alpha^{4/3}|\{x\in B_1(0): |f(x)|\geq \alpha \}|<\infty$ . The space $L^{{4/3},\infty}$ is also a Lorentz space.}weak Marcinkiewicz space.

\noindent
Formally, the solution $\vec{\Phi}$
is given by the convolution
$$\vec{\Phi}=\mathcal{G}* \mathcal{L}[\vec{\Phi}].$$

\noindent
Hence by the convolution rule for Lorentz spaces (cf. \cite{hunt})
$$D^2\vec{\Phi}= D\mathcal{G}*D\mathcal{L}[\vec{\Phi}]\in L^{{4/3},\infty}*L^4\subset L^p\quad\quad\forall\,\, p<\infty.$$

\noindent
Accordingly, by the Sobolev embedding theorem, $D\vec{\Phi}\in \bigcap_{p<\infty}W^{1,p} \subset \bigcap_{\alpha<1} C^{0,\alpha}$. The regularity of $\vec{\Phi}$ has thus also improved. In particular, the metric coefficients are H\"{o}lder continuous of all orders, and it follows that the standard analysis of second-order uniformly elliptic  operators is now at hand (cf. \cite{gil}). Eventually, by standard elliptic arguments, we reach the conclusion that $\vec{\Phi}$ is smooth. 

\vskip 3mm
Finally, by standard elliptic arguments we obtain that
\begin{align}
\norm{DH}_{L^s(B_r)} +\frac{1}{r} \norm{H}_{L^s(B_r)}  \lesssim \frac{1}{r^2} \left(\norm{DH}_{L^2(B_1)} +\norm{H}_{L^4(B_1)}   \right)\quad\forall \, r<1, s>2.
\end{align}
The result then follows as $s\nearrow \infty$. This concludes the proof.

\appendix

\section{Appendix}
\subsection{Auxiliary computations } \label{grace5}
\noindent
Let $\vec{A}$ and  $\vec{B}$ be 2-vectors. From the definition of the first-order contraction operator (see page \pageref{se123} for definition), we have  the following multiplication rule.
\begin{align}
\vec{A}\cdot \vec{B}&:= (\vec{A}_1\wedge\vec{A}_2) \cdot (\vec{B}_1\wedge\vec{B}_2)   \nonumber
\\&= (\vec{A}_1\cdot\vec{B}_1)(\vec{A}_2\cdot\vec{B}_2) -(\vec{A}_1\cdot\vec{B}_2)(\vec{A}_2\cdot\vec{B}_1) , \nonumber
\end{align}
\begin{align}
\vec{A}\bullet \vec{B}&:=(\vec{A}_1\wedge\vec{A}_2) \bullet (\vec{B}_1\wedge\vec{B}_2)   \nonumber
\\&=(\vec{A}_2\cdot \vec{B}_2)\vec{A}_1\wedge \vec{B}_1- (\vec{A}_2\cdot \vec{B}_1)\vec{A}_1\wedge \vec{B}_2-(\vec{A}_1\cdot \vec{B}_2)\vec{A}_2\wedge \vec{B}_1\nonumber
\\&\quad + (\vec{A}_1\cdot \vec{B}_1)\vec{A}_2\wedge \vec{B}_4.\nonumber
\end{align}

\noindent
With the rules above, we are ready for the following computation. We know that $S_0$ and $\vec{R}_0$ satisfy

$$dS_0=\vec{L}\overset{\cdot} \wedge_4 d\vec{\Phi}\quad\quad\mbox{and}\quad\quad d\vec{R}_0=\vec{L}\overset{\wedge}\wedge_4 d\vec{\Phi}+\star d\vec{u} -\vec{J}_0\wedge \star d\vec{\Phi} .$$

\noindent
We compute
\begin{align}
&\quad\star \vec{\eta}\overset{\bullet}\wedge_4 \star\left(d\vec{R}_0-\star d\vec{u} +\vec{J}_0\wedge\star d\vec{\Phi}  \right)  \nonumber
\\&=\star\vec{\eta}\overset{\bullet}\wedge_4 \star(\vec{L}\overset{\wedge}\wedge_4 d\vec{\Phi})  \nonumber
\\&=    \frac{1}{3!} (\star\vec{\eta})_{[ab}\bullet (\star(\vec{L}\overset{\wedge}\wedge_4 d\vec{\Phi}))_{r]} \,\, dx^{a}\wedge_4 dx^b\wedge_4 dx^r      \nonumber
\\&= \frac{1}{3!}\frac{1}{6}\delta_{abr}^{\alpha\beta\gamma}\left (\epsilon_{\alpha\beta pq}\frac{1}{2!}\nabla^p\vec{\Phi}\wedge\nabla^q\vec{\Phi} \right) \bullet \left( \epsilon_{\gamma ijk} \frac{1}{3!} \vec{L}^{ij}\wedge \nabla^k \vec{\Phi} \right) \,\, dx^{a}\wedge_4 dx^b\wedge_4 dx^r   \nonumber
\\&=\frac{1}{216}\delta^{\alpha\beta\gamma}_{abr}\epsilon_{\alpha\beta pq}\epsilon_{\gamma ijk}\left( -g^{qk} \vec{L}^{ij}\wedge\nabla^p\vec{\Phi} -(\vec{L}^{ij}\cdot\nabla^{p}\vec{\Phi})\nabla^k\vec{\Phi}\wedge\nabla^q\vec{\Phi}     \right) \,\,   dx^{a}\wedge_4 dx^b\wedge_4 dx^r    \nonumber
\\&:= \vec{A} -\frac{1}{126}\delta^{\alpha\beta\gamma}_{abr}\epsilon_{\alpha\beta pq}\epsilon_{\gamma ijk} \vec{B}^{ijpk}\wedge\nabla^q\vec{\Phi}\,\, dx^{a}\wedge_4 dx^b\wedge_4 dx^r. \label{derc}
\end{align}

\noindent
We have for the first term of \eqref{derc} 
\begin{align}
\vec{A}&= -\frac{1}{216} \delta^{\alpha\beta\gamma}_{abr} \epsilon_{\alpha\beta pq}\tensor{\epsilon}{_{\gamma ij}^{q}}\vec{L}^{ij}\wedge_4 \nabla^p\vec{\Phi}\,\, dx^{a}\wedge_4 dx^b\wedge_4 dx^r    \nonumber
\\&=-\frac{1}{216} \delta^{\alpha\beta\gamma}_{abr}\epsilon_{\alpha\beta pq}\epsilon^{cijq}g_{c\gamma}\vec{L}_{ij}\wedge \nabla^p\vec{\Phi} \,\, dx^{a}\wedge_4 dx^b\wedge_4 dx^r     \nonumber
\\&= -\frac{1}{216}\delta^{\alpha\beta\gamma}_{abr} \delta_{\alpha\beta pq}^{cijq}g_{c\gamma} \vec{L}_{ij}\wedge \nabla^p\vec{\Phi} \,\, dx^{a}\wedge_4 dx^b\wedge_4 dx^r   \nonumber
\\&= -\frac{1}{12} \vec{L}_{[ab}\wedge \nabla_{r]}\vec{\Phi}\,\, dx^{a}\wedge_4 dx^b\wedge_4 dx^r   \nonumber
\\&=-\vec{L}\overset{\wedge} \wedge_4 d\vec{\Phi}.  \label{plpl}
\end{align}

\noindent
Next is to compute the second term of \eqref{derc}. Note that we have set $\vec{B}^{ijkp}:= (\vec{L}^{ij}\cdot\nabla^p\vec{\Phi})\nabla^k\vec{\Phi}$ where $\vec{B}$ is antisymmetric in indices $(i,j,k)$.  
\\
We have
\begin{align}
&\quad\frac{1}{216}\delta^{\alpha\beta\gamma}_{abr} \epsilon_{\alpha\beta pq}\epsilon_{\gamma ijk} \vec{B}^{ijpk}\wedge\nabla^q\vec{\Phi}\,\, dx^{a}\wedge_4 dx^b\wedge_4 dx^r   \nonumber
\\&=  \frac{1}{216}\delta^{\alpha\beta \gamma}_{abr}\epsilon^{lm pq}g_{l \alpha}g_{m \beta}\epsilon_{\gamma ijk} \tensor {\vec{B}}{^{ij}_{p}^k}\wedge \nabla_q\vec{\Phi}   \,\, dx^{a}\wedge_4 dx^b\wedge_4 dx^r    \nonumber
\\&=\frac{1}{216}\delta^{\alpha\beta\gamma}_{abr}\delta^{lm pq}_{\gamma ijk}g_{l \alpha}g_{m \beta} \tensor {\vec{B}}{^{ij}_{p}^k} \wedge\nabla_q\vec{\Phi}  \,\, dx^{a}\wedge_4 dx^b\wedge_4 dx^r   \nonumber
\\&=\frac{1}{216}  \delta^{\alpha\beta\gamma}_{abr}   \left( \delta^p_\gamma \delta^{lm q}_{ijk} -\delta^q_\gamma \delta^{lm p}_{ijk} \right)      g_{l \alpha}g_{m \beta} \tensor {\vec{B}}{^{ij}_{p}^k} \wedge\nabla_q\vec{\Phi}  \,\,                     dx^{a}\wedge_4 dx^b\wedge_4 dx^r   \nonumber
\\&= \frac{1}{216}\delta^{\alpha\beta\gamma}_{abr} \left[\delta^p_\gamma\left(\delta^l_i \delta^{m q}_{jk} -\delta^l_j \delta^{m q}_{ik} + \delta^l_k \delta^{m q}_{ij} \right)  -\delta^q_\gamma \left( \delta^l_i \delta^{m p}_{jk} -\delta^l_j \delta^{m p}_{ik} \right.\right.  \nonumber
\\&\left.\left.\quad\quad\quad\quad\quad\quad
+\delta^l_k \delta^{m p}_{ij}  \right)\right]   g_{l \alpha}g_{m \beta} \tensor {\vec{B}}{^{ij}_{p}^k} \wedge\nabla_q\vec{\Phi}  \,\,                     dx^{a}\wedge_4 dx^b\wedge_4 dx^r    \nonumber
\\&=  \frac{1}{216}\delta^{\alpha\beta\gamma}_{abr}\left[2\delta^p_\gamma \delta^l_i\delta^{m q}_{jk}+ \delta^p_\gamma \delta^l_k\delta^{m q}_{ij} -2\delta^q_\gamma \delta^l_i\delta^{m p}_{jk}-\delta^q_\gamma \delta^l_k\delta^{m p}_{ij}\right]     g_{l \alpha}g_{m \beta} \tensor {\vec{B}}{^{ij}_{p}^k} \wedge\nabla_q\vec{\Phi}  \,\,                     dx^{a}\wedge_4 dx^b\wedge_4 dx^r   \nonumber
\\&=  \frac{1}{216}\delta^{\alpha\beta\gamma}_{abr}\left[2\delta^p_\gamma \delta^l_i \delta^m_j \delta^q_k -  2\delta^p_\gamma \delta^l_i \delta^m_k \delta^q_j +\delta^p_\gamma \delta^l_k \delta^m_i \delta^q_j -\delta^p_\gamma \delta^l_k \delta^m_j \delta^q_i \right]           g_{l \alpha}g_{m \beta} \tensor {\vec{B}}{^{ij}_{p}^k} \wedge\nabla_q\vec{\Phi}  \,\,                   dx^{a}\wedge_4 dx^b\wedge_4 dx^r   \nonumber
\\&\quad+\frac{1}{216}\left[-2\delta^q_\gamma \delta^l_i \delta^m_j \delta^p_k +  2\delta^q_\gamma \delta^l_i \delta^m_k \delta^p_j -\delta^q_\gamma \delta^l_k \delta^m_i \delta^p_j +\delta^q_\gamma \delta^l_k \delta^m_j \delta^p_i \right]           g_{l \alpha}g_{m \beta} \tensor {\vec{B}}{^{ij}_{p}^k} \wedge\nabla_q\vec{\Phi}  \,\,                    dx^{a}\wedge_4 dx^b\wedge_4 dx^r         \nonumber
\\&=   \frac{1}{216}\delta^{\alpha\beta\gamma}_{abr}\left[2\delta^q_\gamma\delta^l_k \delta^m_j \delta^p_i  -2\delta^q_\gamma\delta^l_i \delta^m_j \delta^p_k  +2\delta^q_\gamma\delta^l_i \delta^m_k \delta^p_j   \right]                         g_{l \alpha}g_{m \beta} \tensor {\vec{B}}{^{ij}_{p}^k} \wedge\nabla_q\vec{\Phi}  \,\,                     dx^{a}\wedge_4 dx^b\wedge_4 dx^r          \nonumber
\\&= \frac{1}{36} \delta^{\alpha\beta\gamma}_{abr} \delta^q_\gamma\delta^l_i \delta^m_j \delta^p_k      g_{l \alpha}g_{m \beta} \tensor {\vec{B}}{^{ij}_{p}^k} \wedge\nabla_q\vec{\Phi}  \,\,                     dx^{a}\wedge_4 dx^b\wedge_4 dx^r      \nonumber
\\&= \frac{1}{36} \delta^{\alpha\beta\gamma}_{abr} \delta^k_p\,\,\,\,  \tensor {\vec{B}}{_{\alpha\beta}^{p}_k} \wedge\nabla_\gamma\vec{\Phi}  \,\,                     dx^{a}\wedge_4 dx^b\wedge_4 dx^r   . \label{plpl1}
\end{align}
where $\tensor {\vec{B}}{_{\alpha\beta}^{p}_k} $ is antisymmetric in the indices $\alpha,\beta$ and $k$.

\noindent
Also, we have
\begin{align}
\star\vec{\eta} \overset{\bullet}\wedge_4 \star(\vec{J}_0\wedge \star d\vec{\Phi})&=-\frac{1}{3!2!}\frac{1}{6}\delta_{abr}^{\alpha\beta\gamma}\epsilon_{\alpha\beta pq} (\nabla^p\vec{\Phi}\wedge \nabla^q\vec{\Phi}) \bullet (\vec{J}_0\wedge\nabla_\gamma\vec{\Phi})\,\, \,\,                     dx^{a}\wedge_4 dx^b\wedge_4 dx^r  \nonumber
\\&=  \frac{1}{36}\delta^{\alpha\beta\gamma}_{abr}\epsilon_{\alpha\beta pq} \delta^{q}_\gamma \nabla_p\vec{\Phi} \wedge\vec{J}_0 \,\, dx^{a}\wedge_4 dx^b\wedge_4 dx^r \nonumber
\\&= -\vec{J}_0\wedge_4\star d\vec{\Phi} . \label{plpl2}
\end{align}

\noindent
Using \eqref{plpl}, \eqref{plpl1} and \eqref{plpl2} in \eqref{derc}, we have
$$-d\vec{R}_0=\star\vec{\eta}\overset{\bullet}\wedge_4 \star d\vec{R}_0 +\star d\vec{u}+ \star\vec{\eta}\overset{\bullet}\wedge_4 d\vec{u} + \frac{1}{36} \delta^{\alpha\beta\gamma}_{abr} \delta^k_p\,\,\,\,  \tensor {\vec{B}}{_{\alpha\beta}^{p}_k} \wedge\nabla_\gamma\vec{\Phi}  \,\,                     dx^{a}\wedge_4 dx^b\wedge_4 dx^r     .   $$
\noindent
On the other hand, we have

\begin{align}
\star \vec{\eta} \wedge_4 \star dS&= \frac{1}{432} \delta^{\alpha\beta\gamma}_{abr}\epsilon_{\alpha\beta pq}\epsilon_{\gamma ijk} (\vec{L}^{ij}\cdot\nabla^k\vec{\Phi})(\nabla^p\vec{\Phi}\wedge \nabla^q\vec{\Phi})\,\, \,\,                     dx^{a}\wedge_4 dx^b\wedge_4 dx^r          \nonumber
\\&= \frac{1}{432} \delta^{\alpha\beta\gamma}_{abr}\delta^{l    m    pq}_{\gamma ijk} g_{l     \alpha}g_{m    \beta}  \tensor {\vec{B}}{^{ijk}_{p}} \wedge\nabla_q\vec{\Phi}  \,\,                   \,\,                     dx^{a}\wedge_4 dx^b\wedge_4 dx^r   \nonumber
\\&=   \frac{1}{432}\delta^{\alpha\beta\gamma}_{abr} \left(\delta^p_\gamma \delta^{l    m    q}_{ijk}-\delta^q_\alpha \delta^{l    m    p}_{ijk}  \right) g_{l     \alpha}g_{m    \beta}  \tensor {\vec{B}}{^{ijk}_{p}} \wedge\nabla_q\vec{\Phi}  \,\,                    \,\,                     dx^{a}\wedge_4 dx^b\wedge_4 dx^r     \nonumber
\\&= \frac{1}{72}\delta^{\alpha\beta\gamma}_{abr} \left( g_{i \alpha}g_{j \beta}  \tensor {\vec{B}}{^{ijk}_{\gamma}} \wedge\nabla_k\vec{\Phi} -  g_{i \alpha}g_{j \beta}\tensor {\vec{B}}{^{ijk}_{k}} \wedge\nabla_\gamma\vec{\Phi} \right)\,\,                     \,\,                     dx^{a}\wedge_4 dx^b\wedge_4 dx^r     \nonumber
\\&=-\frac{1}{36}\delta^{\alpha\beta\gamma}_{abr} g_{i \alpha}g_{j \beta} \tensor {\vec{B}}{^{ijk}_{k}} \wedge\nabla_\gamma\vec{\Phi} \,\,                     \,\,                     dx^{a}\wedge_4 dx^b\wedge_4 dx^r  \nonumber
\\&=-\frac{1}{36}\delta^{\alpha\beta\gamma}_{abr} \delta^k_p \tensor {\vec{B}}{_{\alpha\beta k}^{p}} \wedge\nabla_\gamma \vec{\Phi} \,\,                     \,\,                     dx^{a}\wedge_4 dx^b\wedge_4 dx^r   \nonumber
\end{align}
where $ \tensor {\vec{B}}{_{\alpha\beta k}^{p}}$ is antisymmetric in the indices $\alpha, \beta$ and $k$.

\noindent
Thus we arrive at
$$d\vec{R}_0=\star\vec{\eta}\overset{\bullet}\wedge_4 \star d\vec{R}_0 -\star d\vec{u}- \star\vec{\eta}\overset{\bullet}\wedge_4 d\vec{u} +\star\vec{\eta}\wedge_4\star dS_0\,\,   +\vec{Q}                    $$

\noindent
where
$$\vec{Q}:= \frac{1}{12}\delta_{abr}^{\alpha\beta\gamma}( \tensor{\vec{B}}{_{k\alpha}^k_\beta}-\tensor{\vec{B}}{_{k\alpha\beta}^k})\wedge\nabla_\gamma \vec{\Phi}      \,\,                     dx^{a}\wedge_4 dx^b\wedge_4 dx^r. $$

\subsection{Proof of Claims \ref{cll} and \ref{claimclaim}}

\textbf{Claim} \ref{cll}.
{\it The coefficients $a^{ij}$ belong to the space $W^{2,2}\cap L^\infty$. }

\begin{proof}
First, we show that $W^{2,2}\cap L^\infty$ is an algebra under pointwise multiplication. Let $\alpha,\beta \in W^{2,2}\cap L^\infty$. Clearly, $\alpha \beta \in L^\infty$. It remains to show that $\alpha\beta \in W^{2,2}$. By Liebnitz rule we write
\begin{align}
 D ^2(\alpha \beta)=  \alpha D ^2 \beta+ \beta D ^2 \alpha+ 2 D  \alpha D  \beta . \nonumber
\end{align}
Now, $\alpha\in L^\infty,  D ^2 \beta\in L^2$ implies that $\alpha D ^2 \beta\in L^2$. By the same token $\beta D ^2\alpha\in L^2$. By Sobolev embedding, the functions $ D  \alpha, D  \beta\in W^{1,2}\subset L^4$ so that the product $ D  \alpha D  \beta\in L^2$. We arrive at $\alpha\beta\in W^{2,2}$. Thus $\alpha\beta\in W^{2,2}\cap L^\infty$.

\noindent
Next, using the hypothesis $\vec{\Phi}\in W^{3,2}\cap W^{1,\infty}$ and the fact that $W^{2,2}\cap L^\infty$ is an algebra under pointwise multiplication, we have
$g^{ij}:=\nabla^i\vec{\Phi}\cdot \nabla^j\vec{\Phi}\in W^{2,2}\cap L^\infty$. Hence $a^{ij}\in W^{2,2}\cap L^\infty$.
\end{proof}

\noindent
\textnormal{ \textbf{Claim} \ref{claimclaim}.}
 Let $\alpha$ be a function in $(L^\infty \cap W^{2,2})$. Let $\beta$ be a function in  $\dot{W}^{-1,2}$. Then the product $\alpha\beta$ belongs to $  \dot{W}^{-1,2} \oplus L^{4/3} \subset \dot{W}^{-1,2}$.

\begin{proof}
Since  $\beta \in \dot{W}^{-1,2}$, there exists $\gamma\in L^2$ such that $ D  \gamma=\beta$. We have
\begin{align}
\alpha\beta=  D (\alpha\gamma)-\gamma  D  \alpha.  \nonumber
\end{align}
Now, $\alpha\in L^\infty$ and $\gamma\in L^2$ implies that $ D (\alpha\gamma)\in \dot{W}^{-1,2}$. By the Sobolev embedding theorem, $\alpha\in W^{2,2}\subset W^{1,4}$ so that $ D  \alpha\in L^4$ and the product $\gamma D \alpha \in L^{4/3}$. By the dual of the Sobolev embedding theorem, $\alpha\beta\in  \dot{W}^{-1,2}\oplus L^{4/3}  \subset \dot{W}^{-1,2}.$
\end{proof}

\end{document}